\documentclass[11pt,amsmath]{article}

\usepackage{amsmath} 
\usepackage{amssymb}
\usepackage{amsfonts}
\usepackage{latexsym}
\usepackage{geometry}
\usepackage{graphicx}
\usepackage{subfigure}
\usepackage{cases}
\usepackage{enumitem}
\usepackage{titling}
\usepackage{bbm}
\usepackage{verbatim}
\usepackage[hang,flushmargin]{footmisc}
\allowdisplaybreaks
\usepackage[colorlinks=true,urlcolor=blue, citecolor=red,linkcolor=blue,
linktocpage,pdfpagelabels, bookmarksnumbered,bookmarksopen]
{hyperref}
 \usepackage{hyperref}
\usepackage[numbers,sort&compress]{natbib}

\usepackage{authblk}
\numberwithin{equation}{section}

\usepackage{amsmath, amsthm, amssymb, graphicx} 
\newtheorem{thm}{Theorem}[section]
\newtheorem{lem}[thm]{Lemma}

\newcommand{\HH}{\mathbb H}
\newcommand{\RR}{\mathbb R}
\newcommand{\lapH}{\Delta_{\HH^N}}
\newcommand{\lzero}{\lambda_0}

\usepackage{lipsum}

\title{Qualitative analysis of positive radial singular solutions on hyperbolic space}

\author{
Xia Huang\footnote{xhuang@cpde.ecnu.edu.cn},\ Yahui Jiang\footnote{52275500028@stu.ecnu.edu.cn}\ and Chunyi Zhao\footnote{Corresponding author, cyzhao@math.ecnu.edu.cn}\\
{\small School of Mathematical Sciences, Key Laboratory of MEA(Ministry of Education), \\ Shanghai Key Laboratory of PMMP, East China Normal University, Shanghai, China}
}

\date{}

\begin{document}
\maketitle

\begin{abstract}
We investigate positive radial solutions with an isolated nonremovable singularity for the semilinear elliptic equation
\begin{align*}
\lapH u+\lambda u+u^p=0
\qquad\text{in }\HH^N\setminus\{Q\},
\end{align*}
where $N\geq 3$, $p>1$, $\lambda\le \frac{(N-1)^2}{4}$, and $Q\in\HH^N$ is a prescribed pole.
Our purpose is to describe how the locally Euclidean singular behavior near ${\{Q}\}$ interacts with the genuinely hyperbolic dynamics at infinity, and how this interaction changes across the Serrin and Sobolev critical exponents.

For $\frac{N}{N-2}\le p<\frac{N+2}{N-2}$, we construct a family of positive radial singular solutions selecting the fast exponential mode at infinity. At the pole, these solutions exhibit the logarithmically corrected fundamental-solution profile when $p=\frac{N}{N-2}$, and the standard power-law profile when $\frac{N}{N-2}<p<\frac{N+2}{N-2}.$

 At the conformally invariant pair
$(p,\lambda)=(\frac{N+2}{N-2},\frac{N(N-2)}{4})$, we obtain an explicit singular solution and nonconstant Fowler-type solutions, and we classify all positive radial solutions with a nonremovable singularity via the positive periodic orbits of an associated autonomous equation.
In the supercritical regime $p>\frac{N+2}{N-2}$ with $\lambda\le\frac{ N(N-2)}{4}$, we prove existence and uniqueness of the global positive radial singular solution, derive its two-term local asymptotic expansion near the pole, and establish a sharp trichotomy for its behavior at infinity.
\end{abstract}

\medskip
\noindent\textbf{Keywords.}
Hyperbolic space; semilinear elliptic equation; positive radial solution; isolated singularity; asymptotic behavior; Fowler solution.

\smallskip
\noindent\textbf{2020 Mathematics Subject Classification.}
35J15, 35J61, 35A21, 35B40, 58J05.

\section{Introduction}

Let $\HH^N$ denote the $N$-dimensional hyperbolic space with $N\ge3$.  We consider the semilinear elliptic equation
\begin{equation}
\lapH u+\lambda u+u^p=0
\qquad\text{in }\HH^N\setminus\{Q\},
\label{eq1}
\end{equation}
where $p>1$, $\lambda\le (N-1)^2/4$, and $Q\in\HH^N$ is a prescribed point.
The point $Q$ is not a geometric singularity of the ambient space; but rather the location at which the solution is allowed to develop an isolated nonremovable singularity.

Semilinear elliptic equations on curved manifolds arise naturally at the interface of nonlinear analysis and conformal geometry, with the Yamabe equation providing one of the most prominent examples. In Euclidean space, the theory of isolated singularities for Lane--Emden and conformally invariant equations has been extensively developed and is by now rather well understood. Hyperbolic space introduces an additional feature that has no direct Euclidean counterpart: although the geometry is asymptotically Euclidean at an isolated pole, its negative curvature strongly affects the spectral structure of the Laplace--Beltrami operator and the behavior of solutions at infinity. Consequently, a singular solution on $\HH^N$ is governed by two different mechanisms: its leading behavior near the pole is essentially Euclidean, whereas its far-field behavior is genuinely hyperbolic. Understanding how these two mechanisms are matched globally is one of the main motivations of the present work.

\medskip

Since the isometry group of $\HH^N$ acts transitively, we may fix
$Q=(0,\ldots,0,1)$ in the hyperbolic model without loss of generality. For a solution $u=u(r)$ that is radial about $Q$, where $r=d_{\HH^N}(x,Q)$ denotes the hyperbolic distance from $x$ to $Q$, then \eqref{eq1} reduces to
\begin{equation}
u''+(N-1)\coth r\,u'+\lambda u+u^p=0,
\qquad r>0.
\label{eq:radial}
\end{equation}
The coefficient $\coth r$ reflects precisely the two geometric scales involved:
\begin{align*}
\coth r\sim \frac1r \quad (r\to0^+),
\qquad
\coth r\to1 \quad (r\to+\infty).
\end{align*}
Thus, near $r=0$, equation \eqref{eq:radial} is asymptotically the radial Lane-Emden equation in $\mathbb{R}^N$, while at infinity it approaches the autonomous equation
\begin{align*}
u''+(N-1) u'+\lambda u+u^p=0.
\end{align*}
This local to global transition is responsible for the different regimes described below.

The quantity $\lzero^2=\frac{(N-1)^2}{4}$ is the bottom of the spectrum of $-\lapH$. For $\lambda\le\lzero^2$, we define
\begin{equation}
\lzero=\frac{N-1}{2},
\qquad
\mu=\lzero-\sqrt{\lzero^2-\lambda},
\qquad
\nu=\lzero+\sqrt{\lzero^2-\lambda}.
\label{eq:indicial-roots}
\end{equation}
Then $\mu$ and $\nu$ are the roots of the characteristic equation
\begin{equation}
a^2+(1-N)a+\lambda=0.
\label{eigen}
\end{equation}
 Let $u$ be a positive solution of \eqref{eq:radial} that tends to zero at infinity. We say that $u$ decays rapidly if $e^{\lzero r}u(r)$ has a finite limit (possibly zero) as $r\to+\infty$; and decays slowly if $e^{\lzero r}u(r)\to+\infty$ as $r\to+\infty$. For the limiting linearized equation at infinity, the fast exponential mode is $e^{-\nu r}$, while the slow asymptotic behavior is determined by $e^{-\mu r}$. A central objective of this work is to identify the precise asymptotic behavior selected by  positive singular solutions in each parameter regime. 

\medskip

 Two exponents play a fundamental role, namely, the Serrin exponent $p_{\mathrm S}=\frac{N}{N-2}$ and the Sobolev critical exponent $ p_{\mathrm{Sob}}=\frac{N+2}{N-2}$. The local theory of isolated singularities for the corresponding Euclidean Lane-Emden equation has a long history. For $1<p<p_{\mathrm S}$, Lions \cite{L4} proved that any nonremovable singularity of a positive solution is of fundamental-solution type, with leading order $r^{2-N}$. At the Serrin exponent $p=p_{\mathrm S}$, Aviles \cite{A} showed that a nonnegative solution near an isolated singularity is either removable or exhibits a logarithmically corrected profile $r^{2-N}(-\ln r)^{-(N-2)/2}$. For $p_{\mathrm S}<p<p_{\mathrm{Sob}}$, Gidas and Spruck \cite{GS} established a complete classification, showing that a nonnegative solution is either regular or has the standard power-type singular behavior of order $r^{-2/(p-1)}$.

\medskip

Beyond local analysis, the Euclidean problem exhibits a different existence pattern, in particular, global positive radial singular solutions do not exist for $p\leq p_{\mathrm S}$. A systematic study of existence, uniqueness and asymptotic behavior in the Euclidean setting was carried out by Serrin and Zou \cite{SZ}

\medskip

At the Sobolev critical exponent, isolated singularities are deeply connected to conformal invariance.
The classical Yamabe equation 
\[
-\Delta u=u^{\frac{N+2}{N-2}}
\qquad\text{in }\RR^N\setminus\{0\}
\]
is a basic example. Caffarelli, Gidas, and Spruck \cite{CGS} proved the radial symmetry and complete classification of positive solutions with isolated singularities. Under the Emden--Fowler transformation, the problem reduces to an autonomous Hamiltonian system, its positive constant equilibrium gives the explicit singular solution, whereas its nonconstant periodic trajectories generate the classical Fowler (Delaunay-type) solutions. Related periodic singular structures have subsequently appeared in several conformally invariant elliptic equations, both local and nonlocal; see \cite{DPGW,GHWW,FK,JX,HLW}.

\medskip

Semilinear equations on hyperbolic space have also been studied from several viewpoints, but the global theory of singular solutions is less complete. Mancini and Sandeep \cite{MS} obtained existence, nonexistence and uniqueness results for positive finite-energy solutions in the range $1<p\le p_{\mathrm{Sob}}$, depending on the value of $\lambda$ and the dimension. Bandle and Kabeya \cite{BK,BK1} established existence and nonexistence results for both regular and singular radial solutions using perturbation arguments, variational methods, Pohozaev identities, and reductions to Matukuma-type equations. However, their work focuses primarily on existence criteria and does not provide sharp asymptotic descriptions of singular solutions, nor does it address the full classification of singularities at the critical exponent.

\medskip

For $\lambda=0$, Bonforte, Gazzola, Grillo, and V\'azquez \cite{BGGV} gave a detailed classification of radial solutions to the Emden-Fowler equation on hyperbolic space, including both finite- and infinite-energy solutions and their asymptotic behavior. At the conformally invariant pair $(p,\lambda)=(p_{\mathrm{Sob}},\frac{N(N-2)}{4})$, Li, Lu, and Wang \cite{LLW} established radial symmetry and explicit classification of positive solutions in $W_0^{1,2}(\HH^N)$, and further extended these results to GJMS equations with general nonlinearities.

\medskip

In the supercritical regime, Wu, Chen, Chern, and Kabeya \cite{WCCK} proved existence and uniqueness of a positive radial singular solution for the scalar-field equation in a high-dimensional, large-exponent regime above the Joseph--Lundgren threshold. For $\lambda=0$, He \cite{H1} showed that the standard power-type singular solution can be approximated by smooth classical solutions in the natural topology, in contrast to oscillatory radial solutions. More generally, Hasegawa \cite{H} established local existence, uniqueness, and asymptotic behavior of singular radial solutions for general supercritical nonlinearities on spherically symmetric Riemannian manifolds, including hyperbolic space.

\medskip

The purpose of the present paper is to complement and extend these results by simultaneously describing the singular behavior at the pole, the global continuation of the corresponding radial solution, and its asymptotic behavior at hyperbolic infinity. Our results reveal a sharp change in the global structure as $p$ passes through the Sobolev exponent. First, in the interval $p_{\mathrm S}\le p<p_{\mathrm{Sob}}$, we construct a family of singular solutions that selects the fast mode $e^{-\nu r}$ at infinity. Second, at the conformally invariant pair
$(p,\lambda)=(p_{\mathrm{Sob}},\frac{N(N-2)}{4})$, we identify an explicit singular profile and show that all nonremovable positive radial singularities are described by either constant or nonconstant positive periodic Fowler orbits.
Third, for $p>p_{\mathrm{Sob}}$, we construct a unique global positive singular solution for the full range $\lambda\le \frac{N(N-2)}{4}$ and determine its limiting behavior at infinity.

Before stating the main results, we recall a standard definition. 
A function $u\in C^2((0,\infty))$ is called a \emph{positive radial singular solution} if it solves \eqref{eq:radial}, remains positive for $r>0$, and satisfies
$u(r)\to+\infty$ as $r\to0^+$.

In what follows we set
\begin{equation}\label{aa}
	m=\frac{2}{p-1},
	\qquad
	L=\bigl(m(N-2-m)\bigr)^{1/(p-1)}.
\end{equation}
Our first result treats the range between the Serrin and Sobolev exponents.
\begin{thm}\label{thm1}
    Let $N\ge3$, $\frac{N}{N-2}\le p<\frac{N+2}{N-2}$, and
   $\lambda\le\frac{(N-1)^2}{4}$. Then \eqref{eq1} admits a family of positive radial singular solutions $u_s(r)$ such that, as $r\to0^+$
   \begin{align*}
   	u_s(r)\sim
   	\begin{cases}
   		\Bigl(\frac{N-2}{\sqrt{2}}\Bigr)^{N-2}
   		r^{2-N}
   		\bigl(\ln\frac1r\bigr)^{-\frac{N-2}{2}},\quad
   		&\text{if } p=\frac{N}{N-2},\\
   		Lr^{-m},
   		&\text{if } \frac{N}{N-2}<p<\frac{N+2}{N-2},
   	\end{cases}
   \end{align*}
   and
   \begin{align*}
   	u_s(r)\sim u_\infty e^{-\nu r}
   	\qquad\text{as }r\to+\infty,
   \end{align*}
   where $u_\infty>0$ and $\nu$ is defined in \eqref{eq:indicial-roots}.
\end{thm}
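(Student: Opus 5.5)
We construct the solutions by shooting from infinity along the fast mode. We then use a small-amplitude perturbation argument to keep them positive down to the pole, and finally read off the behavior at the pole from the Euclidean local theory (Aviles, Gidas--Spruck).

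\textbf{Step 1: solutions with prescribed fast decay.} Write $\phi_\nu$ for the solution of the linear equation $\phi''+(N-1)\coth r\,\phi'+\lambda\phi=0$ with $\phi_\nu(r)\sim e^{-\nu r}$ as $r\to+\infty$. When $\lambda<\lzero^2$ this is the Green-function profile. When $\lambda=\lzero^2$ we have $\mu=\nu$, and we take the solution behaving like $e^{-\lzero r}$; the $re^{-\lzero r}$ mode is excluded by the definition of rapid decay. Since $\lambda\le\lzero^2$ is the bottom of the spectrum, $\phi_\nu$ has no zero on $(0,\infty)$, so $\phi_\nu>0$ there, and $\phi_\nu(r)\sim c_N r^{2-N}$ as $r\to0^+$.

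For $a>0$, a contraction argument in the weighted space $\{\,e^{\nu r}v\in L^\infty(R,\infty)\,\}$ produces a solution $u_a$ of \eqref{eq:radial} on $[R,\infty)$ with $u_a(r)\sim a e^{-\nu r}$. It uses the variation-of-constants formula with the two linear modes, and the nonlinearity $u^p=O(e^{-p\nu r})$ is integrable against them. By scaling in $a$, $R$ can be taken uniform for $a\le1$. We then continue $u_a$ backwards as long as it stays positive. Continuous dependence of the ODE on the data, together with the nonlinear term being $O(a^{p})$, gives
\[
u_a/a\to\phi_\nu \quad\text{in } C^1_{\rm loc}((0,\infty)) \text{ as } a\to0 .
\]
In particular, for fixed $\delta>0$ and $a$ small, $u_a>0$ on $[\delta,\infty)$ and $u_a'(\delta)<0$.

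\textbf{Step 2: positivity down to $0$ and nonremovability.} On $(0,\delta]$ the solution is decreasing as long as it is positive, because $(\sinh^{N-1}r\,u')'=-\sinh^{N-1}r\,(\lambda u+u^p)$. For $\lambda\le0$ we restrict $\delta$ so that $\lambda u+u^p$ has a sign once $u$ is large. A zero crossing inside $(0,\delta)$ would require $u$ to turn around, and we rule this out by monotonicity together with the flux identity
\[
\sinh^{N-1}\!r\,u'(r)=\sinh^{N-1}\!\delta\,u'(\delta)+\int_r^\delta \sinh^{N-1}\!s\,(\lambda u+u^p)\,ds .
\]
Hence $u_a>0$ on $(0,\infty)$ and $u_a$ increases to a limit $\alpha_a\in(0,\infty]$ as $r\to0^+$. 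It remains to exclude $\alpha_a<\infty$, the removable case, for small $a$, arguing by contradiction along a sequence $a\to0$.
\begin{itemize}
\item If $\alpha_a\to\infty$, rescale $v(s)=\alpha_a^{-1}u_a(\alpha_a^{-1/m}s)$. Then $v$ converges to a positive bounded entire radial solution of $\Delta v+v^p=0$ in $\RR^N$ with $p<p_{\mathrm{Sob}}$, contradicting the Gidas--Spruck Liouville theorem.
\item If $\alpha_a$ stays bounded, the flux identity at $r\to0$ gives $a\,c_N'+o(a)=\int_0^\delta\sinh^{N-1}(\lambda u_a+u_a^p)=O(\alpha_a+\alpha_a^p)$, with $c_N'>0$ the flux constant of $\phi_\nu$. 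So $\alpha_a\ge c\,a$. On the other hand, $u_a/a$ converges on compacts of $(0,\delta]$ to $\phi_\nu$, which is unbounded. A Harnack or monotonicity comparison on $(0,\delta]$ then forces $\alpha_a/a\to\infty$, and rescaling at the scale where $u_a\approx\alpha_a$ reduces again to the Liouville contradiction.
\end{itemize}
This step is the main obstacle, since regular positive fast-decaying solutions (Mancini--Sandeep) do exist for some $\lambda$. The point is that they cannot occur at arbitrarily small amplitude. I would first try to make the dichotomy above quantitative through the flux identity.

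\textbf{Step 3: asymptotics.} The singular solutions are $u_s:=u_a$ for all small $a>0$, with $u_\infty=a$, so $u_s\sim u_\infty e^{-\nu r}$ at infinity by construction. Near $r=0$ the equation is a bounded perturbation of the Euclidean Lane--Emden equation: $\coth r=\frac1r+O(r)$ and the term $\lambda u$ is lower order. The local classification therefore applies to the nonremovable singularity. A fundamental-solution profile $c\,r^{2-N}$ is impossible for $p\ge p_{\mathrm S}$, because $u^p\sim r^{-p(N-2)}$ would make the flux integral diverge. Hence Aviles' theorem gives the logarithmic profile with the constant $\bigl(\frac{N-2}{\sqrt2}\bigr)^{N-2}$ when $p=p_{\mathrm S}$, and Gidas--Spruck gives $u\sim Lr^{-m}$ when $p_{\mathrm S}<p<p_{\mathrm{Sob}}$.

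To justify the transfer to the curved setting, pass to Emden--Fowler variables $t=-\ln r$, $w=r^m u$. The curvature contributes terms $O(e^{-2t})$, which do not affect the limiting autonomous dynamics. For $p_{\mathrm S}<p<p_{\mathrm{Sob}}$ the equilibrium $L$ is a saddle, so the limit set argument of Gidas--Spruck goes through unchanged.
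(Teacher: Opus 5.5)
\textbf{There is a genuine gap in Step~2.} That step carries essentially the whole content of the theorem: positivity down to the pole and nonremovability for small $a$.

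Your claim that $u_a$ is decreasing on $(0,\delta]$ as long as it is positive has the implication backwards. Where $\lambda u+u^p>0$, the identity $(\sinh^{N-1}r\,u')'=-\sinh^{N-1}r\,(\lambda u+u^p)$ makes $\sinh^{N-1}r\,u'$ decreasing in $r$. So for $r<\delta$ it is \emph{larger} than its negative value at $\delta$, and your own flux identity shows the integral term pushing it up through $0$ as $r$ decreases. A turning point in $(0,\delta)$, after which $u_a$ decreases towards the pole and vanishes at some $R_a\in(0,\delta)$, is exactly the alternative that must be excluded, and nothing in the proposal excludes it. Hence ``$u_a>0$ on $(0,\infty)$ and $u_a\uparrow\alpha_a$'' is unproved, and both bullets treat only the regular alternative.

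The second bullet also does not close as written. When $\alpha_a$ stays bounded there is nothing to blow up: the rescaling by $\alpha_a$ produces the term $\lambda\alpha_a^{-2/m}v$. The contradiction there has to come from continuous dependence for the regular initial value problem, not from Liouville.

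More fundamentally, the Liouville theorem for entire solutions is not what governs small amplitudes. For $p>p_{\mathrm S}$ the nonlinearity becomes effective only at the scale $r\sim a^{1/(N-2-m)}$, where the relevant limit profile is the flat fast-decaying \emph{singular} solution. At $p=p_{\mathrm S}$ (where $m=N-2$) there is no such scale at all, and the nonlinearity acts only logarithmically in $\ln\frac1r$.

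\textbf{What is missing is an argument near the pole that traps small data.} Your small-amplitude shooting is the same one the paper uses, with $u_\infty=2^\nu\alpha$ and $\alpha$ small. Within it, the missing step can be supplied for $p_{\mathrm S}<p<p_{\mathrm{Sob}}$ in Emden--Fowler variables $w=r^mu$, $t=-\ln r$:
\begin{itemize}
\item The equation becomes $w''+cw'+F'(w)=O(e^{-2t})(|w|+|w'|)$, with $c=2m-(N-2)>0$ and $F(w)=-\frac{m(N-2-m)}{2}w^2+\frac{w^{p+1}}{p+1}$.
\item At $t_0=-\ln\delta$, Step~1 gives $w=O(a)$ and $w'\approx(N-2-m)w$. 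Hence $E=\frac12w'^2+F(w)\approx\frac{(N-2-m)(N-2-2m)}{2}w^2<0$.
\item Since $E'=-cw'^2+O(e^{-2t})(w^2+w'^2)$, a bootstrap keeps $E<0$ for $\delta$ and $a$ small.
\item As $F(0)=0$, this prevents $w$ from ever reaching $0$, and LaSalle then gives $w\to L$.
\end{itemize}

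This also shows that the remark in Step~3 is wrong: $L$ is not a saddle. Its characteristic equation $\gamma^2-(N-2-2m)\gamma+2(N-2-m)=0$ has both roots with negative real part, so $L$ attracts as $r\to0^+$. The saddle is $w=0$, with roots $N-2-m$ and $-m$, and your data start on its unstable direction.

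At $p=p_{\mathrm S}$ one has $F(w)=\frac{w^{p+1}}{p+1}\ge0$, so this energy does not trap. A separate centre-manifold or comparison argument for $w''+(N-2)w'+w^p=O(e^{-2t})$ is needed there.

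The paper avoids all of this. Via $s=\ln\coth\frac r2$ and $s^{-\nu}u=\Phi(s)\,\widetilde w(\tau(s))$, it converts fast-decay shooting into the regular initial value problem $\widetilde w''+\frac{N-1}{\tau}\widetilde w'+K(\tau)\widetilde w^p=0$, $\widetilde w(0)=\alpha$. Positivity and slow decay for small $\alpha$ then come from Yanagida--Yotsutani, because $K\sim k_\infty\tau^{l_2}$ with $l_2<\frac{(N-2)p-(N+2)}{2}$. The behaviour at the pole, including the logarithmic case, comes from Li's theorems.
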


At the critical exponent, the value $\lambda=\frac{N(N-2)}{4}$ is distinguished by conformal invariance.
Besides an explicit solution, the equation possesses a family of Fowler-type singular solutions. Set \begin{align*}
	v_0=\biggl(\frac{N-2}{2}\biggr)^{(N-2)/2}.
	\end{align*}

\begin{thm}\label{thm2}
     Let $N\ge3$, $p=\frac{N+2}{N-2}$ and $\lambda=\frac{N(N-2)}{4}$. Then \eqref{eq1} has an explicit positive radial singular solution 
     \begin{align*}
         U_{s}(r)=\left(\frac{N-2}{2\sinh r}\right)^{\frac{N-2}{2}}.
     \end{align*}
     Moreover, \eqref{eq1} has a family of positive radial singular solutions $u_s$ satisfying 
     \begin{equation}\label{47}
         0<\liminf_{r\to0^+}(\sinh r)^{\frac{N-2}{2}}u_s(r)<v_0<\limsup_{r\to0^+}(\sinh r)^{\frac{N-2}{2}}u_s(r)<\left(\frac{N(N-2)}{4}\right)^{\frac{N-2}{4}}
     \end{equation}
     and $u_s(r)\sim\bar{u}_{\infty}e^{\frac{2-N}{2}r}$ as $r\to+\infty$ for some $\bar{u}_\infty>0$.
\end{thm}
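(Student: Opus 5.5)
We plan to remove the first-order term and the potential by the conformal change adapted to $\sinh r$. Set
\begin{align*}
u(r)=(\sinh r)^{-\frac{N-2}{2}}w(t),\qquad t=\ln\coth\frac r2 ,
\end{align*}
so that $dt/dr=-1/\sinh r$, with $t\to+\infty$ as $r\to0^+$ and $t\to0^+$ as $r\to+\infty$. The choice $\lambda=\frac{N(N-2)}{4}$ and $p=\frac{N+2}{N-2}$ is exactly what makes the transformed equation autonomous. Note that $\sinh r=1/\sinh t$ under this change of variables.

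The first step is to verify the reduction. Writing $u=(\sinh r)^{-a}w$ with $a=\frac{N-2}{2}$ and computing $u''+(N-1)\coth r\,u'$, the $\coth r\,w'$ terms cancel. The zeroth-order terms combine with $\lambda u$ into a multiple of $(\sinh r)^{-a-2}w$. Passing to $t$, where $\frac{d}{dr}=-\frac{1}{\sinh r}\frac{d}{dt}$ and $\frac{d^2}{dr^2}$ produces $\frac{1}{\sinh^2 r}w_{tt}$ plus a $\coth r$ correction that again cancels, the equation should become
\begin{align*}
w_{tt}-\Bigl(\frac{N-2}{2}\Bigr)^2w+w^{\frac{N+2}{N-2}}=0 ,\qquad t>0 .
\end{align*}
The key identity is that $u^p=(\sinh r)^{-a-2}w^p$ carries the same weight. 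I would check this computation carefully, since every constant in the statement depends on it.

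Equivalently, $v(r)=(\sinh r)^{\frac{N-2}{2}}u$ solves an autonomous equation in $t$. Its positive equilibrium is $w^{p-1}=a^2$, that is, $w\equiv v_0=\bigl(\frac{N-2}{2}\bigr)^{\frac{N-2}{2}}$. This gives $U_s$ directly, and one can also verify $U_s$ by substituting into \eqref{eq:radial}.

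The second step is a phase-plane analysis. The equation has the conserved energy
\begin{align*}
E=\tfrac12 w_t^2-\tfrac{a^2}{2}w^2+\tfrac{N-2}{2N}w^{\frac{2N}{N-2}} .
\end{align*}
The equilibrium $v_0$ is a center, and the level sets $E\in(E(v_0),0)$ are positive closed orbits surrounding it, bounded by the homoclinic orbit $E=0$. On each such orbit, $w$ oscillates between a minimum $w_-\in(0,v_0)$ and a maximum $w_+\in(v_0,w_*)$, where $w_*$ is the positive zero of the potential at level $0$, given by $\frac{N-2}{2N}w_*^{\frac{4}{N-2}}=\frac{a^2}{2}$. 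Explicitly,
\begin{align*}
w_*^{\frac{4}{N-2}}=\frac{N(N-2)}{4},\qquad w_*=\Bigl(\frac{N(N-2)}{4}\Bigr)^{\frac{N-2}{4}} .
\end{align*}
This is exactly the upper bound in \eqref{47}.

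Each periodic orbit $w$ is globally defined and positive on $t\in\mathbb{R}$, so the corresponding $u_s$ is positive on $(0,\infty)$. As $r\to0^+$ we have $t\to\infty$, and $(\sinh r)^{\frac{N-2}{2}}u_s=w(t)$ oscillates. Hence
\begin{align*}
\liminf_{r\to0^+}(\sinh r)^{\frac{N-2}{2}}u_s=w_-,\qquad \limsup_{r\to0^+}(\sinh r)^{\frac{N-2}{2}}u_s=w_+ ,
\end{align*}
which gives \eqref{47} and in particular $u_s\to\infty$. Varying $E$, or the phase, produces the family.

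The third step is the behavior at infinity. As $r\to\infty$ we have $t\to0^+$, and $w$ is smooth at $t=0$ with $w(0)>0$ for orbits that stay positive. We also have $\sinh r\sim e^r/2$. Therefore
\begin{align*}
u_s\sim w(0)\,2^{\frac{N-2}{2}}e^{-\frac{N-2}{2}r},
\end{align*}
so $\bar u_\infty=2^{a}w(0)>0$; this rate coincides with $\nu$ when $\lambda=\frac{N(N-2)}{4}$.

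The main obstacle is the reduction computation itself, in particular getting the exact cancellation of the $\coth$ terms and the constant $a^2$. Once that is established, the remaining steps are routine ODE phase-plane facts.
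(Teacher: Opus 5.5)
Your proposal is correct and follows essentially the paper's route: your variable $t=\ln\coth\frac r2$ is the negative of the paper's $t=\ln\tanh\frac r2$ and the reduced equation is reversible, so both proofs obtain $U_s$ from the equilibrium $v_0$, the Fowler family from the closed energy levels $F(v_0)<E<0$, and the decay at infinity from $w(0)>0$ (the paper proves periodicity in Lemma~\ref{lem1}, whereas you take it as a standard phase-plane fact). One small slip: at $\lambda=\frac{N(N-2)}{4}$ one has $\mu=\frac{N-2}{2}$ and $\nu=\frac N2$, so the rate $e^{-\frac{N-2}{2}r}$ is the slow mode $\mu$, not $\nu$, in agreement with Theorem~\ref{thm4}(iii).
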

The preceding periodic behavior gives a complete radial description of nonremovable singularities at the critical pair.

\begin{thm}\label{thm3}
	Assume the hypotheses of Theorem~\ref{thm2}, and let
	$u\in C^2(\HH^N\setminus\{Q\})$ be a positive radial solution of \eqref{eq1} with a nonremovable singularity at $Q$. Then either $u=U_s$, or there exist $b\in(0,v_0)$ and $T\in[0,T_b)$ such that
	\begin{align*}
	u(r)
	=
	(\sinh r)^{-\frac{N-2}{2}}
	v_b\left(\ln\tanh\frac r2+T\right),
	\qquad r>0,
	\end{align*}
	where $v_b$ denotes the positive nonconstant periodic solution of \eqref{1} introduced in Section~4, normalized by $v_b(0)=\inf_{\RR}v_b=b$, and $T_b$ denotes its least positive period.
\end{thm}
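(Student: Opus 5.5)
The plan is to reduce the radial equation to an autonomous Hamiltonian ODE and then read off the classification from its phase portrait. I will use the conformal change of variables
\[
u(r)=(\sinh r)^{-\frac{N-2}{2}}v(t),\qquad t=\ln\tanh\frac r2 .
\]
Since $dt/dr=1/\sinh r$, the map $r\mapsto t$ is an increasing diffeomorphism of $(0,\infty)$ onto $(-\infty,0)$. The first step is a direct computation, using $\cosh^2 r-\sinh^2 r=1$, showing that for $p=\frac{N+2}{N-2}$ and $\lambda=\frac{N(N-2)}{4}$ equation \eqref{eq:radial} becomes
\[
v''-\frac{(N-2)^2}{4}v+v^{\frac{N+2}{N-2}}=0 .
\]
This is the autonomous equation \eqref{1} of Section~4. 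It has the positive equilibrium $v_0$, and this equilibrium gives back $U_s$. The equation conserves the Hamiltonian
\[
H(v,v')=\tfrac12 v'^2-\tfrac{(N-2)^2}{8}v^2+\tfrac{N-2}{2N}v^{\frac{2N}{N-2}} .
\]
A positive radial solution $u$ on $\HH^N\setminus\{Q\}$ therefore corresponds exactly to a positive solution $v$ of \eqref{1} defined on the half-line $(-\infty,0)$, and $H(v,v')\equiv h$ is constant along it.

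Next I will sort the admissible $v$ by the energy level $h$. The potential $G(v)=-\frac{(N-2)^2}{8}v^2+\frac{N-2}{2N}v^{2N/(N-2)}$ has the following shape on $v\ge0$: it is negative on $(0,v_*)$, its only critical point there is the minimum at $v_0$, and it tends to $+\infty$. Hence every orbit is bounded and exists globally. The levels split into four cases.

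\textbf{(i)} $h=G(v_0)$. Then $v\equiv v_0$ and $u=U_s$.

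\textbf{(ii)} $G(v_0)<h<0$. The orbit is one of the nonconstant positive periodic orbits around $v_0$, and it is parametrized by its minimum $b\in(0,v_0)$. Uniqueness for the ODE then gives $v(t)=v_b(t+T)$. Reducing $T$ modulo the least period $T_b$ lets us take $T\in[0,T_b)$.

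\textbf{(iii)} $h>0$. Here the level curve crosses $\{v=0\}$ with $v'\neq0$. The part of it lying in $\{v>0\}$ is a compact arc, traversed in finite time because $|v'|$ stays bounded away from $0$ near $v=0$ and the level curve contains no equilibrium. So $v$ must vanish somewhere on the infinite interval $(-\infty,0)$, contradicting positivity.

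\textbf{(iv)} $h=0$ with $v>0$. The orbit is the homoclinic orbit to the origin (or a piece of it), with $v(t)=O(e^{\frac{N-2}{2}t})$ as $t\to-\infty$. Since $e^{t}=\tanh\frac r2\sim r/2$, this gives $u(r)=O\bigl(r^{-\frac{N-2}{2}}\,r^{\frac{N-2}{2}}\bigr)=O(1)$ near $r=0$. Then $u$ is bounded near $Q$, and the standard removable-singularity theorem for bounded solutions of semilinear equations (in local Euclidean coordinates the equation is uniformly elliptic with smooth coefficients) shows that the singularity is removable. This is excluded by hypothesis. The level $h=0$ also contains $v\equiv0$, which is excluded by positivity.

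Conversely, in cases (i) and (ii) the function $v$ is bounded below by a positive constant, so $u\ge c\,(\sinh r)^{-\frac{N-2}{2}}\to\infty$ and the singularity is genuinely nonremovable. This agrees with Theorem~\ref{thm2}.

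I expect the main care to be needed in two places. The first is the exact computation verifying that the conformal substitution removes all non-autonomous terms; this is where the specific pair $(p,\lambda)$ is used. The second is the exclusion of the energy levels $h\ge0$, in particular checking that the homoclinic case really gives a bounded $u$ and not a weaker singularity. For the latter I will make the asymptotics $v\sim c\,e^{\frac{N-2}{2}t}$ as $t\to-\infty$ explicit by linearizing \eqref{1} at the hyperbolic equilibrium $0$, whose eigenvalues are $\pm\frac{N-2}{2}$.
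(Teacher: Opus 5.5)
Your proposal is correct and follows essentially the paper's route. You use the same substitution $t=\ln\tanh\frac r2$ to reach the autonomous Hamiltonian equation \eqref{1}, conservation of energy, and a phase-plane classification: the homoclinic level gives a $u$ with a finite limit at $Q$ (the paper integrates it explicitly to \eqref{eq:regular}), and the levels in $(F(v_0),0)$ give the periodic Fowler orbits normalized at their minimum. The only difference is bookkeeping: the paper splits cases according to whether $v'$ has at most one zero or at least two, while you split by energy level and rule out $h>0$ explicitly with a finite-traversal-time argument, which the paper's zero-counting covers only implicitly.
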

Finally, we consider the supercritical range.

\begin{thm}\label{thm4}
Let $N\ge3$, $p>\frac{N+2}{N-2}$ and $\lambda\le\frac{N(N-2)}{4}$. Then \eqref{eq1} admits a unique positive radial singular solution $u_s$, which satisfies
\begin{align*}
u_s(r)=L(\sinh r)^{-m}+\frac{L^p-(\lambda-m)L}{4N-4-6m}(\sinh r)^{2-m}+O\bigl((\sinh r)^{4-m}\bigr),
\qquad\text{as }r\to0^+.
\end{align*}
Furthermore, the following alternatives hold:
\begin{enumerate}[label=\textnormal{(\roman*)}]
\item If $\lambda<0$, then
\begin{align*}
u_s(r)\longrightarrow(-\lambda)^{1/(p-1)}
\qquad\text{as }r\to+\infty.
\end{align*}
\item If $\lambda=0$, then
\begin{align*}
u_s(r)\sim
\left(\frac{N-1}{p-1}\right)^{1/(p-1)}r^{-1/(p-1)}
\qquad\text{as }r\to+\infty.
\end{align*}
\item If $0<\lambda\le \frac{ N(N-2)}{4}$, then
\begin{align*}
u_s(r)\sim \bar u_\infty e^{-\mu r}
\qquad\text{as }r\to+\infty,
\end{align*}
where $\bar u_\infty>0$ and $\mu$ is defined in \eqref{eq:indicial-roots}.
\end{enumerate}
\end{thm}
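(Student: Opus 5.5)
A natural first step is a change of variables adapted to the singular profile $L(\sinh r)^{-m}$. I would write $u(r)=(\sinh r)^{-m}w(t)$ with a new variable $t$ such as $t=\ln\tanh\frac r2$, so that $t\to-\infty$ as $r\to0^+$ and $t\to0^-$ as $r\to\infty$. Alternatively one can keep $r$ and set $s=\sinh^2 r$. A direct computation gives
\[
u'' + (N-1)\coth r\,u' = (\sinh r)^{-m}\bigl[w''+\dots\bigr] + \bigl(m^2\coth^2 r - m(N-1)\coth^2 r + m\bigr)(\sinh r)^{-m}w + \dots .
\]
Using $\coth^2 r=1+(\sinh r)^{-2}$, the leading part near $r=0$ is the Euclidean Emden--Fowler operator. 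The constant $L$ solves $L^{p-1}=m(N-2-m)$, and there is a remainder of order $(\sinh r)^{2}$ relative to it. This is exactly where the coefficient $\bigl(L^p-(\lambda-m)L\bigr)/(4N-4-6m)$ comes from. The idea is to plug in the ansatz $w=L+c\,\sinh^2 r+\dots$ and match the $(\sinh r)^{2-m}$ terms.

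\textbf{Local existence and uniqueness of the singular branch.} Linearizing at $w\equiv L$ in the Emden--Fowler variable $\tau=\ln r$ gives the characteristic roots of $\sigma^2+(N-2-2m)\sigma-(p-1)m(N-2-m)=0$. For $p>\frac{N+2}{N-2}$ we have $N-2-2m>0$, so there is exactly one negative root and one positive root. Setting $w=L+z$, the singular solutions with $u\sim L r^{-m}$ therefore form a one-dimensional stable manifold as $\tau\to-\infty$. This manifold corresponds to a unique trajectory, up to the fact that the equation is non-autonomous in $r$, so the scaling freedom is lost. I would make this rigorous by a fixed point argument on $(0,\delta)$ for
\[
w(r)=L+c(\sinh r)^2+h(r),\qquad |h|\le C(\sinh r)^4,
\]
writing the equation for $h$ as an integral equation with kernels built from $r^{\sigma_\pm}$. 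The integrability condition forces the coefficient of the growing mode to vanish, which gives uniqueness among solutions with $u\sim Lr^{-m}$.

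Uniqueness among \emph{all} positive singular solutions is a separate point. It needs the fact that any positive singular solution satisfies $u\sim Lr^{-m}$ for $p>p_{\mathrm{Sob}}$; I would assume this from Hasegawa's local theory \cite{H}. The denominator $4N-4-6m$ is positive in this range, so the expansion coefficient is well defined.

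\textbf{Global continuation and positivity.} This is the main obstacle: one must show the local singular solution stays positive for all $r>0$ when $\lambda\le\frac{N(N-2)}{4}$. I would use a Pohozaev-type comparison against the conformal case. The function
\[
E(r)=\tfrac12 u'^2+\tfrac\lambda2 u^2+\tfrac{1}{p+1}u^{p+1}
\]
is nonincreasing, since $E'=-(N-1)\coth r\,u'^2\le0$. However, near $r=0$ it is infinite, so monotonicity alone does not prevent a zero. I would instead argue by an intersection/comparison with $U_s$-type profiles, or by a Pohozaev identity on $(0,R)$ assuming $u(R)=0$. In that identity the boundary term at $0$ vanishes for $p>p_{\mathrm{Sob}}$ because $r^{N}u^{p+1}\sim r^{N-(p+1)m}\to0$, while the interior integrand has a sign when $\lambda\le\frac{N(N-2)}{4}$ and $p>p_{\mathrm{Sob}}$. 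This gives a contradiction, so $u>0$. The identity would use the weight $\sinh^{N-1}r$ and multipliers of the form $\int_0^r\sinh^{1-N}\cdot\sinh^{N-1}$, as in the Mancini--Sandeep argument, and the sign check is the delicate computation.

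\textbf{Behavior at infinity.} Once $u>0$ is global and $E$ is decreasing, $u$ is bounded for large $r$ and converges to an equilibrium of $u''+(N-1)u'+\lambda u+u^p=0$. The three cases are then treated separately.

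\begin{enumerate}[label=\textnormal{(\roman*)}]
\item If $\lambda<0$, the positive equilibrium $(-\lambda)^{1/(p-1)}$ is the limit, since $0$ is a saddle and is excluded for positive solutions.
\item If $\lambda=0$, then $u\to0$ and the dominant balance is $(N-1)u'\approx-u^p$. This gives the stated $r^{-1/(p-1)}$ rate, after ruling out the exponentially decaying branch $u'\sim -(N-1)u$ by a positivity/ordering argument.
\item If $0<\lambda\le\frac{N(N-2)}{4}$, then $u\to0$ exponentially. Standard ODE asymptotics give $u\sim c_1e^{-\mu r}+c_2e^{-\nu r}$, with the logarithmic variant at $\lambda=\lambda_0^2$ excluded since $\frac{N(N-2)}{4}<\lambda_0^2$. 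Showing $c_1>0$, i.e. slow decay, would again follow from the Pohozaev identity: a fast-decaying solution has finite energy, and the identity then forces nonexistence for $p>p_{\mathrm{Sob}}$.
\end{enumerate}
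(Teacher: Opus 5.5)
Your architecture matches the paper's: a contraction near $L(\sinh r)^{-m}$, a Pohozaev identity to forbid a first zero, the dissipative energy for global continuation, and Bandle--Kabeya asymptotics at infinity. The problem is that two steps are justified by mechanisms that are false.

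\textbf{Local uniqueness.} The linearization has the wrong sign. From $w''+(N-2-2m)w'-m(N-2-m)w+w^p=0$ you get $\sigma^2+(N-2-2m)\sigma+(p-1)m(N-2-m)=0$. Since $(p-1)m(N-2-m)=2(N-2-m)>0$ and $N-2-2m>0$, \emph{both} roots have negative real part, so there is no saddle. Your explanation of uniqueness rests on the saddle picture, and that picture would actually give non-uniqueness. If there were a root $\sigma_+>0$, the mode $r^{\sigma_+}$ would vanish at $r=0$, and the solutions with $u\sim Lr^{-m}$ would form a genuine one-parameter family. Losing the scaling symmetry turns a family related by dilations into a family of distinct solutions; it does not select one. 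Your ansatz $|h|\le C(\sinh r)^4$ would then be an imposed restriction, not a consequence. The correct mechanism, used in Lemma~\ref{lem2}, is that both homogeneous modes $r^{\sigma_\pm}$ blow up as $r\to0^+$. Hence any solution with $(W-L,W')\to0$ as $z=\ln\sinh r\to-\infty$ must equal the fixed point of $\zeta\mapsto\int_{-\infty}^z e^{(z-\rho)A}J(\zeta(\rho),\rho)\,d\rho$, which is unique in a small ball. To put an arbitrary singular solution into that ball you need $W'\to0$ as well as $W\to L$; the paper proves both in Lemma~\ref{lem10}.

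\textbf{Behaviour at infinity.} In (i) you exclude the limit $0$ because ``$0$ is a saddle'', and in (ii) you exclude $u\sim u_\infty e^{-(N-1)r}$ by ``positivity/ordering''. Neither argument works. Convergence to $0$ along the stable direction $e^{-\nu r}$ is compatible with positivity. Indeed, Theorem~\ref{thm1} produces positive \emph{singular} solutions with exactly this decay, for all $\lambda\le\lzero^2$ (including $\lambda\le0$), when $\frac{N}{N-2}\le p<\frac{N+2}{N-2}$. So any valid exclusion must use $p>\frac{N+2}{N-2}$.

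The tool is the Pohozaev identity you invoke in (iii), with multipliers $(\sinh r)^Nu'$ and $\cosh r(\sinh r)^{N-1}u$ (Lemma~\ref{Pohozaev1}). Its right-hand side $\int_0^\infty(\sinh r)^{N-1}\cosh r\bigl[(\lambda-\frac{N(N-2)}{4})u^2+(\frac{N}{p+1}-\frac{N-2}{2})u^{p+1}\bigr]dr$ is negative. You must then check the boundary terms at infinity:
\begin{itemize}
\item For $\lambda<0$, show $(\sinh r)^{N-1}u_s'\to0$; a nonzero limit would force decay like $e^{-(N-1)r}$, slower than $e^{-\nu r}$. Then $u_s'=O(e^{-\nu r})$ and $(\sinh r)^Nu_s'^2\to0$ because $2\nu>N$.
\item For $\lambda=0$, the vanishing follows from $u_s'\sim-(N-1)u_\infty e^{-(N-1)r}$.
\item In (iii) at the endpoint $\lambda=\frac{N(N-2)}{4}$, the boundary terms do \emph{not} vanish for $u\sim u_\infty e^{-\frac{N}{2}r}$. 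They converge to $u_\infty^2/2^{N+1}>0$, and the contradiction comes from this sign, not from finite energy alone.
\end{itemize}

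Finally, for $\lambda<0$ the convergence of $u_s$ to an equilibrium does not follow from the monotone energy alone. You also need $u_s'\to0$ and the fact that $\frac{\lambda}{2}u^2+\frac{1}{p+1}u^{p+1}$ is not constant on any interval.
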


The three parameter regimes require rather different arguments. To prove Theorem~\ref{thm1}, we introduce the inversion $s=\ln\coth\frac r2$, which exchanges the pole and hyperbolic infinity. After factoring out the fast decay mode,  we transform the problem into a weighted Euclidean radial equation via a positive solution of an associated linear equation. Existence and asymptotic results for that weighted equation can then be adapted to produce global solutions selecting the fast hyperbolic mode.
For Theorem~\ref{thm4}, we first construct a unique local solution near the singular profile $L(\sinh r)^{-m}$ by a fixed-point argument.
A Pohozaev-type identity prevents this solution from vanishing at a finite radius, while a dissipative energy controls its global continuation and asymptotic limits at infinity. For the critical pair, the conformal variable $t=\ln\tanh\frac r2$ reduces the equation to an autonomous Hamiltonian equation. Its constant equilibrium gives $U_s$, while its nonconstant periodic trajectories yield the oscillatory Fowler family and the classification in Theorem~\ref{thm3}.

\medskip

The remainder of the paper is organized as follows.
Section \ref{S2} proves Theorem~\ref{thm1} via the weighted Euclidean reduction.
Section \ref{S3} constructs the supercritical singular profile, derives the relevant Pohozaev identity, and proves Theorem~\ref{thm4}.
Section \ref{S4} treats the conformally invariant case and proves Theorems~\ref{thm2} and \ref{thm3} using phase-plane and energy arguments.
Finally, in the appendix, we collect some auxiliary results for the corresponding Euclidean weighted equation that are used in Section \ref{S2}.

\textit{Notation.} Throughout the paper, $C$ denotes a positive constant independent of $r$ that may change from line to line.

\section{Proof of Theorem \ref{thm1}} \label{S2}
In this section, we study the existence of positive singular solutions to \eqref{eq:radial} with fast decay at infinity, where $\frac{N}{N-2}\le p<\frac{N+2}{N-2}$ and $\lambda\le\lambda_{0}^2$. 

For $\lambda\le\lzero^2$, it follows from \cite[Lemmas~2.3 and~2.5]{BK} that if $u$ decays rapidly at infinity, then $u$ satisfies the precise asymptotic behavior
\begin{equation}\label{24}
	 e^{\nu r}u(r)\to u_\infty\qquad\text{as }r\to+\infty,
\end{equation}
for some $u_\infty>0$. We first establish the asymptotic expansions at infinity for positive solutions to \eqref{eq:radial} satisfying the fast-decay condition \eqref{24}.
\begin{lem}\label{lem4}
	Assume that $N \ge 3$, $p > 1$, and $\lambda \le \lzero^2$. Let $u$ be a positive solution to \eqref{eq:radial} satisfying the fast-decay condition \eqref{24}. Then, as $r \to +\infty$, we have the asymptotic expansions
	\begin{equation} \label{fast-expansion}
		u(r) = u_\infty e^{-\nu r} + O(e^{-\kappa r}), \qquad u'(r) = -\nu u_\infty e^{-\nu r} + O(e^{-\kappa r}),
	\end{equation}
	where $\kappa = \min\{\nu+2, p\nu\}$.
\end{lem}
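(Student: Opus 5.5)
The plan is to treat \eqref{eq:radial} near $r=+\infty$ as a perturbation of the constant-coefficient equation whose characteristic roots are $\mu\le\nu$ from \eqref{eigen}. Since $(N-1)\coth r=(N-1)+(N-1)(\coth r-1)$ and $\coth r-1=O(e^{-2r})$, we can rewrite \eqref{eq:radial} as
\begin{equation*}
u''+(N-1)u'+\lambda u=f(r),\qquad f(r):=-(N-1)(\coth r-1)\,u'(r)-u(r)^p .
\end{equation*}
If we know that $u'=O(e^{-\nu r})$, then \eqref{24} gives $f=O(e^{-(\nu+2)r})+O(e^{-p\nu r})=O(e^{-\kappa r})$. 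Note that $\kappa>\nu$ strictly, because $p>1$ and $\nu\ge\lzero>0$. The expansion \eqref{fast-expansion} should then follow from variation of constants.

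\emph{Step 1: a priori bound on $u'$.} I would use the divergence form
\begin{equation*}
\bigl((\sinh r)^{N-1}u'\bigr)'=-(\sinh r)^{N-1}\bigl(\lambda u+u^p\bigr)=O\bigl(e^{(N-1-\nu)r}\bigr)=O\bigl(e^{\mu r}\bigr),
\end{equation*}
where we used $N-1-\nu=\mu$. There are two cases.
\begin{itemize}
\item If $\mu>0$, integrating from a fixed $r_0$ gives $(\sinh r)^{N-1}u'=C+O(e^{\mu r})$. Hence $u'=O(e^{-(N-1)r})+O(e^{-\nu r})=O(e^{-\nu r})$, since $\nu\le N-1$.
\item If $\mu\le0$ (that is, $\lambda\le0$), I would integrate from $r$ to $\infty$ instead. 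If $\mu=0$, the right side is $O(1)$, which gives a term $O(r e^{-(N-1)r})=O(re^{-\nu r})$; this bound suffices after one pass of Step 2, or it can be sharpened by noting that the right side is actually $O(e^{-(p-1)\nu r})$ when $\lambda=0$. If $\mu<0$ the integral converges, and we get $(\sinh r)^{N-1}u'=C'+O(e^{\mu r})$. In that case $C'\ne0$ would force $u\sim c\,e^{-(N-1)r}$ with $c\neq0$. Since $N-1<\nu$, this decays more slowly than $e^{-\nu r}$ and contradicts \eqref{24}. So $C'=0$ and $u'=O(e^{-\nu r})$.
\end{itemize}

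\emph{Step 2: variation of constants.} For $\mu<\nu$, the general solution is
\begin{equation*}
u=Ae^{-\mu r}+Be^{-\nu r}+\frac{1}{\nu-\mu}\Bigl(e^{-\mu r}\!\int_r^\infty e^{\mu s}f(s)\,ds-e^{-\nu r}\!\int_r^\infty e^{\nu s}f(s)\,ds\Bigr).
\end{equation*}
Both integrals converge because $\kappa>\nu\ge\mu$, and the bracketed term is $O(e^{-\kappa r})$. The bound \eqref{24} then forces $A=0$, and it identifies $B=u_\infty$. Differentiating this formula, with the same integral estimates, gives the expansion for $u'$. In the resonant case $\lambda=\lzero^2$ (so $\mu=\nu=\lzero$), the fundamental system is $e^{-\nu r}$ and $re^{-\nu r}$. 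The kernel is $(s-r)e^{-\nu(r-s)}$, and writing the particular solution as $\int_r^\infty (s-r)e^{\nu(s-r)}f(s)\,ds$ again yields $O(e^{-\kappa r})$ with no logarithmic loss, because $\kappa>\nu$. The coefficient of $re^{-\nu r}$ must vanish by \eqref{24}.

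I expect the main obstacle to be Step 1, specifically obtaining the sharp rate $u'=O(e^{-\nu r})$ uniformly across the sign cases of $\lambda$, including the borderline $\mu=0$. A merely crude bound on $u'$ would degrade the $(\coth r-1)u'$ contribution below the rate $e^{-(\nu+2)r}$ required in $\kappa$. If the direct argument is delicate in some case, I would fall back on a bootstrap. First obtain $u'=O(e^{-\nu r}r)$ or $O(e^{-(\nu-\varepsilon)r})$. Insert this into Step 2 to get the expansion of $u'$ with the rate $e^{-(\nu+2-\varepsilon)r}$. Feeding this improved bound on $u'$ back into $f$ then yields exactly $O(e^{-\kappa r})$.
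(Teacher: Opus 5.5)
Your proposal is correct and follows the paper's proof essentially step by step. You first get $u'=O(e^{-\nu r})$ from the divergence form with the same case split on the sign of $\lambda$, including the sharpening $O(e^{-(p-1)\nu r})$ at $\lambda=0$ and ruling out a nonzero limit of $(\sinh r)^{N-1}u'$ via $N-1<\nu$ when $\lambda<0$; then you apply variation of constants, with the resonant case $\lambda=\lzero^2$ handled by conjugating with $e^{\nu r}$. The only slip is the overall sign of your particular solution when $\mu<\nu$: as written it solves the equation with right-hand side $-f$, so the bracket should be negated, as in the paper. This is harmless, since only the $O(e^{-\kappa r})$ size of that term is used.
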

\begin{proof}
First, we prove $u'(r) = O(e^{-\nu r})$ as $r\to+\infty$. Taking $R>0$ large enough and integrating \eqref{eq:radial} over $[R,r]$ for any $r>R$, we obtain 
	\begin{equation}\label{60}
		(\sinh r)^{N-1}u'(r)= (\sinh R)^{N-1}u'(R)-\int_{R}^{r}(\sinh \rho)^{N-1}(\lambda u+u^{p})\,d\rho,
	\end{equation}
	i.e.,
	\begin{equation}\label{26}
		u'(r)=\frac{(\sinh R)^{N-1}u'(R)}{(\sinh r)^{N-1}}-\frac{\int_{R}^{r}(\sinh \rho)^{N-1}(\lambda u+u^{p})\,d\rho}{(\sinh r)^{N-1}}.
	\end{equation} 
	By \eqref{24}, we see $(\sinh r)^{N-1}u=O(e^{\mu r})$ and $(\sinh r)^{N-1}u^{p}=O\left(e^{(N-1-p\nu)r}\right)$ as $r\to+\infty$. Let $\gamma= \max\{\mu, N-1-p\nu\}$. In fact, $\gamma=\mu$, since $N-1-p\nu<N-1-\nu=\mu$. We distinguish three cases:
	
	\textit{Case 1.} $\gamma > 0$. Passing to the limit in \eqref{26} and applying L'H\^{o}pital's rule, we get $u'(r) = O\left(e^{(\gamma - (N-1))r}\right)=O(e^{-\nu r})$ as $r\to+\infty$.
	
    \textit{Case 2.} $\gamma = 0$ (which occurs when $\lambda = 0$). The right-hand side of \eqref{60} converges at infinity. Hence, $\lim_{r\to+\infty} (\sinh r)^{N-1} u'(r) =\ell$ for some $\ell\in\RR$. Integrating \eqref{eq:radial} over $[r,+\infty)$ gives
	\begin{align*}
		\ell - (\sinh r)^{N-1} u'(r) = -\int_r^\infty (\sinh \rho)^{N-1} u^p\, d\rho = O\bigl(e^{(1-p)(N-1)r}\bigr).
	\end{align*}
	This implies $u'(r)=O\bigl(e^{(1-N)r}\bigr)=O(e^{-\nu r})$ as $r\to+\infty$.
	
	\textit{Case 3.} $\gamma < 0$ (which occurs when $\lambda < 0$). The right-hand side of \eqref{60} converges at infinity. Since $\lambda<0$ and $u(r)\to0$ as $r\to+\infty$, we see $\lambda u(r)+u^{p}(r)<0$ for $r$ large enough. We claim $u'(r)<0$ for $r$ large enough. Otherwise, there exists $r_0>0$ large enough such that $u'(r_0)\ge0$. By a similar identity to \eqref{60}, we have $u'(r)\ge0$ for $r\ge r_0$, which contradicts $u(r)\to0$ as $r\to+\infty$. Hence, there exists $\ell\le0$ such that $\lim_{r\to+\infty} (\sinh r)^{N-1} u'(r) =\ell$. Integrating \eqref{eq:radial} over $[r,+\infty)$ yields
	\begin{align*}
		\ell - (\sinh r)^{N-1} u'(r) = -\int_r^\infty (\sinh s)^{N-1}(\lambda u(s) + u^p(s)) \, ds = O(e^{\gamma r}).
	\end{align*}
	If $\ell<0$, the above identity implies $u'(r) \sim2^{N-1}\ell e^{-(N-1)r}$, and so $u(r)\sim\frac{2^{N-1}\ell}{1-N}e^{-(N-1)r}$ as $r\to+\infty$. Notice that $\nu > N-1$ when $\lambda < 0$. This contradicts the assumption $u(r) = O(e^{-\nu r})$. Thus, $\ell = 0$, and so $u'(r)=O\bigl(e^{(\gamma-(N-1))r}\bigr)=O(e^{-\nu r})$ as $r\to+\infty$.
	
 We now show a refined expansion at infinity. Define $q(r) = u(r) - u_\infty e^{-\nu r}$. Then $q(r) = o(e^{-\nu r})$ as $r\to+\infty$, and satisfies
	\begin{align*}
		q''+ (N-1)q' + \lambda q = \xi(r),
	\end{align*}
	where
	\begin{align*}
		\xi(r) = (N-1)(\coth r - 1)\bigl(\nu u_\infty e^{-\nu r} - q'(r)\bigr) - u^p(r).
	\end{align*}
	Since $\coth r - 1 = O(e^{-2r})$ and $q'(r) = u'(r) + \nu u_\infty e^{-\nu r} = O(e^{-\nu r})$ as $r\to+\infty$, we get $\xi(r) =O(e^{-\kappa r})$, where $\kappa = \min\{\nu+2, p\nu\}$. Since $p > 1$ and $\nu > 0$, we see $\kappa > \nu \ge \mu$.

		Suppose first that $\lambda < \lzero^2$, so that $\mu<\nu$. For $R>0$ large enough, we have 
	\begin{align*}
		q(r)=C_1 e^{-\nu r}+C_2e^{-\mu r}+\frac{e^{-\mu r}}{\nu-\mu}\int_{R}^r e^{\mu s}\xi(s)ds-\frac{e^{-\nu r}}{\nu-\mu}\int_{R}^r e^{\nu s}\xi(s)ds,\qquad r>R.
	\end{align*}
	Since  $\kappa>\nu>\mu$, we get 
	\begin{align*}
		C_2=-\frac{\int_{R}^{+\infty} e^{\mu s}\xi(s)ds}{\nu-\mu}.
	\end{align*}
	Consequently, for all sufficiently large $r$, 
	\begin{align*}
		\begin{aligned}
			q(r)&=C_1' e^{-\nu r}-\frac{e^{-\mu r}}{\nu-\mu}\int_{r}^{+\infty} e^{\mu s}\xi(s)ds+\frac{e^{-\nu r}}{\nu-\mu}\int_{r}^{+\infty} e^{\nu s}\xi(s)ds\\
			&=C_1' e^{-\nu r}+O(e^{-\kappa r}),
		\end{aligned}
	\end{align*}
	where $C_1'=C_1-\frac{\int_{R}^{+\infty}e^{\nu s}\xi(s)ds}{\nu-\mu}$. Thus, $C_1'=0$ since $q(r)=o(e^{-\nu r})$ as $r\to+\infty$.
	Moreover, 
	\begin{align*}
		q'(r)=\frac{\mu e^{-\mu r}}{\nu-\mu}\int_{r}^{+\infty} e^{\mu s}\xi(s)ds-\frac{\nu e^{-\nu r}}{\nu-\mu}\int_{r}^{+\infty} e^{\nu s}\xi(s)ds=O(e^{-\kappa r}),\qquad r\to+\infty.
	\end{align*}
	
		It remains to consider $\lambda=\lzero^2$. In this case $\mu=\nu=\lzero$. Set $z(r) = e^{\nu r}q(r)$. Then $z$ satisfies
	\begin{align*}
		z''(r) = e^{\nu r}\xi(r)=O\bigl(e^{(\nu-\kappa)r}\bigr)\qquad \text{as }r\to+\infty,
	\end{align*}
	which implies $\lim_{r\to+\infty}z'(r)=\ell_1$ since $\kappa>\nu$. Moreover, $q(r) = o(e^{-\nu r})$ implies $z(r) \to 0$ as $r \to +\infty$. Thus, we can deduce $\ell_1=0$. Hence, integrating $z''(r)$ twice over $[r,+\infty)$, we obtain
	\begin{align*}
		z'(r) &= -\int_r^\infty O\bigl(e^{(\nu-\kappa)\rho}\bigr) \, d\rho = O\bigl(e^{(\nu-\kappa)r}\bigr), \\
		z(r) &= -\int_r^\infty z'(\rho) \, d\rho = O\bigl(e^{(\nu-\kappa)r}\bigr).
	\end{align*}
This yields $q(r) = e^{-\nu r}z(r)=O\bigl(e^{-\kappa r}\bigr)$ as $r\to+\infty$. Furthermore, $q'(r) = e^{-\nu r}(z'(r) - \nu z(r))= O(e^{-\kappa r})$ as $r\to+\infty$.
	
	By the definition of $q$, we obtain
	\begin{align*}
		u(r) = u_\infty e^{-\nu r} + O(e^{-\kappa r}), \quad u'(r) = -\nu u_\infty e^{-\nu r} + O(e^{-\kappa r})\qquad\text{as }r\to+\infty,
	\end{align*}
	which completes the proof.
\end{proof}
Now we consider the following transformation
\begin{align*}
   s=\ln\coth\frac{r}{2},\qquad w(s)=s^{-\nu}u(r).
\end{align*}
Let $\alpha=2^{-\nu}u_\infty$, where $u_\infty$ is defined in \eqref{24}. Since $u$ satisfies \eqref{24}, we see $w(s)\to\alpha$ as $s\to0^+$. Moreover, 
\begin{align*}
    w'(s)=-s^{-\nu-1}\left(\nu u(r)+\frac{s}{\sinh s}u'(r)\right).
\end{align*}
Combining Lemma~\ref{lem4} with the expansion 
\begin{align*}
\frac{s}{\sinh s}=1-\frac{s^2}{6}+O(s^{4})\qquad\text{as }s\to0^+,
\end{align*}
we get 
\begin{align*}
\begin{aligned}
    w'(s)&=-s^{-(\nu+1)}\left[\nu\left(u_\infty e^{-\nu r}+O(e^{-\kappa r})\right)+\frac{s}{\sinh s}\left(-\nu u_\infty e^{-\nu r}+O(e^{-\kappa r})\right)\right]\\
    &=-s^{-(\nu+1)}\biggl[\nu u_\infty e^{-\nu r}\biggl(\frac{s^2}{6}+O\bigl(s^4\bigr)\biggr)+O(e^{-\kappa r})\biggr]\\
    &=-s^{-(\nu+1)}\left(O\bigl(s^{2+\nu}\bigr)+O(s^{\kappa})\right)\\
    &=O(s)+O\bigl(s^{\kappa-(\nu+1)}\bigr).
   \end{aligned} 
\end{align*}
Since $\nu+2-(\nu+1)=1$, $p\nu-(\nu+1)=(p-1)\nu-1\ge\frac{2\lzero}{N-2}-1>0$, we obtain $w'(s)\to0$ as $s\to0^+$.

 Substituting $w(s)$ into \eqref{eq:radial}, we see $w$ satisfies
\begin{equation}\label{KT}
\begin{cases}
w''+A(s)w'+B(s)w+D(s)w^p=0, & s>0,\\
w(0)=\alpha>0,\quad w'(0)=0,
\end{cases}
\end{equation}
where 
\begin{align*}
  A(s)&=\frac{2\nu}{s}+(2-N)\coth s,\\
B(s)&=\frac{\nu(\nu-1)}{s^2}+\frac{(2-N)\nu}{s}\coth s+\frac{\lambda}{(\sinh s)^2},
\end{align*}
and $D(s)=s^{\nu(p-1)}(\sinh s)^{-2}$. The asymptotic expansion near $0^+$ yields
\begin{align}
    A(s)&=\frac{2\nu+2-N}{s}+\frac{2-N}{3}s+O(s^3),\label{18}\\
      D(s)&=s^{\nu(p-1)-2}-\frac{s^{\nu(p-1)}}{3}+O(s^{\nu(p-1)+2}),\label{19}\\
     B(s)&=\frac{\nu(\nu-1)+(2-N)\nu+\lambda}{s^2}+\frac{(2-N)\nu-\lambda}{3}+O(s^2)=\frac{(2-N)\nu-\lambda}{3}+O(s^2),\label{20}
\end{align}
where the last equality follows from \eqref{eigen}. Define
\begin{align*}
I(s)=s^{2\nu}(\sinh s)^{2-N}.
\end{align*}
Equation \eqref{KT} can be written as
\begin{align*}
(Iw')'
=-I\bigl(Bw+Dw^p\bigr).
\end{align*}
Notice that $\nu(p-1)-2>-1$ since $p\ge\frac{N}{N-2}$. By a standard contraction argument, combining with \eqref{18}--\eqref{20}, we obtain that for each $\alpha>0$, \eqref{KT} admits a unique local positive solution $w_{\alpha}\in C([0,\varepsilon))\cap C^2((0,\varepsilon))$ for some $\varepsilon>0$. Moreover, we get $I(s)w'_\alpha(s)\to0$ as $s\to0^+$. By L'H\^{o}pital's rule, we get
\begin{align*}
	w_\alpha'(s)
=-\frac{1}{I(s)}
\int_0^s I(\rho)
\bigl[B(\rho)w_\alpha
+D(\rho)w_\alpha^p\bigr]\,d\rho\to0\qquad\text{as } s\to0^+.
\end{align*}

To prove the existence of positive singular solutions of \eqref{eq:radial} satisfying fast-decay condition, it suffices to show that \eqref{KT} admits global positive solutions that blow up at infinity at a certain rate. For this, we first study the existence and asymptotic behavior of positive solutions to the linear problem associated with \eqref{KT}
\begin{equation}\label{linear}
\begin{cases}
    (I\Phi')'=-IB\Phi, & s>0,\\
    \Phi(0)=1,\quad \Phi'(0)=0.
    \end{cases}
\end{equation}
\begin{lem}\label{lem6}
Let $N\ge3$ and $\lambda\le\lzero^2$. Then \eqref{linear} has a unique global positive solution $\Phi$. Moreover, there exists a constant $\bar{c}>0$ such that
\begin{equation}\label{53}
    \Phi(s)= 
    \begin{cases}    
   \bar{c}s^{-\nu}e^{(N-2)s}+O\left(s^{-\nu}\right)\quad&\text{if }N=3,\\
    \bar{c}s^{-\nu}e^{(N-2)s}+O\left(s^{1-\nu}\right)\quad&\text{if }N=4,\\
    \bar{c}s^{-\nu}e^{(N-2)s}+O\left(s^{-\nu}e^{(N-4)s}\right)\quad&\text{if }N\ge5,
    \end{cases}
\end{equation}
as $s\to+\infty$.
\end{lem}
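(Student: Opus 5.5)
The plan is to undo the change of variables, so that \eqref{linear} becomes a linear ODE in the original variable $r$. Since $w=s^{-\nu}u$ and $s=\ln\coth\frac r2$, the linear part of \eqref{KT} is exactly the image of
\begin{equation*}
h''+(N-1)\coth r\,h'+\lambda h=0,\qquad r>0 .
\end{equation*}
Hence $\Phi(s)=s^{-\nu}h(r)$, where $h$ solves this linear equation. The condition $\Phi(0)=1$, $\Phi'(0)=0$ selects the solution with $h(r)\sim 2^{\nu}e^{-\nu r}$ as $r\to+\infty$.

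\emph{Existence and uniqueness.} Local existence and uniqueness of $\Phi$ near $s=0$ follow from the same contraction argument already used for \eqref{KT}, now with $D\equiv0$. On $(0,\infty)$ the equation is linear with smooth coefficients, so $\Phi$ extends uniquely to all $s>0$. The content of the lemma is therefore positivity and the asymptotics as $s\to+\infty$, that is, as $r\to0^+$.

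\emph{Positivity.} I would set $h=(\sinh r)^{-\frac{N-1}{2}}g$, which removes the first-order term:
\begin{equation*}
g''=\Bigl[\lzero^2-\lambda+\frac{(N-1)(N-3)}{4\sinh^2 r}\Bigr]g .
\end{equation*}
For $N\ge3$ and $\lambda\le\lzero^2$ the bracket is nonnegative, so $g$ is convex wherever it is positive. At infinity $g\sim c\,e^{(\lzero-\nu)r}$, so $g$ is bounded. Suppose $h$, hence $g$, vanished at some $r_1$ and were positive on $(r_1,\infty)$. Then $g'(r_1)>0$, and since $g'$ is nondecreasing on $(r_1,\infty)$ we get $g'\ge g'(r_1)>0$ there. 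This forces $g$ to grow at least linearly, contradicting boundedness. Hence $h>0$ and $\Phi>0$ on $(0,\infty)$.

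\emph{Nonzero singular coefficient.} The same argument shows that $h$ cannot be regular at $r=0$. If it were, then $g\to0$ as $r\to0^+$, and convexity again gives $g'\ge\delta>0$ on $(0,\infty)$, contradicting boundedness at infinity. Consequently, in the Frobenius decomposition at the regular singular point $r=0$ (indicial roots $0$ and $2-N$), the coefficient $c$ of the singular branch is positive.

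\emph{Expansion as $r\to0^+$.} Frobenius theory then gives
\begin{equation*}
h(r)=c\,r^{2-N}\bigl(1+O(r^2)\bigr)+O(1)\ \ (N=3),\qquad h(r)=c\,r^{-2}+O\bigl(\ln\tfrac1r\bigr)\ \ (N=4),\qquad h(r)=c\,r^{2-N}+O(r^{4-N})\ \ (N\ge5).
\end{equation*}
For $N=4$ the logarithmic term comes from the roots $0$ and $2-N$ differing by the even integer $2$. For $N\ge5$ the correction $r^{4-N}$ from the singular series dominates the bounded regular branch. This is the step I expect to need the most care: one must check that the log resonance occurs only for $N=4$ at the relevant order. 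I would verify this by writing $h=r^{2-N}\sum a_k r^{k}$ directly, using that the coefficients are even in $r$.

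\emph{Back to $s$.} Finally, I convert to the variable $s$ using $r=2\,\mathrm{artanh}(e^{-s})=2e^{-s}+O(e^{-3s})$. This gives $r^{2-N}=2^{2-N}e^{(N-2)s}\bigl(1+O(e^{-2s})\bigr)$ and $\ln\frac1r=s+O(1)$. Multiplying by $s^{-\nu}$ yields \eqref{53} with $\bar c=2^{2-N}c>0$. The error terms are $O(s^{-\nu})$, $O(s^{1-\nu})$ and $O(s^{-\nu}e^{(N-4)s})$ for $N=3$, $N=4$ and $N\ge5$ respectively.
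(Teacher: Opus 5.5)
Your proposal is correct, and it proves the lemma by a genuinely different route from the paper's.

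The paper stays in the variable $s$ throughout. Positivity is proved separately for $0\le\lambda\le\lzero^2$, where $B<0$, so $\Phi$ is increasing and $\Phi\ge1$. For $\lambda<0$ it uses the monotonicity of $(\sinh r)^{N-1}\overline{w}'$, where $\overline{w}(r)=s^{\nu}\Phi(s)$ is exactly your $h$. The two-term asymptotics come from Levinson's theorem \cite[Theorem~8.1]{CL}, applied to $\widetilde\Phi=s^{\nu}e^{(2-N)s}\Phi$, whose equation is an $O(e^{-2s})$ perturbation of a constant-coefficient equation. The remainder is then sharpened by integrating the divergence form with weight $\widetilde H$, and $\bar c>0$ is again checked case by case.

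You instead pass to the Liouville normal form $g=(\sinh r)^{\lzero}h$. Its potential $\lzero^2-\lambda+\frac{(N-1)(N-3)}{4}\sinh^{-2}r$ is nonnegative; it is the same $a(r)$ the paper uses later in Lemma~\ref{lem3}. So one convexity-plus-boundedness argument gives both positivity and the nonvanishing of the singular coefficient for every $\lambda\le\lzero^2$, with no case split.

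Because $r=0$ is a regular singular point with analytic even coefficients, Frobenius theory then gives convergent expansions in which the three error regimes are transparent. There is no logarithm for odd $N$. For even $N$ the resonance logarithm sits at order $r^{0}$, so it exceeds the $O(r^{4-N})$ bound only when $N=4$.

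Your route is more elementary and more informative. For instance, for $N=4$ the coefficient of $\ln r$ in $h$ works out to $c(2-\lambda)/2$, so the $O(s^{1-\nu})$ term is really present unless $\lambda=N(N-2)/4$. The paper's asymptotic-integration argument does not rely on analyticity, and it directly yields the bounds on $\widetilde\Phi'$ reused in \eqref{40}. Your series give those bounds too, by termwise differentiation.

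One small slip: when you exclude a regular $h$, convexity with $g(0^+)=0$ gives $g'\ge g(r_0)/r_0>0$ only on $[r_0,\infty)$ for a fixed $r_0>0$, not on all of $(0,\infty)$; indeed $g'(0^+)=0$ when $N\ge4$. That weaker statement is all the contradiction needs.
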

\begin{proof}
	By a standard argument, together with \eqref{18} and \eqref{20}, we see that \eqref{linear} has a unique global solution $\Phi\in C([0,+\infty))\cap C^{2}((0,+\infty))$. Moreover, we can obtain $\Phi'(s)=O(s)$ and $\Phi(s)=1+O(s^2)$ as $s\to0^+$.
	
	We first prove the positivity of $\Phi$. Suppose first that
	$0\le\lambda\le\lzero^2$, so that \(\mu=\lambda/\nu\ge0\).
	For \(s>0\),
	\begin{align*}
		\frac{s^2B(s)}{\nu}=
		\nu-1+(2-N)s\coth s
		+\mu\frac{s^2}{\sinh^2s}<
		\nu-1+2-N+\mu=0,
	\end{align*}
since $s\coth s >1$ and $s^2\sinh^{-2}s<1$ for $s>0$. Since $I(s)\to0$ as $s\to0^+$ and $\Phi'(0)=0$, integrating \eqref{linear} over $[0,s]$, we have
	\begin{align*}
		I(s)\Phi'(s)=-\int_{0}^{s}I(\rho)B(\rho)\Phi(\rho)\,d\rho,
	\end{align*}
	which implies that $\Phi'(s)>0$ whenever $\Phi(s)>0$. Hence, $\Phi(s)>0$ for $s\ge0$. Otherwise, there exists $s_0>0$ such that $\Phi(s_0)=0$ and $\Phi(s)>0$ in $(0,s_0)$. It follows that $\Phi'(s)>0$ for $s\in(0,s_0)$, which contradicts $\Phi(s_0)=0$.
	
	Now suppose $\lambda<0$. Set $\overline{w}(r)=s^\nu\Phi(s)$. Then $\overline{w}$ satisfies
	\begin{equation}\label{61}
			\overline{w}''+(N-1)\coth r\,\overline{w}'+\lambda \overline{w}=0.
	\end{equation}
Moreover, we have
\begin{equation}\label{62}
\overline{w}(r)=2^\nu e^{-\nu r}(1+o(1)),\quad \overline{w}'(r)=-\nu2^\nu e^{-\nu r}(1+o(1))\qquad\text{as }r\to+\infty.
\end{equation}
This implies $\overline{w}(r)>0$ and $\overline{w}'(r)<0$ for $r$ large enough. It suffices to prove the positivity of $\overline{w}$. Suppose by contradiction that there exists $r_0>0$ such that $\overline{w}(r_0)=0$ and $\overline{w}>0$ on $(r_0,+\infty)$. Then $\overline{w}'(r_0)>0$. Equation \eqref{61} can be rewritten as
\begin{equation}\label{63}
	\bigl((\sinh r)^{N-1}\overline{w}'\bigr)'=-\lambda(\sinh r)^{N-1}\overline{w}.
	\end{equation}
Since $\lambda<0$, we have $\overline{w}'>0$ on $(r_0,+\infty)$, which is a contradiction. Thus, $\overline{w}(r)>0$ for $r\ge0$, i.e., $\Phi$ is positive on $(0,+\infty)$. 
	
	We next obtain the asymptotic behavior at infinity of $\Phi$. Set  $\widetilde{\Phi}(s)=s^{\nu}e^{(2-N)s}\Phi(s)$. Then $\widetilde{\Phi}$ satisfies 
	\begin{equation}\label{27}
		\widetilde{\Phi}''+[(N-2)+(N-2)(1-\coth s) \big]\widetilde{\Phi}'+\left[(N-2)^2(1-\coth s)+\frac{\lambda}{\sinh^2s} \right]\widetilde{\Phi}=0.
	\end{equation}
	Since $1-\coth s=O(e^{-2s})$, $(\sinh s)^{-2}=O(e^{-2s})$ as $s\to+\infty$, it follows from \cite[Theorem~8.1]{CL} that there exist constants $\bar{c}$ and $C_4$ such that 
	\begin{equation}\label{28}
		\widetilde{\Phi}(s)=\bar{c}(1+o(1))+C_4e^{-(N-2)s}(1+o(1))\qquad\text{as }s\to+\infty.
	\end{equation}
	Consequently, 
	\begin{align*}
		\Phi(s)=\bar{c}s^{-\nu}e^{(N-2)s}(1+o(1))+C_4s^{-\nu}(1+o(1))\qquad\text{as }s\to+\infty.
	\end{align*}
Positivity of $\Phi$ implies $\bar{c}\ge0$. For $\lambda\in[0,\lzero^2]$, $\Phi(s)\ge1$ for any $s\ge0$ guarantees $\bar{c}>0$. For $\lambda<0$, if $\bar{c}=0$, then $\overline{w}(r)\to C_4\ge0$ as $r\to0^+$. Integrating \eqref{63} and using L'H\^{o}pital's rule, we get $\overline{w}'(0)=0$. If $C_4=0$, by uniqueness, we obtain $\overline{w}\equiv0$, which is impossible. If $C_4>0$, it follows from \eqref{63} that $\overline{w}'(r)>0$ for $r>0$, which contradicts \eqref{62}. Thus, $\bar{c}>0$.

Finally, we give a precise asymptotic behavior at infinity of $\widetilde{\Phi}$. Set $\widetilde H(s)=
	\frac{e^{2(N-2)s}}{(\sinh s)^{N-2}}$. Equation \eqref{27} becomes
	\begin{equation}\label{29}
		(\widetilde H\widetilde\Phi')'
		=-\widetilde H\left[(N-2)^2(1-\coth s)
		+\frac{\lambda}{\sinh^2s}\right]\widetilde\Phi.
	\end{equation}
	By \eqref{28}, we have 
	\begin{equation}\label{30}
		\widetilde H(s)\left[(N-2)^2(1-\coth s)+\frac{\lambda}{\sinh^2s} \right]\widetilde{\Phi}(s)=O\bigl(e^{(N-4)s}\bigr)
		\quad\text{as }s\to+\infty.
	\end{equation}
	Integrating \eqref{29} from a fixed large \(S\) to \(s\), we obtain
	\begin{equation}\label{31}
		\widetilde H(s)\widetilde{\Phi}'(s)= \widetilde H(S)\widetilde{\Phi}'(S)-\int_{S}^{s}\widetilde H(\rho)\left[(N-2)^2(1-\coth\rho)+\frac{\lambda}{\sinh^2\rho} \right]\widetilde{\Phi}(\rho)\,d\rho.
	\end{equation}
	Integrating the above identity from $s$ to $+\infty$, we get
	\begin{align*}
		\widetilde{\Phi}(s)=
	\begin{cases}
		\bar{c}+O(e^{-s})\quad&\text{if }N=3,\\
		\bar{c}+O(se^{-2s})\quad&\text{if }N=4,\\
		\bar{c}+O(e^{-2s})\quad&\text{if }N\ge5.
		\end{cases}
	\end{align*}
	Thus, \eqref{53} holds.
\end{proof}

Define
\begin{align*}  T(s)=\int_{s}^{+\infty}\frac{d\rho}{I(\rho)\Phi^2(\rho)},\qquad s>0.
\end{align*}
It follows from Lemma~\ref{lem6} that $T\in C^2((0,+\infty))$ is well-defined and $T(s)>0$ for $s>0$. We introduce the following transformation 
\begin{align*}
	w(s)=\Phi(s)\widetilde{w}(\tau(s)),\qquad\tau(s)=[(N-2)T(s)]^{\frac{1}{2-N}}.
	\end{align*}
Then $\tau\in C^2((0,+\infty))$ is positive, and
\begin{align*}
	\tau'(s)=\frac{\tau^{N-1}(s)}{I(s)\Phi^2(s)}>0,\qquad\forall s>0.
\end{align*}
Set $d_\lambda=\sqrt{\lzero^2-\lambda}$. We discuss the asymptotic behavior of $\tau$ by considering two cases.

\textbf{Case 1.} $\lambda<\lzero^2$. Then $d_\lambda>0$. Observe that
\begin{align*}
I(s)\sim s^{1+2d_\lambda},\quad T(s)\sim\frac{s^{-2d_\lambda}}{2d_\lambda}\qquad\text{as }s\to0^+.
\end{align*}
By Lemma~\ref{lem6}, we have
\begin{alignat}{2}
	\tau(s)&\sim \left( \frac{N - 2}{2d_\lambda} \right)^{-\frac{1}{N-2}} s^{\frac{2d_\lambda}{N-2}},
	&\qquad& \text{as } s \to 0^+, \label{66}\\
	\tau(s)&\sim 2\bar{c}^{\frac{2}{N-2}} e^s,
	&\qquad& \text{as } s \to +\infty.\label{67}
\end{alignat}
 Hence, $\tau:(0,+\infty)\to(0,+\infty)$ is a bijection. Thus, $\widetilde{w}$ satisfies the following equation
\begin{align}\label{linear2}
	\begin{cases}
		\widetilde{w}''+\frac{N-1}{\tau}\widetilde{w}'+K(\tau)\widetilde{w}^{p}=0, & \tau>0,\\
		\widetilde{w}(0)=\alpha,\quad 	\widetilde{w}'(0)=0,
	\end{cases}
\end{align}
where 
\begin{align*}
	K(\tau)=\frac{D(s)I^{2}(s)\Phi^{p+3}(s)}{\tau^{2(N-1)}}.
\end{align*}
 It is easy to verify that $K\in C^1((0,+\infty))$ and $K(\tau)>0$ for $\tau>0$, since $D,I,\Phi\in C^2((0,+\infty))$ are positive on $(0,+\infty)$. By \eqref{66}, we get
\begin{equation}\label{54}
K(\tau)=\frac{s^{\nu(p+3)}\Phi^{p+3}(s)}{(\sinh s)^{2N-2}\tau^{2(N-1)}}=O\bigl(\tau^{l_1}\bigr)\qquad\text{as }\tau\to0^+,
\end{equation}
where $l_1=\frac{\nu(p-1)(N-2)}{2d_\lambda}-2$. Since $p\ge\frac{N}{N-2}$,
\begin{align*}
	l_1=\frac{\nu(p-1)(N-2)}{2d_\lambda}-2\ge\frac{\nu}{d_\lambda}-2>-1,
\end{align*}
which implies $\tau K\in L^{1}(0,1)$. 
 
 \textbf{Case 2.} $\lambda=\lzero^2$. Then $d_\lambda=\sqrt{\lzero^2-\lambda}=0$. In this case, there exists a constant $C_{T}$ such that
\begin{align*}
I(s)=s+O(s^3),\quad T(s)=\ln\frac{1}{s}+C_{T}+O(s^2),\qquad\text{as }s\to0^+.
\end{align*}
Using Lemma~\ref{lem6} again, we have
\begin{alignat}{2}
	\tau(s)&\sim\biggl((N-2)\ln\frac{1}{s}\biggr)^{\frac{1}{2-N}} ,
	&\qquad& \text{as } s \to 0^+, \label{68}\\
	\tau(s)&\sim 2\bar{c}^{\frac{2}{N-2}} e^s,
	&\qquad& \text{as } s \to +\infty.\label{69}
\end{alignat}
Hence, $\tau:(0,+\infty)\to(0,+\infty)$ is a bijection. Moreover, by the definition of $\tau$ and the above expansion of $T$, we obtain $s\sim e^{C_T} e^{-\frac{\tau^{2-N}}{N-2}}$ as $s\to0^+$. Hence,
\begin{equation}\label{65}
		K(\tau)\sim e^{\nu(p-1)C_{T}}\tau^{2-2N}e^{\frac{-\nu(p-1)}{N-2}\tau^{2-N}}\to0\qquad\text{as }\tau\to0^+,
\end{equation}
which implies $\tau K\in L^1(0,1)$.

In both cases, we conclude that $K$ satisfies $(\mathrm{K_1})$ (see the Appendix). Hence, a standard fixed-point argument shows that, for every $\alpha>0$, \eqref{linear2} admits a unique local positive solution $\widetilde{w}_{\alpha}\in C([0,\varepsilon))\cap C^2((0,\varepsilon))$ for some $\varepsilon>0$ in both cases. Moreover, we can obtain $\tau^{N-1}\widetilde{w}_\alpha'(\tau)\to0$ as $\tau\to0^+$. By \eqref{54} and \eqref{65}, L'H\^{o}pital's rule gives
\begin{align*}
	\lim_{\tau\to0^+}\widetilde w_\alpha'(\tau)=-\lim_{\tau\to0^+}\int_0^\tau
	(\rho/\tau)^{N-1}K(\rho)\widetilde w_\alpha^p(\rho)d\rho=0.
\end{align*}

Using the Euclidean weighted theory developed in \cite{L1,YY}, we establish the existence and asymptotic behavior at infinity of positive solutions to \eqref{KT} for $\frac{N}{N-2}<p<\frac{N+2}{N-2}$ and $p=\frac{N}{N-2}$, respectively.

\begin{lem}\label{lem7}
Let $N\ge3$, $\frac{N}{N-2}<p<\frac{N+2}{N-2}$, and $\lambda\le\lzero^2$. Then, there exists $\alpha_*>0$ such that for every
$\alpha\in(0,\alpha_*)$, \eqref{KT} admits a unique positive solution $w_\alpha$ satisfying
\begin{align*}
    w_\alpha(s)\sim\left[
    \frac{p(N-2)-N}{2(p-1)^2}
    \right]^{\frac1{p-1}}s^{-\nu}e^{\frac{2}{p-1}s}\qquad\text{as }s\to+\infty.
\end{align*}
\end{lem}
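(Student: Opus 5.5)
The plan is to push the problem through the transformation $w(s)=\Phi(s)\widetilde w(\tau(s))$ into the Euclidean weighted problem \eqref{linear2}, and then invoke the weighted Euclidean theory of the Appendix (from \cite{L1,YY}). That theory gives global positive solutions with slow decay $\widetilde w(\tau)\sim c\,\tau^{-m}$, $m=\frac{2}{p-1}$, provided $K$ satisfies suitable structural conditions.

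First, I would record the behaviour of $K$ at both ends. Near $\tau=0$, $(\mathrm{K_1})$ is already established. Near $\tau=+\infty$, i.e.\ $s\to+\infty$, I would combine Lemma~\ref{lem6} ($\Phi\sim\bar c s^{-\nu}e^{(N-2)s}$), \eqref{67} ($\tau\sim 2\bar c^{2/(N-2)}e^s$), $I(s)=s^{2\nu}(\sinh s)^{2-N}$ and $D(s)=s^{\nu(p-1)}(\sinh s)^{-2}$. The powers of $s$ cancel: the count is $\nu(p-1)+4\nu-\nu(p+3)=0$. The exponentials also cancel: $-2+2(2-N)+(N-2)(p+3)-2(N-1)=(N-2)(p-1)-4+\dots$, and after carefully using $\tau\sim ce^s$ one should get $K(\tau)\sim K_\infty\tau^{\ell}$ with an explicit exponent $\ell$ and constant $K_\infty>0$.

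The target asymptotics then dictate what is needed. Since $w\sim\Phi\,\widetilde w(\tau)\sim \bar c s^{-\nu}e^{(N-2)s}\cdot c\,\tau^{-\beta}$ must equal $s^{-\nu}e^{ms}$, we need $\beta=N-2-m$. This is precisely the Euclidean decay exponent $\tau^{-(2+\ell)/(p-1)}$ when $K\sim K_\infty\tau^\ell$. I would verify that $\ell$ obeys $(2+\ell)/(p-1)=N-2-m$, i.e.\ $\ell=(N-2)(p-1)-4$. I would also check that the Appendix's subcritical/Serrin-type hypotheses ($\frac{N+\ell}{N-2}<p<\frac{N+2+2\ell}{N-2}$, appropriately weighted) hold exactly for $\frac{N}{N-2}<p<\frac{N+2}{N-2}$. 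I would also confirm $K$ has the monotonicity or Pohozaev-type sign condition $(\mathrm{K_2})$ required in the Appendix, together with $\tau K'/K$ bounded.

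Next, I would apply the Appendix existence result: there is $\alpha_*>0$ such that for $\alpha\in(0,\alpha_*)$ the solution $\widetilde w_\alpha$ of \eqref{linear2} is positive on $(0,\infty)$ and slowly decaying, with $\tau^{(2+\ell)/(p-1)}\widetilde w_\alpha(\tau)\to L_K$. Here $L_K$ is determined by plugging the power ansatz into $\widetilde w''+\frac{N-1}{\tau}\widetilde w'+K_\infty\tau^\ell\widetilde w^p=0$. Uniqueness of $w_\alpha$ follows from uniqueness of the local fixed-point solution of \eqref{KT}. Finally I would transfer back, obtaining $w_\alpha(s)\sim \bar c\,L_K(2\bar c^{2/(N-2)})^{-(N-2-m)}s^{-\nu}e^{ms}$, and check that the constant collapses to $\bigl[\frac{p(N-2)-N}{2(p-1)^2}\bigr]^{1/(p-1)}$. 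A direct sanity check supports this: substituting $w=Cs^{-\nu}e^{ms}$ into \eqref{KT} with $A\to 2-N$, $B\to0$, $D\sim s^{\nu(p-1)}4e^{-2s}$ gives $m^2-(N-2)m+4C^{p-1}\cdot$ (leading) $=0$, i.e.\ $C^{p-1}=\frac{m(N-2-m)}{4}=\frac{p(N-2)-N}{2(p-1)^2}\cdot\frac{(p-1)^2}{\dots}$, which should match after simplification.

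The main obstacle is making the constants match exactly through $\tau$'s asymptotics. The factor $2\bar c^{2/(N-2)}$ and $K_\infty$ must cancel the $\bar c$ in $\Phi$. This requires the sharp form \eqref{53}, since only $1+o(1)$ accuracy is needed but uniformly, and a careful check that the Appendix hypotheses on $K$ at infinity (rate of convergence $K(\tau)\tau^{-\ell}\to K_\infty$) hold; the error terms in \eqref{53} supply this.
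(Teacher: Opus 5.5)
Your route is the paper's: reduce to \eqref{linear2}, show $K(\tau)\sim k_\infty\tau^{\ell}$ with $\ell=(N-2)p-(N+2)$, use Yanagida--Yotsutani for slowly decaying solutions at small $\alpha$, use Li for the rate, and pull back. Your constant bookkeeping also closes: the factors of $2$ and $\bar c$ coming from $\Phi$, $\tau$ and $k_\infty=(2^{N-2}\bar c)^{\frac{N+2}{N-2}-p}$ combine to $4^{-1/(p-1)}$. However, the verification of the hypotheses, which is where the proof actually lives, is wrong or missing.

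First, the criterion you propose to check is reversed. Lemma~\ref{lem8} needs $\liminf_{\tau\to+\infty}G(\tau)<0$. Lemma~\ref{lem9}(i) gives this when $\ell<\frac{(N-2)p-(N+2)}{2}$, i.e.\ $p>\frac{N+2+2\ell}{N-2}$, which is the weighted \emph{supercritical} side. With your $\ell$ one has $\frac{N+2+2\ell}{N-2}=2p-\frac{N+2}{N-2}$. So your inequality $p<\frac{N+2+2\ell}{N-2}$ is equivalent to $p>\frac{N+2}{N-2}$ and fails on the whole range. The correct one reduces to $\ell<0$, i.e.\ $p<\frac{N+2}{N-2}$. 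Also, $(\mathrm{K_2})$ is not a sign or monotonicity condition. It is the integrability $\tau^{N-1-(N-2)p}K\in L^1(1,\infty)$, which holds because this function is $\sim k_\infty\tau^{-3}$.

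Second, Yanagida--Yotsutani only give $\tau^{N-2}\widetilde w_\alpha(\tau)\to+\infty$, not $\tau^{(2+\ell)/(p-1)}\widetilde w_\alpha\to L_K$. The rate comes from Lemma~\ref{lem11}, which applies because $\frac{2+\ell}{p-1}=N-2-m\in\bigl(0,\frac{N-2}{2}\bigr)$ precisely for $\frac{N}{N-2}<p<\frac{N+2}{N-2}$. It gives the dichotomy ``explicit constant or $0$''; the value $0$ would force $\tau^{N-2}\widetilde w_\alpha$ to have a finite limit, contradicting slow decay.

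Its hypothesis is $\bigl[\frac{d}{d\tau}(\tau^{-\ell}K)\bigr]^{+}\in L^1$ near infinity. This is a condition on the derivative, not on how fast $\tau^{-\ell}K\to k_\infty$. It cannot be read off from \eqref{53}, which contains no information on $\Phi'$ or $T'$, and an asymptotic expansion cannot in general be differentiated termwise. Supplying it is the main work in the paper's proof:
\begin{enumerate}[label=\textnormal{(\alph*)}]
\item From \eqref{27} and its integrated form \eqref{31} one gets $\widetilde\Phi'/\widetilde\Phi,\ \widetilde\Phi''/\widetilde\Phi=O(e^{-\bar\sigma s})$ for some $\bar\sigma>0$.
\item Integrating by parts in $T(s)=\int_s^{\infty}(I\Phi^2)^{-1}$ gives $T'/T=2-N+O(e^{-\bar\sigma s})$ and $\tau'/\tau=1+O(e^{-\bar\sigma s})$.
\item The logarithmic derivative of $\tau^{-\ell}K(\tau(s))$ is an explicit combination of $D'/D$, $I'/I$, $\Phi'/\Phi$ and $T'/T$. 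Inserting the above estimates yields $\frac{d}{d\tau}(\tau^{-\ell}K)=O(e^{-(1+\bar\sigma)s})$, which is integrable in $\tau$.
\end{enumerate}
Without this derivative control your argument does not reach the stated asymptotics.
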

\begin{proof}
	By the preceding argument, for every $\alpha>0$, \eqref{linear2} admits a unique local positive solution. We first prove that there exists $\alpha_*>0$ such that for every $\alpha\in(0,\alpha_*)$, \eqref{linear2} admits a unique positive solution $\widetilde{w}_\alpha$ satisfying $\tau^{N-2}\widetilde{w}_\alpha(\tau)\to+\infty$ as $\tau\to+\infty$. Lemma~\ref{lem6} yields
	\begin{align*}
		T(s)\sim \frac{e^{(2-N)s}}{(N-2)2^{N-2}\bar{c}^2},\quad\tau(s)\sim 2\bar{c}^{\frac{2}{N-2}}e^s\qquad \text{as }s\to+\infty.
		\end{align*}
		 It follows that
	\begin{equation}\label{55}
		K(\tau)\sim k_{\infty}\tau^{l_2}\qquad\text{as }\tau\to+\infty,
	\end{equation}
	where $k_{\infty}=\left(2^{N-2}\bar{c}\right)^{\frac{N+2}{N-2}-p}$ and $l_2=p(N-2)-(N+2)$. Since $\frac{N}{N-2}<p<\frac{N+2}{N-2}$, we have $l_2\in(-2,0)$ and $l_2=p(N-2)-(N+2)<\frac{p(N-2)-(N+2)}{2}$. Furthermore, we have
\begin{equation}\label{56}
\tau^{N-1-(N-2)p}K(\tau)\sim k_\infty\tau^{-3}\in L^1(1,+\infty),
\end{equation}
which implies $K$ satisfies $(\mathrm{K_2})$. Hence, using \eqref{55} and Lemma~\ref{lem9}, we see $\liminf_{\tau\to+\infty}G(\tau)\le\limsup_{\tau\to+\infty}G(\tau)<0$, where $G$ is defined in Appendix. It follows from Lemma~\ref{lem8} that there exists $\alpha_{*}$ such that for every $\alpha\in(0,\alpha_*)$, \eqref{linear2} admits a unique global positive solution $\widetilde{w}_\alpha$ satisfying $\tau^{N-2}\widetilde{w}_\alpha(\tau)\to+\infty$ as $\tau\to+\infty$.

Now we prove the asymptotic behavior at infinity of $\widetilde{w}_{\alpha}$ for $\alpha\in(0,\alpha_*)$. A direct computation shows 
 \begin{align*}
 \frac{D'(s)}{D(s)}=\frac{\nu(p-1)}{s}-2\coth s,\quad \frac{I'(s)}{I(s)}=\frac{2\nu}{s}-(N-2)\coth s,\quad \frac{\tau'(s)}{\tau(s)}=\frac{1}{2-N}\frac{T'(s)}{T(s)},
 \end{align*}
  and
     \begin{equation}\label{39}
         \frac{\Phi'(s)}{\Phi(s)}=\frac{-\nu}{s}+N-2+\frac{\widetilde{\Phi}'(s)}{\widetilde{\Phi}(s)}.
     \end{equation}  
Hence, 
 \begin{equation}\label{eq:K-ln-derivative}
 \begin{aligned}
     \frac{\frac{d}{ds}\left(\tau^{-l_2}K(\tau)\right)}{\tau^{-l_2}K(\tau)}&=\frac{D'(s)}{D(s)}+2\frac{I'(s)}{I(s)}+(p+3)\frac{\Phi'(s)}{\Phi(s)}-\bigl(p(N-2)+N-4\bigr)\frac{\tau'(s)}{\tau(s)}\\
     &=(p+3)(N-2)-2(N-1)\coth s+\left(p+\frac{N-4}{N-2}\right)\frac{T'(s)}{T(s)}+(p+3)\frac{\widetilde{\Phi}'(s)}{\widetilde{\Phi}(s)}.
      \end{aligned}
 \end{equation}
From the proof of Lemma~\ref{lem6}, we see 
\begin{equation}\label{40}
    \frac{\widetilde{\Phi}'(s)}{\widetilde{\Phi}(s)}=O(e^{-\sigma s}),\quad \widetilde{\Phi}'(s)=O(e^{-\sigma s})\qquad\text{as }s\to+\infty,
\end{equation}
where
\begin{align*}
    \sigma=\begin{cases}
        1\quad &\text{if } N = 3, \\
        2-\epsilon\quad &\text{if } N=4,\\
        2\quad &\text{if } N\ge5,
    \end{cases}
\end{align*}
and $\epsilon>0$ is small.
Set $\widetilde{G}(s)=I(s)\Phi^{2}(s)$. Then 
\begin{align*}
    \frac{\widetilde{G}'(s)}{\widetilde{G}(s)}=\frac{I'(s)}{I(s)}+2\frac{\Phi'(s)}{\Phi(s)},
\end{align*}
together with \eqref{39} and \eqref{40}, we see
\begin{align*}
      \frac{\widetilde{G}'(s)}{\widetilde{G}(s)}=N-2+O(e^{-\sigma s})\qquad\text{as }s\to+\infty.
\end{align*}
This also yields
\begin{equation}\label{41}
    \frac{\widetilde{G}(s)}{\widetilde{G}'(s)}=\frac{1}{N-2}+O(e^{-\sigma s})\qquad\text{as }s\to+\infty.
\end{equation}
In addition,
\begin{align*}
\begin{aligned}
    \frac{d}{ds}\frac{\widetilde{G}(s)}{\widetilde{G}'(s)}&=-\biggl(\frac{\widetilde{G}(s)}{\widetilde{G}'(s)}\biggr)^2 \frac{d}{ds} \frac{\widetilde{G}'(s)}{\widetilde{G}(s)}\\
    &=-\biggl(\frac{\widetilde{G}(s)}{\widetilde{G}'(s)}\biggr)^2\biggl[(N-2)\operatorname{csch}^2s+2\frac{\widetilde{\Phi}''(s)}{\widetilde{\Phi}(s)}-2\biggl(\frac{\widetilde{\Phi}'(s)}{\widetilde{\Phi}(s)}\biggr)^2\biggr].
    \end{aligned}
\end{align*}
By \eqref{27} and \eqref{40}, we know 
\begin{align*}
    \frac{\widetilde{\Phi}''(s)}{\widetilde{\Phi}(s)}=O(e^{-\bar{\sigma} s})\qquad\text{as }s\to+\infty,
\end{align*}
where $\bar{\sigma}=\min\{\sigma, 2\}$. Combining this with \eqref{41}, we obtain
\begin{equation}\label{42}
     \frac{d}{ds}\frac{\widetilde{G}(s)}{\widetilde{G}'(s)}=O(e^{-\bar{\sigma}s})\qquad\text{as }s\to+\infty.
\end{equation}
Therefore, from \eqref{41} and \eqref{42}, there holds
      \begin{align*}
      \begin{aligned}
T(s)&=\int_{s}^{+\infty}\frac{dt}{\widetilde{G}(t)}=\int_s^{+\infty} \biggl( \frac{\widetilde{G}(t)}{\widetilde{G}'(t)}\biggr) \frac{\widetilde{G}'(t)}{\widetilde{G}^2(t)} dt \\
    &= \biggl[ -\frac{1}{\widetilde{G}(t)} \frac{\widetilde{G}(t)}{\widetilde{G}'(t)} \biggr]_s^{+\infty} + \int_s^{+\infty} \frac{1}{\widetilde{G}(t)} \frac{d}{dt}\frac{\widetilde{G}(t)}{\widetilde{G}'(t)}dt\\
    &=\frac{1}{(N-2)\widetilde{G}(s)}\bigl(1+O(e^{-\sigma s})\bigr)+O\bigl(e^{(2-N-\bar{\sigma})s}\bigr)\\
    &=\frac{1}{(N-2)\widetilde{G}(s)}\bigl(1+O(e^{-\bar{\sigma}s})\bigr)\qquad\text{as }s\to+\infty.
    \end{aligned}
      \end{align*} 
  This also yields
  \begin{equation}\label{43}
      \frac{T'(s)}{T(s)}=-\frac{1}{\widetilde{G}(s)T(s)}=2-N+O(e^{-\bar{\sigma}s})\qquad\text{as }s\to+\infty,
  \end{equation}
  and 
\begin{equation}\label{44}
    \frac{\tau'(s)}{\tau(s)}=1+O(e^{-\bar{\sigma}s})\qquad\text{as }s\to+\infty.
\end{equation} 
  Substituting \eqref{55}, \eqref{40} and \eqref{43} into \eqref{eq:K-ln-derivative} yields
  \begin{align*}
        \frac{d}{ds}\bigl(\tau(s)^{-l_2}K(\tau)\bigr)=O(e^{-2s})+O(e^{-\bar{\sigma}s})+O(e^{-\sigma s})=O(e^{-\bar{\sigma}s})\qquad\text{as }s\to+\infty.
  \end{align*}
Combining this with \eqref{44}, we obtain
  \begin{align*}
      \frac{d}{d\tau}\bigl(\tau(s)^{-l_2}K(\tau)\bigr)= \frac{d}{ds}\tau(s)^{-l_2}K(\tau)\frac{ds}{d\tau}=O\bigl(e^{-(1+\bar{\sigma})s}\bigr)\qquad\text{as }s\to+\infty,
  \end{align*}
 which implies $\left|\frac{d}{d\tau}\left(\tau^{-l_2}K(\tau)\right)\right|\in L^1(1,+\infty)$. Moreover, since $\frac{N}{N-2}<p<\frac{N+2}{N-2}$, we see $0<\frac{2+l_2}{p-1}<\frac{N-2}{2}$. It follows from Lemma~\ref{lem11} that 
 \begin{align*}
     \lim_{\tau\to+\infty}\tau^{\frac{2+l_2}{p-1}}\widetilde{w}_\alpha(\tau)=\begin{cases}
         \biggl[\frac{\frac{2+l_2}{p-1}\bigl(N-2-\frac{2+l_2}{p-1}\bigr)}{k_\infty}\biggr]^{\frac{1}{p-1}}\qquad\text{or}\\
         0.
     \end{cases}
 \end{align*}
Since $\tau^{N-2}\widetilde{w}_\alpha(\tau)\to+\infty$ as $\tau\to+\infty$, applying Lemma~\ref{lem11} again yields that for every $\alpha\in(0,\alpha_*)$,
\begin{equation}\label{38}
    \widetilde{w}_\alpha(\tau)\sim\Biggl[\frac{\frac{2+l_2}{p-1}\bigl(N-2-\frac{2+l_2}{p-1}\bigr)}{k_\infty}\Biggr]^{\frac{1}{p-1}}\tau^{-\frac{2+l_2}{p-1}}\qquad\text{as }\tau\to+\infty.
\end{equation}

Applying Lemma~\ref{lem6} again, we obtain that for every $\alpha\in(0,\alpha_*)$, \eqref{KT} admits a unique positive solution $w_\alpha(s)=\Phi(s)\widetilde{w}_\alpha(\tau(s))>0$ for $s\ge0$. By \eqref{53}, \eqref{67} and \eqref{38}, we have 
\begin{align*}
	w_\alpha(s)=\Phi(s)\widetilde{w}_\alpha(\tau(s))\sim
\left[
\frac{p(N-2)-N}{2(p-1)^2}
\right]^{\frac1{p-1}}
s^{-\nu}e^{\frac{2}{p-1}s}\qquad\text{as }s\to+\infty,
\end{align*}
which completes the proof. 
   \end{proof}

\begin{lem}\label{lem12}
	Let $N\ge3$, $p=\frac{N}{N-2}$, and $\lambda\le\lzero^2$. Then there exists $\alpha^{*}>0$ such that for every
	$\alpha\in(0,\alpha^{*})$, \eqref{KT} admits a unique positive solution $w_\alpha$ satisfying
	\begin{align*}
		w_\alpha(s)\sim
		\biggl[\frac{(N-2)^2}{8}\biggr]^{\frac{N-2}{2}}
		s^{-\nu-\frac{N-2}{2}}e^{(N-2)s}
		\qquad\text{as }s\to+\infty.
	\end{align*}
\end{lem}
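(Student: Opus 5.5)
Lemma~\ref{lem12} is the Serrin-exponent analogue of Lemma~\ref{lem7}, and the plan is to run the same reduction. We pass to \eqref{linear2} through $w(s)=\Phi(s)\widetilde w(\tau(s))$. Then we apply the appendix results on the weighted Euclidean equation, now at the borderline exponent, where the expected decay of $\widetilde w$ is the fundamental-solution rate with a logarithmic correction rather than a pure power.

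\textbf{Step 1: the weight at infinity.} For $p=\frac{N}{N-2}$, the computation leading to \eqref{55} gives $K(\tau)\sim k_\infty\tau^{l_2}$ with $l_2=p(N-2)-(N+2)=-2$ and $k_\infty=(2^{N-2}\bar c)^{\frac{N+2}{N-2}-p}=(2^{N-2}\bar c)^{\frac{2}{N-2}}$. The derivative estimate obtained from \eqref{eq:K-ln-derivative}, \eqref{40}, \eqref{43} and \eqref{44} gives $\bigl|\frac{d}{d\tau}(\tau^{2}K(\tau))\bigr|\in L^1(1,\infty)$, so $\tau^2K(\tau)\to k_\infty$ with an integrable rate. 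Moreover $\tau^{N-1-(N-2)p}K=\tau^{-1}K\sim k_\infty\tau^{-3}\in L^1(1,\infty)$, so $(\mathrm{K_2})$ holds as in \eqref{56}, and $(\mathrm{K_1})$ was verified above in both cases $\lambda<\lzero^2$ and $\lambda=\lzero^2$. As in Lemma~\ref{lem7}, I would use Lemma~\ref{lem9} to get $\limsup G<0$. Lemma~\ref{lem8} then supplies $\alpha^*>0$ such that for $\alpha\in(0,\alpha^*)$ the solution $\widetilde w_\alpha$ is global, positive, and satisfies $\tau^{N-2}\widetilde w_\alpha(\tau)\to+\infty$. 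The point to check is that Lemmas~\ref{lem8} and \ref{lem9} do not require $l_2>-2$.

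\textbf{Step 2: asymptotics of $\widetilde w_\alpha$.} We need the appendix statement for the critical case $l_2=-2$, i.e.\ the Aviles-type result at infinity, presumably the Serrin-exponent branch of Lemma~\ref{lem11} taken from \cite{L1,YY}. It should give, for slowly decaying solutions,
\begin{align*}
\widetilde w_\alpha(\tau)\sim\Bigl(\frac{(N-2)^2}{2k_\infty}\Bigr)^{\frac{N-2}{2}}\tau^{2-N}(\ln\tau)^{-\frac{N-2}{2}}\qquad\text{as }\tau\to+\infty.
\end{align*}
To check the constant heuristically, write $t=\ln\tau$ and $\widetilde w=\tau^{2-N}y(t)$. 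Then $y''-(N-2)y'+k_\infty y^{p}\approx0$, and the drift balance $(N-2)y'\approx -k_\infty y^{N/(N-2)}$ gives $y\sim\bigl(\frac{(N-2)^2}{2k_\infty t}\bigr)^{\frac{N-2}{2}}$. If the appendix does not cover this case, I would prove it directly. I would use the Emden--Fowler variable $t=\ln\tau$ and show that $y$ is eventually monotone, with $y'/y\to0$, using the energy $\frac12(y')^2+\frac{k(t)}{p+1}y^{p+1}$. An ODE comparison then gives the rate. This step is the main obstacle, because the $\log$ correction requires controlling the perturbation $\tau^2K-k_\infty$ at order $1/t$. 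The integrability of $\frac{d}{d\tau}(\tau^2K)$ with an exponential rate in $s$ should suffice for this.

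\textbf{Step 3: back to $w_\alpha$.} By \eqref{53} and \eqref{67}, $\Phi(s)\sim\bar c s^{-\nu}e^{(N-2)s}$, $\tau\sim2\bar c^{\frac{2}{N-2}}e^{s}$ and $\ln\tau\sim s$. Hence $\tau^{2-N}\sim 2^{2-N}\bar c^{-2}e^{(2-N)s}$, and
\begin{align*}
w_\alpha(s)\sim \bar c^{-1}2^{2-N}\Bigl(\frac{(N-2)^2}{2k_\infty}\Bigr)^{\frac{N-2}{2}}s^{-\nu-\frac{N-2}{2}}e^{(N-2)s}.
\end{align*}
Substituting $k_\infty^{\frac{N-2}{2}}=2^{N-2}\bar c$ gives the constant $2^{2-N}\cdot 2^{-(N-2)/2}\cdot 2^{-(N-2)}(N-2)^{N-2}\cdot\bar c^{-2}$ times the $\bar c$-factor. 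I would redo this bookkeeping carefully until the constant collapses to $\bigl[\frac{(N-2)^2}{8}\bigr]^{\frac{N-2}{2}}$ and all $\bar c$-dependence cancels, as it did in Lemma~\ref{lem7}. Positivity and uniqueness of $w_\alpha$ follow from those of $\widetilde w_\alpha$, $\Phi>0$, and the local uniqueness for \eqref{KT}.
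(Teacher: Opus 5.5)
Your Step~1 is the paper's argument, and Lemma~\ref{lem9}(i) does apply: $l_2=-2<-1=\frac{(N-2)p-(N+2)}{2}$ and $l_2<0=(N-2)p-N$. The gap is in Step~2, which aims at the wrong profile.

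Because $K(\tau)\sim k_\infty\tau^{-2}$, the exponent $\frac{2+l_2}{p-1}$ is zero, and slowly decaying solutions decay only logarithmically. Your claimed $\widetilde w_\alpha\sim c\,\tau^{2-N}(\ln\tau)^{-\frac{N-2}{2}}$ would force $\tau^{N-2}\widetilde w_\alpha(\tau)\to0$. That contradicts the slow decay $\tau^{N-2}\widetilde w_\alpha\to+\infty$ you obtained in Step~1.

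The heuristic fails for two reasons. First, with $\widetilde w=\tau^{2-N}y(t)$ the nonlinear term is $\tau^2K\,e^{-(N-2)(p-1)t}y^p=\tau^2K\,e^{-2t}y^p$, not $k_\infty y^p$. Second, even from your equation $y''-(N-2)y'+k_\infty y^p\approx0$, the drift balance would be $(N-2)y'\approx+k_\infty y^p$, which means growth, not decay. The Aviles profile $r^{2-N}(\ln\frac1r)^{-\frac{N-2}{2}}$ you have in mind describes $u_s$ near the pole. Its factor $r^{2-N}\sim 2^{2-N}e^{(N-2)s}$ is already carried by $\Phi$, so putting $\tau^{2-N}$ into $\widetilde w_\alpha$ counts it twice.

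The missing ingredient is Lemma~\ref{lem13}. Lemma~\ref{lem11} cannot be used here, since it requires $\frac{2+l}{p-1}>0$. The hypothesis of Lemma~\ref{lem13}, $[\frac{d}{d\tau}(\tau^2K)]^+\in L^1$, is exactly what you verified in Step~1. It gives $\lim_{\tau\to\infty}(\ln\tau)^{\frac{N-2}{2}}\widetilde w_\alpha(\tau)\in\bigl\{\bigl[\frac{(N-2)^2}{2k_\infty}\bigr]^{\frac{N-2}{2}},\,0\bigr\}$. The zero alternative is excluded, because it forces $\tau^{N-2}\widetilde w_\alpha$ to have a finite limit.

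Equivalently, with $y(t)=\widetilde w(e^t)$ and no power prefactor, the equation reads $y''+(N-2)y'+\tau^2K\,y^p=0$. The balance $(N-2)y'\approx-k_\infty y^p$ then gives the rate $t^{-\frac{N-2}{2}}$ directly.

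Step~3 then closes without any adjustment. Using $\ln\tau(s)\sim s$ and $k_\infty^{\frac{N-2}{2}}=2^{N-2}\bar c$,
\[
w_\alpha(s)\sim\bar c\,s^{-\nu}e^{(N-2)s}\Bigl[\frac{(N-2)^2}{2k_\infty}\Bigr]^{\frac{N-2}{2}}s^{-\frac{N-2}{2}}
=\Bigl[\frac{(N-2)^2}{8}\Bigr]^{\frac{N-2}{2}}s^{-\nu-\frac{N-2}{2}}e^{(N-2)s},
\]
and the $\bar c$-dependence cancels.

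Your version cannot be fixed by redoing the bookkeeping. You get $\Phi\,\tau^{2-N}\sim2^{2-N}\bar c^{-1}s^{-\nu}$, so the factor $e^{(N-2)s}$ cancels; you kept it by mistake. The result would be $w_\alpha\sim C\,s^{-\nu-\frac{N-2}{2}}$ with $C$ proportional to $\bar c^{-2}$. That has neither the growth rate nor the constant of the statement.
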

\begin{proof}
	As in the proof of Lemma~\ref{lem7}, for every $\alpha>0$, \eqref{linear2} admits a unique local positive solution $\widetilde w_\alpha$. Moreover, \eqref{56} still holds, and so $K$ satisfies $(\mathrm{K_2})$. Notice that, when $p=\frac{N}{N-2}$, 
	\begin{align}\label{70}
		K(\tau)\sim k_\infty\tau^{-2}
		\qquad\text{as }\tau\to+\infty,
	\end{align}
	where $k_\infty=4\bar c^{\frac{2}{N-2}}$. Lemma \ref{lem8} and Lemma \ref{lem9} imply that there exists
	$\alpha^{*}>0$ such that, for every
	$\alpha\in(0,\alpha^{*})$, the corresponding solution
	$\widetilde w_\alpha$ extends to a global positive solution satisfying $\tau^{N-2}\widetilde w_\alpha(\tau)\rightarrow+\infty$ as $\tau\to+\infty$.

	The estimates \eqref{39}--\eqref{44} are independent of the strict inequality
	$p>\frac{N}{N-2}$ and remain valid for $p=\frac{N}{N-2}$. Thus, we also have $\left|\frac{d}{d\tau}\bigl(\tau^2K(\tau)\bigr)\right|\in L^1(1,+\infty)$. Applying Lemma~\ref{lem13} yields
	\begin{align*}
		\lim_{\tau\to+\infty}
		(\ln\tau)^{\frac{1}{p-1}}
		\widetilde w_\alpha(\tau)
		=\begin{cases}
		\biggl[
		\frac{N-2}{(p-1)k_\infty}
		\biggr]^{\frac{1}{p-1}}
		\qquad\text{or}\\
		0.
		\end{cases}
	\end{align*} 
	Since $\tau^{N-2}\widetilde w_\alpha(\tau)\rightarrow+\infty$ as $\tau\to+\infty$, Lemma~\ref{lem13} again yields that for every $\alpha\in(0,\alpha^{*})$, 
	\begin{align*}
		\widetilde w_\alpha(\tau)\sim\biggl[\frac{N-2}{(p-1)k_\infty}\biggr]^{\frac{1}{p-1}}(\ln\tau)^{-\frac{1}{p-1}}\qquad\text{as }\tau\to+\infty.
	\end{align*}
	By \eqref{67} and \eqref{69}, $\ln\tau(s)\sim s$ as $s\to+\infty$. It follows from Lemma~\ref{lem6} that for every $\alpha\in(0,\alpha^{*})$, \eqref{KT} admits a unique positive solution $w_\alpha$ satisfying
	\begin{align*}
		w_\alpha(s)=\Phi(s)\widetilde{w}_\alpha(\tau(s))\sim\biggl[\frac{(N-2)^2}{8}\biggr]^{\frac{N-2}{2}}s^{-\nu-\frac{N-2}{2}}e^{(N-2)s}
		\qquad\text{as }s\to+\infty.
	\end{align*}
	This completes the proof.
\end{proof}

\textbf{Proof of Theorem~\ref{thm1}.} 
Observe that $s\sim2e^{-r}$ as $r\to+\infty$, while $s\sim\ln\frac{2}{r}$ as $r\to0^+$. When $p=\frac{N}{N-2}$, Lemma~\ref{lem12} yields that for every $\alpha\in(0,\alpha^*)$, there exists a positive singular solution $u_s$ of
\eqref{eq:radial} satisfying
\begin{align*}
	u_s(r)=s^\nu w_\alpha(s)\sim
	\biggl(\frac{N-2}{\sqrt{2}}\biggr)^{\frac{N-2}{2}}r^{2-N}\biggl(\ln\frac1r\biggr)^{-\frac{N-2}{2}}\qquad\text{as }r\to0^+,
	\end{align*}
	and
	\begin{equation}\label{71}
	u_s(r)=s^\nu w_\alpha(s)\sim2^\nu\alpha e^{-\nu r}=u_\infty e^{-\nu r}\qquad\text{as }r\to+\infty,
\end{equation}
where $u_\infty=2^\nu\alpha\in(0,2^\nu\alpha^{*})$.

In the interval $\frac{N}{N-2}<p<\frac{N+2}{N-2}$, applying Lemma~\ref{lem7}, we obtain that for every
$\alpha\in(0,\alpha_*)$, \eqref{eq:radial} admits a positive singular solution $u_s$ satisfying
\begin{align*}
	u_s(r)=s^{\nu}w_\alpha(s)\sim Lr^{-\frac{2}{p-1}}\qquad\text{as }r\to0^+,
\end{align*}
and \eqref{71} with $u_\infty\in(0,2^\nu\alpha_{*})$. The proof of Theorem~\ref{thm1} is completed.\qed

\section{Proof of Theorem \ref{thm4}}\label{S3}
In this section, we study the existence of positive singular solutions to \eqref{eq:radial} in the supercritical case and present the proof of Theorem~\ref{thm4}.

Set
\begin{align*}
	z=\ln\sinh r,\qquad W(z)=(\sinh r)^{m}u(r),
	\end{align*}
where $m$ is given in (\ref{aa}).
Then $W$ satisfies
\begin{equation}\label{LE}
    W''+f(z)W'-m(N-2-m)W+\frac{(\lambda-m)e^{2z}}{1+e^{2z}}W+\frac{1}{1+e^{2z}}W^p=0,
\end{equation}
where 
\begin{align*}
    f(z)=N-2-2m+\frac{e^{2z}}{1+e^{2z}}.
\end{align*}
The following lemma describes the asymptotic profile of singular solutions to \eqref{eq:radial} near the origin. This lemma extends the result obtained in \cite[Lemma~3.1]{WCCK}.
\begin{lem}\label{lem10}
    Let $N\ge3$, $p>\frac{N+2}{N-2}$, and $\lambda\le\lzero^2$. Let $u(r)$ be a positive singular solution of \eqref{eq:radial}. Then 
    \begin{equation}\label{57}
        \lim_{r\to0^+}(\sinh r)^{m}u(r)=L.
    \end{equation}
Here $L$ is given by (\ref{aa}).
\end{lem}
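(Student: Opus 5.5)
We work with the Emden--Fowler variables $z=\ln\sinh r$ and $W(z)=(\sinh r)^m u(r)$ introduced above, so that $r\to0^+$ corresponds to $z\to-\infty$. In these variables \eqref{57} says exactly that $W(z)\to L$ as $z\to-\infty$. As $z\to-\infty$, equation \eqref{LE} is an exponentially small perturbation of the autonomous equation
\begin{align*}
W''+(N-2-2m)W'-m(N-2-m)W+W^p=0.
\end{align*}
Since $p>\frac{N+2}{N-2}$, we have $m<\frac{N-2}{2}$, hence $N-2-2m>0$. The plan is to use a Lyapunov-type energy to show that $W$ converges to an equilibrium of this limiting equation, namely $0$ or $L$, and then to exclude $0$ by singularity.

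\textbf{Step 1: a priori bounds.} We first show that $W$ and $W'$ are bounded as $z\to-\infty$. This is the local universal estimate for positive solutions near an isolated singularity: $u(r)\le Cr^{-m}$ and $|u'(r)|\le Cr^{-m-1}$ for small $r$. The lower-order terms $\lambda u$ and $(N-1)(\coth r-\frac1r)u'$ are harmless perturbations near $0$. In the supercritical range this bound can be obtained by a doubling/blow-up argument combined with the Liouville theorem of Gidas--Spruck, or, in the radial setting, by following \cite[Lemma~3.1]{WCCK} and \cite{H} directly. I expect this to be the main obstacle, because the classical Liouville theorem is only available for $p<p_{\mathrm{Sob}}$. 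Our first attempt will be a purely radial ODE argument: since $(\sinh r)^{N-1}u'$ is monotone for small $r$ (because $\lambda u+u^p>0$ there), we get $u'<0$ near $0$. Integrating the equation on $[r/2,r]$ then gives $u(r)^{p-1}r^2\le C$.

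\textbf{Step 2: energy and convergence.} We define
\begin{align*}
E(z)=\tfrac12 W'^2-\tfrac{m(N-2-m)}{2}W^2+\tfrac{1}{p+1}\frac{W^{p+1}}{1+e^{2z}}+\tfrac{(\lambda-m)e^{2z}}{2(1+e^{2z})}W^2.
\end{align*}
A direct computation gives
\begin{align*}
E'=-f(z)W'^2+O(e^{2z})(W^2+W^{p+1}).
\end{align*}
With the bounds from Step 1, the error term is integrable on $(-\infty,0]$. Hence $E$ has a limit as $z\to-\infty$, and $\int_{-\infty}^0 W'^2\,dz<\infty$, since $f\ge N-2-2m>0$. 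Together with the bound on $W''$ obtained from the equation, this yields $W'\to0$. Next, the $\omega$-limit set of $W$ as $z\to-\infty$ is connected and consists of zeros of $-m(N-2-m)W+W^p$, that is, of the points $0$ and $L$. Therefore $W\to0$ or $W\to L$.

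\textbf{Step 3: excluding $W\to0$.} Suppose $W\to0$. Linearizing \eqref{LE}, the characteristic roots at $0$ are $m$ and $-(N-2-m)$, giving $W\sim c_1e^{mz}$ or $W\sim c_2e^{-(N-2-m)z}$. The second mode is unbounded as $z\to-\infty$, so it is excluded. Thus $W=O(e^{mz})$, which means $u=(\sinh r)^{-m}W$ is bounded near $0$. This contradicts $u(r)\to+\infty$. To make this rigorous, we treat $W^p/(1+e^{2z})$ as the perturbation $o(W)$ and apply a variation-of-constants argument, as in Lemma~\ref{lem4}. We conclude that $W\to L$, which is \eqref{57}.
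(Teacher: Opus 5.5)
Your proposal is correct and follows essentially the same route as the paper: the Emden--Fowler variables $z=\ln\sinh r$, $W=(\sinh r)^m u$, a Ni--Serrin-type radial bound $u\le Cr^{-m}$ (your integration argument is exactly that), the same energy $E$ giving $W'\in L^2$, $W'\to0$ and $W\to0$ or $W\to L$, and exclusion of $W\to0$ through the saddle structure at the origin, where the root $m$ forces $u$ to stay bounded. Two points are left implicit: the bound on $W'$, and the behaviour of $W^{p-1}$ in Step~3. The paper proves the $W'$ bound separately; it also follows from $\lim_{r\to0^+}(\sinh r)^{N-1}u'(r)=0$, which is forced by $u\le Cr^{-m}$ and $m<N-2$. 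In Step~3, $W^{p-1}$ is only $o(1)$ a priori, so you need a preliminary exponential decay of $W$ (or a Levinson-type result for $o(|\Psi|)$ perturbations, which the paper takes from \cite{BK}) before concluding $W\sim Ce^{mz}$.
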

\begin{proof}
	Since $p>\frac{N+2}{N-2}$ and $u(r)\to+\infty$ as $r\to0^+$, we have $\lambda u+u^{p}\ge\frac{1}{2}u^{p}$ for $r>0$ sufficiently small. Moreover, $\coth r\sim\frac{1}{r}$ as $r\rightarrow0^+$. Similarly to the proof of Theorem~2.1 in \cite{NS1}, there exist $r_0>0$ and $C>0$ such that $u(r)\le C r^{-m}$ for $r\in(0,r_0)$. Then $W(z) = (\sinh r)^m u(r)$ is bounded on $(-\infty, z_0)$, where $z_0 = \ln\sinh r_0$. 
	 
	 We claim that $W'$ is bounded on $(-\infty,z_0)$. Suppose by contradiction that there exists a sequence $z_n\to-\infty$ such that $|W'(z_n)|\to+\infty$ as $n\to+\infty$. Observe that $0<N-2-2m\le f(z)\le N-1-2m$ since $p>\frac{N+2}{N-2}$. Since $W$ is
	 bounded, the variation-of-constants formula yields that there exists $C>0$ such that
\begin{align*}
	|W'(z)|\ge e^{-(N-1-2m)(z-z_n)}|W'(z_n)|-C(z-z_n),\qquad z\ge z_n.
	\end{align*}
	 Choose $\delta>0$ sufficiently small so that $e^{-(N-1-2m)\delta}\ge\frac34$. Since $|W'(z_n)|\to+\infty$, we have $|W'(z_n)|\ge4C\delta$ for all sufficiently large $n$. Hence, $|W'(z)|\ge\frac12|W'(z_n)|>0$ for every $z\in[z_n,z_n+\delta]$. It follows that
\begin{align*}
	 \begin{aligned}
	 	|W(z_n+\delta)-W(z_n)|=
	 	\biggl|
	 	\int_{z_n}^{z_n+\delta}W'(z)\,dz
	 	\biggr|=
	 	\int_{z_n}^{z_n+\delta}|W'(z)|\,dz\ge
	 	\frac{\delta}{2}|W'(z_n)|\rightarrow+\infty,
	 \end{aligned}
	 \end{align*}
	 which contradicts the boundedness of $W$. Hence, $W'$ is bounded on $(-\infty,z_0]$.
	 
	 We now prove \eqref{57}. Let $f_1(z) = \frac{e^{2z}}{1+e^{2z}}$ and define the energy functional
	 \begin{align*}
	 	E(z) = \frac{1}{2}W'^2(z) - \frac{L^{p-1}}{2}W^2(z) + \frac{\lambda-m}{2}f_1(z)W^2(z) + \frac{1-f_1(z)}{p+1}W^{p+1}(z).
	 \end{align*}
	A simple calculation gives
	 \begin{align*}
	 	E'(z) = -f(z)W'^2(z) + f_1'(z)\biggl[ \frac{\lambda-m}{2}W^2(z) - \frac{1}{p+1}W^{p+1}(z) \biggr].
	 \end{align*}
	 Since $W$, $W'$ and $f_1$ are bounded on $(-\infty, z_0)$, $E$ is bounded on $(-\infty, z_0)$. Notice that $f_1'(z) = \frac{2e^{2z}}{(1+e^{2z})^2}\in L^1(-\infty, z_0)$. Integrating $E'(z)$ over $[z,z_0]$ gives for any $z\le z_0$,
	 \begin{align*}
	 \begin{aligned}
	 	(N-2-2m)\int_z^{z_0}W'^2(\rho)\,d\rho&\le
	 	\int_z^{z_0}f(\rho)W'^2(\rho)\,d\rho\\
	 	&\le E(z)-E(z_0)
	 	+C\int_z^{z_0}f_1'(\rho)\,d\rho
	 \end{aligned}
	 \end{align*}
	 for some $C>0$. This implies $W'\in L^2(-\infty,z_0)$. Equation \eqref{LE} and the boundedness of $W$ and $W'$ imply that
	 $W''$ is bounded,  and so $W'$ is uniformly continuous. It follows that $\lim_{z\to-\infty}W'(z)=0$. Furthermore, $W'\in L^2(-\infty,z_0)$, $f_1'\in L^1(-\infty,z_0)$ and the boundedness of $W$ yield $E'\in L^1(-\infty,z_0)$. This also implies that there exists $E_{-\infty}\in\RR$ such that $\lim_{z\to-\infty} E(z) = E_{-\infty}$. Since $W' \to 0$ and $f_1 \to 0$ as $z \to -\infty$, we get
	 \begin{equation}\label{64}
	 	\lim_{z\to-\infty} \biggl( -\frac{L^{p-1}}{2}W^2(z) + \frac{1}{p+1}W^{p+1}(z) \biggr) = E_{-\infty}.
	 \end{equation}
	 We claim that $\lim_{z\to-\infty} W(z)$ exists. Otherwise, there exist $W^-<W^+$ such that
	 \begin{align*}
	 W^-=\liminf_{z\to-\infty}W(z)
	 <\limsup_{z\to-\infty}W(z)=W^+.
	 \end{align*}
	 The continuity of $W$ implies that every value in
	 $(W^-,W^+)$ is attained by $W$ along some sequence tending to $-\infty$. This contradicts \eqref{64}, since the left-hand side of \eqref{64} is not constant on $(W^-,W^+)$. Thus, $\lim_{z\to-\infty} W(z) = W_{-\infty}$. Passing to the limit in \eqref{LE}, we obtain
	 \begin{align*}
	 	\lim_{z\to-\infty}W''(z)=m(N-2-m)W_{-\infty} - W_{-\infty}^p=\ell_{2}.
	 \end{align*}
	  We claim $\ell_2=0$. Indeed, if $\ell_2\neq0$, then there exists $z_1<z_0$ such that $W''$ has a fixed sign and $|W''(z)|\ge\frac{|\ell_2|}{2}$ for $z\le z_1$. Consequently,
	 \begin{align*}
	 |W'(z_1)-W'(z)|
	 =\biggl|\int_z^{z_1} W''(\rho)\,d\rho\biggr|
	 \ge\frac{|\ell_2|}{2}(z_1-z)
	 \rightarrow+\infty\qquad\text{as }z\to-\infty,
	 \end{align*}
	which contradicts $\lim_{z\to-\infty}W'(z)=0$. Thus $\ell_2=0$, and so $W_{-\infty}\in\{0,L\}$.
	 
It remains to exclude $W_{-\infty}=0$. Suppose that $W_{-\infty}=0$. Set $\Psi=(W,W')^{T}$. Then $\Psi$ satisfies 
\begin{align*}
		\Psi'=\begin{pmatrix}
			0&1\\
			m(N-2-m)&-(N-2-2m)
		\end{pmatrix}\Psi+\begin{pmatrix}
		0&0\\
		f_{1}(W^{p-1}-\lambda+m)&-f_1
		\end{pmatrix}\Psi+\begin{pmatrix}
		0\\
		-W^p
		\end{pmatrix}.
		\end{align*}
		The eigenvalues of the above constant matrix are $m>0$ and $-(N-2-m)<0$, and an eigenvector corresponding to $m$ is $\varphi_1=(1,m)^{T}$. Since $f_1(z)=O(e^{2z})$ as $z\to-\infty$ and $W$ is bounded, we have $f_1,f_1W^{p-1}\in L^{1}(-\infty,z_0)$. By \cite[Lemma~2.1(iii)]{BK}, there exists a constant $C\neq0$ such that $\Psi(z)=Ce^{mz}(1+o(1))\varphi_1$ as $z\to-\infty$. The positivity of $W$ guarantees $C>0$. Hence,
		\begin{align*}
			u(r) = (\sinh r)^{-m}W(z) = e^{-mz}W(z) \to C \qquad \text{as } r \to 0^+,
		\end{align*}
		which implies $u$ is bounded near the origin. This contradicts the nonremovable singularity of $u$. Hence, $W_{-\infty} = L$, and so \eqref{57} holds.
\end{proof}
We next prove the existence and uniqueness of a local positive singular solution of \eqref{eq:radial} using the contraction mapping theorem.
\begin{lem}\label{lem2}
    Let $N\ge3$, $p>\frac{N+2}{N-2}$, and $\lambda\le\lzero^2$. Then \eqref{eq:radial} admits a unique local
    positive singular solution near the origin. Moreover, there exists $r_0>0$ small enough such that $u(r)=L(\sinh r)^{-m}(1+\theta(r))$ solves \eqref{eq:radial} on $(0,r_0)$, where $\theta(r)\in C^2(0,r_0)$ satisfies
     \begin{equation}\label{58}
    		\theta(r)=\frac{L^{p-1}+m-\lambda}{4N-4-6m}(\sinh r)^2+O\left((\sinh r)^{4}\right)\qquad\text{as }r\to0^+,
    \end{equation}
    and
    \begin{equation}\label{59}
    		\theta'(r)=\frac{L^{p-1}+m-\lambda}{4N-4-6m}\sinh 2r+O\left((\sinh r)^{3}\right)\qquad\text{as }r\to0^+.
    \end{equation}
\end{lem}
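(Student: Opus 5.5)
The plan is to work in the Emden--Fowler variables $z=\ln\sinh r$, $W(z)=(\sinh r)^m u(r)$ already introduced, and to write $W=L(1+\theta)$. Put $c=m(N-2-m)=L^{p-1}$ and $f_1=e^{2z}/(1+e^{2z})$. Substituting into \eqref{LE} and dividing by $L$ gives
\begin{align*}
\theta''+(N-2-2m)\theta'+(p-1)c\,\theta=(c+m-\lambda)f_1-f_1\theta'+(c+m-\lambda)f_1\theta-(1-f_1)c\,\mathcal N(\theta)+c f_1 p\,\theta,
\end{align*}
where $\mathcal N(\theta)=(1+\theta)^p-1-p\theta=O(\theta^2)$. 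Everything on the right-hand side is either forced by $f_1=O(e^{2z})$ or quadratic in $\theta$.

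\textbf{Linear operator.} Its characteristic polynomial is $k^2+(N-2-2m)k+2(N-2-m)$, since $(p-1)c=2(N-2-m)$. When $p>\frac{N+2}{N-2}$ we have $0<m<\frac{N-2}{2}$, so the sum of the roots is negative and their product is positive. Hence both roots (possibly complex) have negative real part, and every nonzero homogeneous solution blows up as $z\to-\infty$. This has two consequences.
\begin{itemize}
\item[(a)] The solution decaying at $-\infty$ is given by the one-sided integral $\theta(z)=\int_{-\infty}^z G(z-\zeta)F(\zeta)\,d\zeta$, where $G$ is the fundamental solution, and $|G|+|G'|\le Ce^{-\delta(z-\zeta)}$ for some $\delta>0$.
\item[(b)] The leading term is forced. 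Trying $\theta\approx a e^{2z}$ gives $a\bigl(4+2(N-2-2m)+2(N-2-m)\bigr)=c+m-\lambda$, that is, $a=\frac{L^{p-1}+m-\lambda}{4N-4-6m}$, which matches \eqref{58}. The denominator is nonzero because $6m<3(N-2)<4N-4$.
\end{itemize}

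\textbf{Existence by contraction.} I will set up the contraction in the space $X=\{\theta:\ \|\theta\|=\sup_{z\le z_0}e^{-2z}(|\theta|+|\theta'|)<\infty\}$, on a small ball, with $z_0\ll -1$. Let $\mathcal T\theta$ be the integral operator from (a) applied to the right-hand side $F(\theta)$. The forcing term contributes $O(e^{2z})$, since $\int_{-\infty}^z e^{-\delta(z-\zeta)}e^{2\zeta}d\zeta\le Ce^{2z}$. The terms $f_1\theta$, $f_1\theta'$ and $\mathcal N(\theta)$ contribute $O(e^{4z})\|\theta\|$ or $O(e^{4z})\|\theta\|^2$. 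So for $z_0$ negative enough, $\mathcal T$ maps a ball of $X$ into itself and is a contraction, which gives a fixed point. Positivity of $u$ near $0$ is immediate because $\theta\to0$. Inserting the fixed point back into the formula, the remainder is $O(e^{4z})$, so $\theta=a e^{2z}+O(e^{4z})$, which is \eqref{58} since $e^{2z}=\sinh^2 r$. For the derivative, $\frac{d\theta}{dr}=\coth r\,\frac{d\theta}{dz}$ and $\frac{d\theta}{dz}=2ae^{2z}+O(e^{4z})$, so $\theta'(r)=2a\sinh r\cosh r+O(\sinh^3 r)=a\sinh 2r+O(\sinh^3 r)$, which is \eqref{59}.

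\textbf{Uniqueness.} Let $u$ be any positive singular solution on $(0,r_0)$. By Lemma~\ref{lem10}, $W\to L$, so $\theta\to0$; the proof of that lemma also gives $W'\to0$, so $\theta'\to0$. Writing $\theta$ by variation of constants on $(-\infty,z]$, the homogeneous part must vanish, because both modes blow up at $-\infty$ while $\theta$ stays bounded. Hence $\theta=\mathcal T\theta$. I then bootstrap:
\begin{itemize}
\item First show $\sup_{\zeta\le z}|\theta|+|\theta'|$ is small for $z\le z_0$, so the quadratic terms are absorbed.
\item Then iterate the integral bound to get $|\theta|+|\theta'|=O(e^{\beta z})$ with $\beta$ increasing up to $2$.
\end{itemize}
This places $\theta$ in the small ball of $X$, and uniqueness of the fixed point finishes the argument. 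I expect this bootstrap to be the main obstacle. The difficulty is obtaining the decay rate $e^{2z}$ from mere convergence $\theta\to0$ when the linear part may have complex roots and a rate $\delta$ smaller than $2$. My first attempt will be a Gronwall-type argument on $M(z)=\sup_{\zeta\le z}e^{-\beta\zeta}(|\theta|+|\theta'|)$ with $\beta<\min\{\delta,2\}$, followed by a second pass using the exact kernel to reach the exponent $2$.
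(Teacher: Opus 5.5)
Your existence argument and the derivation of \eqref{58}--\eqref{59} are sound and close to the paper's. Both use variation of constants against the Hurwitz-stable linear part, a contraction, and the leading coefficient $1/P(2)$ with $P(2)=4N-4-6m$, which is the paper's $(2I_2-A)^{-1}e_2$. There are two harmless slips. First, the coefficient of $f_1\theta$ on the right-hand side is $pc+m-\lambda$, not $c+m-\lambda+pc$. Second, in the $e^{2z}$-weighted space the ball cannot in general be small, because the forced term alone has weighted norm of order $|a|$. You should take a ball of fixed radius $R$ above that, and then choose $z_0$ negative depending on $R$.

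The genuine gap is in uniqueness. Your contraction lives in the weighted space, so uniqueness of its fixed point only covers competitors already known to satisfy $|\theta|+|\theta'|=O(e^{2z})$. Your plan for proving this does not work as stated. Absorbing $cM(z)$ in a Gronwall argument for $M(z)=\sup_{\zeta\le z}e^{-\beta\zeta}(|\theta|+|\theta'|)$ presupposes $M(z)<\infty$, which is exactly the decay in question. Plain iteration of the integral bound yields no rate either, since $\mathcal N(\theta)$ only gains a factor $\sup|\theta|$, not a power of $e^{z}$.

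The fix is to absorb with a quantity that is finite a priori. Set $\eta=|\theta|+|\theta'|$ and $m(z)=\sup_{\zeta\le z}\eta(\zeta)$; this is finite and tends to $0$ by Lemma~\ref{lem10} and its proof. For $\zeta\le z$ you have $|F(\zeta)|\le Ce^{2\zeta}+C\kappa(z)\eta(\zeta)$ with $\kappa(z)=e^{2z}+m(z)\to0$. Hence for every $z'\le z$,
\[
\eta(z')\le \frac{Ce^{2z}}{2+\delta}+\frac{C\kappa(z)}{\delta}\,m(z),
\]
so $m(z)\le Ce^{2z}$ as soon as $C\kappa(z)/\delta\le\frac12$. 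This takes one step, because $\int_{-\infty}^z e^{-\delta(z-\zeta)}e^{2\zeta}\,d\zeta=e^{2z}/(2+\delta)$ for every $\delta>0$. So complex roots or $\delta<2$ pose no obstruction, and no second pass is needed. Enlarging $R$ (and lowering $z_0$) if necessary then places the competitor in your ball.

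The paper sidesteps the issue by running the contraction in the unweighted sup-norm ball $B_\varepsilon$. There, any solution with $\theta,\theta'\to0$ qualifies on a far enough half-line, so uniqueness is immediate. It then obtains the $O(e^{2z})$ rate of the fixed point by exactly this running-sup absorption.
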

\begin{proof}
	First, we prove the existence of local positive singular solutions of \eqref{LE}. By Lemma~\ref{lem10}, we seek a local positive singular solution of \eqref{LE} in the form 
\begin{align*}
    W(z)=L+x(z):= L + L \theta(r).
\end{align*}
Substituting $W(z)$ into \eqref{LE}, we have
\begin{align*}
   x''+f(z)x'-m(N-2-m)(L+x)+\frac{(\lambda-m)e^{2z}}{1+e^{2z}}(L+x)+\frac{1}{1+e^{2z}}(L+x)^p=0.
\end{align*}
We construct a solution satisfying $x(z)\to0$ as $z\to-\infty$. Set $\zeta(z)=(x(z),y(z))^{T}$, where $y(z)=x'(z)$. Then $\zeta$ satisfies
\begin{align*}
    \frac{d}{dz}\zeta=A\zeta+J(\zeta,z),
\end{align*}
where
\begin{align*}
	A=\begin{pmatrix}
        0 & 1\\ -2(N-2-m)& -(N-2-2m)  
    \end{pmatrix},\quad J(\zeta,z)=\begin{pmatrix}
    0 \\ g(x,y,z)
    \end{pmatrix},\end{align*}
with 
\begin{align*}
g(x,y,z)=-\frac{e^{2z}}{1+e^{2z}}y+\frac{e^{2z}}{1+e^{2z}}\bigl[L^p-(\lambda-m)L+(pL^{p-1}-\lambda+m)x\bigr]-\frac{R(x)}{1+e^{2z}},
\end{align*}
and $R(x)=(L+x)^p-L^p-pL^{p-1}x$.  
    
Let $0<\varepsilon_0<\min\bigl\{\frac{L}{2},1\bigr\}$. Define $X=C((-\infty,Z_0],\RR^2)$ equipped with the norm 
\begin{align*}
	\|\zeta\|_X=
\|x\|_{L^\infty(-\infty,Z_0]}
+
\|y\|_{L^\infty(-\infty,Z_0]},
\end{align*}
and $B_\varepsilon=
\left\{
\zeta\in X:
\|\zeta\|_X\le\varepsilon
\right\}$, where \(0<\varepsilon\le\varepsilon_0\) and \(Z_0<0\) are to be determined. For any
$\zeta_i=(x_i,y_i)^{T}\in B_\varepsilon$, $i=1,2$, and
\(z\le Z_0\), we have
\begin{align*}
	|g(x_1,y_1,z)-g(x_2,y_2,z)|\le e^{2z}|y_1-y_2|+|pL^{p-1}-\lambda+m|e^{2z}|x_1-x_2|+C\varepsilon|x_1-x_2|,
\end{align*}
which implies
\begin{equation}\label{9}
\begin{aligned}
	|J(\zeta_1(z),z)-J(\zeta_2(z),z)|
	&\le
	C\left(e^{2z}+\varepsilon\right)
	|\zeta_1(z)-\zeta_2(z)|\\
	&\le
	C\left(e^{2Z_0}+\varepsilon\right)
	|\zeta_1(z)-\zeta_2(z)|.
\end{aligned}
\end{equation}
Define the operator $\mathcal{F}$ by
\begin{align*}
\mathcal F(\zeta)(z)
:=
\int_{-\infty}^{z}
e^{(z-\rho)A}J(\zeta(\rho),\rho)\,d\rho,
\qquad z\le Z_0.
\end{align*}
It suffices to prove the existence of a fixed point of $\mathcal F$ in $B_\varepsilon$. Since $p>\frac{N+2}{N-2}$, we have $N-2-2m>0$, which implies that all eigenvalues of $A$ have strictly negative real parts. Thus there exist constants $C>0$ and $\bar{\gamma}<0$ such that $\|e^{tA}\|_{\RR^2}\le C e^{\bar{\gamma}t}$. Together with \eqref{9}, for any $z\le Z_0$, we have
\begin{align*}
	\bigl\|\mathcal{F}(\zeta_1)(z)
	-\mathcal{F}(\zeta_2)(z)\bigr\|_{\RR^2}&\le
	\int_{-\infty}^{z}
	\left\|
	e^{(z-\rho)A}
	\bigl(
	J(\zeta_1(\rho),\rho)
	-
	J(\zeta_2(\rho),\rho)
	\bigr)
	\right\|_{\RR^2}\,d\rho
	\\
	&\quad\le
	C\bigl(e^{2Z_0}+\varepsilon\bigr)
	\int_{-\infty}^{z}
	e^{\bar{\gamma}(z-\rho)}
	\|\zeta_1(\rho)-\zeta_2(\rho)\|_{\RR^2}\,d\rho
	\\
	&\quad\le
	\frac{C\bigl(e^{2Z_0}+\varepsilon\bigr)}
	{|\bar{\gamma}|}
	\|\zeta_1-\zeta_2\|_X.
\end{align*}
This yields 
\begin{align*}
	\|\mathcal{F}(\zeta_1)-\mathcal{F}(\zeta_2)\|_{X} \le \frac{C\bigl(e^{2Z_0}+\varepsilon\bigr)}{|\bar{\gamma}|}\|\zeta_{1}-\zeta_{2}\|_{X}.
\end{align*}
We now choose $\varepsilon\in(0,\varepsilon_0]$ sufficiently
small so that $\frac{C\varepsilon}{|\bar{\gamma}|}\le\frac14$. Since 
\begin{align*}
	|g(0,0,z)|\le\frac{e^{2z}}{1+e^{2z}}(L^p+|\lambda-m|L),
	\end{align*}
which implies
\begin{align*}
\|\mathcal F(0)\|_X\le
C\sup_{z\le Z_0}
\int_{-\infty}^{z}
e^{\bar\gamma(z-\rho)}e^{2\rho}\,d\rho\le
Ce^{2Z_0}.
\end{align*}
Hence, for the fixed $\varepsilon$, we can choose $Z_0<0$ sufficiently
negative such that $\frac{Ce^{2Z_0}}{|\bar{\gamma}|}<\frac{1}{4}$ and $\|\mathcal F(0)\|_X\le\frac{\varepsilon}{2}$. Then
\begin{align*}
\|\mathcal F(\zeta_1)-\mathcal F(\zeta_2)\|_X
\le\frac12\|\zeta_1-\zeta_2\|_X.
\end{align*}
Moreover, for every \(\zeta\in B_\varepsilon\),
\begin{align*}
	\|\mathcal F(\zeta)\|_X\le\|\mathcal F(\zeta)-\mathcal F(0)\|_X
	+\|\mathcal F(0)\|_X\le
	\frac12\varepsilon+\frac12\varepsilon
	=\varepsilon.
\end{align*}
By the contraction mapping theorem, $\mathcal{F}$ admits a unique fixed point $\zeta^*=(x^*, y^*)^T$ in $B_{\varepsilon}$. Since $|x^*(z)|\le\varepsilon\le \frac L2$, $W^*(z)=L+x^*(z)>0$ for $z\le Z_0$.  

Second, we prove the uniqueness of a local positive singular solution of \eqref{eq:radial}. Let $\widetilde u_s$ be any positive singular solution of \eqref{eq:radial} and set $\widetilde W(z)=(\sinh r)^m\widetilde u_s(r)$. By Lemma~\ref{lem10} and its proof, any positive singular solution
of \eqref{eq:radial} satisfies
\begin{align*}
\widetilde W(z)-L\to0,\quad \widetilde W'(z)\to0
\qquad\text{as }z\to-\infty.
\end{align*}
Consequently, there exists $Z_1 \le Z_0$ sufficiently negative such that $\widetilde\zeta=\bigl(\widetilde W-L,\widetilde W'\bigr)^T\in B_\varepsilon$ for $z\le Z_1$. Since $\operatorname{Re}\gamma_{\pm}<0$ and $\widetilde\zeta(z)\to0$ as $z\to-\infty$, the variation of constants formula yields that
\begin{align*}
\widetilde\zeta(z)
=\int_{-\infty}^{z}
e^{(z-\rho)A}J(\widetilde\zeta(\rho),\rho)\,d\rho,
\qquad z\le Z_1.
\end{align*}
This means that $\widetilde\zeta$ is a fixed point of the operator $\mathcal{F}$ restricted to $(-\infty,Z_1]$.
Moreover, the restriction of $\zeta^*$ to
$(-\infty,Z_1]$ is a fixed point of the operator $\mathcal{F}$. By the uniqueness of the fixed point of $\mathcal{F}$ in $B_{\varepsilon}$, $\widetilde W\equiv W^*$ on $(-\infty,Z_1]$, which proves the local uniqueness.

Finally, we derive the precise asymptotic behavior of $\theta(r)$ near the origin. For any $Z\le Z_0$, define $M(Z):=\sup_{z\le Z}|\zeta^{*}(z)|$. For any $z\le Z$, we have
\begin{align*}
	\begin{aligned}
	|\zeta^{*}(z)|&=\left|\int_{-\infty}^ze^{(z-\rho)A}J(\zeta^{*}(\rho),\rho)\,d\rho\right|\\
	&\le C\int_{-\infty}^ze^{\bar{\gamma}(z-\rho)}\left(e^{2\rho}+\varepsilon|\zeta^{*}(\rho)|\right)d\rho\\
	&\le\frac{Ce^{2Z}}{2-\bar{\gamma}}+\frac{C\varepsilon}{|\bar{\gamma}|}M(Z).
	\end{aligned}
\end{align*}
Since $\frac{C\varepsilon}{|\bar{\gamma}|}\le\frac{1}{4}$, the inequality guarantees that $\zeta^{*}(z)=O\bigl(e^{2z}\bigr)$ as $z\to-\infty$. Observing that 
\begin{align*}
	\frac{e^{2z}}{1+e^{2z}}=e^{2z}+O\bigl(e^{4z}\bigr)\qquad\text{as } z\to-\infty,
	\end{align*}
combining with $\zeta^{*}(z)=O\bigl(e^{2z}\bigr)$ as $z\to-\infty$, we have
\begin{align*}
g(x^{*},y^{*},z)=\left(L^p-(\lambda-m)L\right)e^{2z}+O\bigl(e^{4z}\bigr)\qquad\text{as }z\to-\infty,
\end{align*}
and 
\begin{align*}
J(\zeta^{*}(z),z)=\left(L^p-(\lambda-m)L\right)e^{2z}e_2+O\bigl(e^{4z}\bigr)\qquad\text{as }z\to-\infty,
\end{align*}
where $e_2=(0,1)^T$. Since $\operatorname{Re}\gamma_{\pm}<0$, we get
\begin{align*}
	\begin{aligned}
		\zeta^{*}(z)&=\int_{-\infty}^ze^{(z-\rho)A}J(\zeta^{*}(\rho),\rho)\,d\rho\\
		&=\left(L^p-(\lambda-m)L\right)\int_{-\infty}^{z}
		e^{(z-\rho)A}e_2e^{2\rho}\,d\rho+O\bigl(e^{4z}\bigr)\\
		&=\left(L^p-(\lambda-m)L\right)e^{2z}\int_{0}^{+\infty}e^{(A-2I_2)\rho}e_2\,d\rho+O\bigl(e^{4z}\bigr)\\
		&=\left(L^p-(\lambda-m)L\right)(2I_2-A)^{-1}e_2\,e^{2z}+O\bigl(e^{4z}\bigr)\qquad\text{as }z\to-\infty,
	\end{aligned}
	\end{align*}
where $I_2$ is the $2\times2$ identity matrix. A direct computation shows 
\begin{align*}
(2I_2-A)^{-1}e_2
=(4N-4-6m)^{-1}(1,2)^{T}.
\end{align*}
Thus, we obtain
\begin{align*}
	\begin{aligned}
x^{*}(z)&=\frac{L^p-(\lambda-m)L}{4N-4-6m}e^{2z}+O\bigl(e^{4z}\bigr)\qquad\text{as }z\to-\infty,\\
y^{*}(z)&=\frac{L^p-(\lambda-m)L}{2N-2-3m}e^{2z}+O\bigl(e^{4z}\bigr)\qquad\text{as }z\to-\infty,
	\end{aligned}
\end{align*}
which yields \eqref{58} and \eqref{59}.
\end{proof}

The following lemma rules out the possibility that the local positive singular solution obtained in Lemma~\ref{lem2} vanishes at a finite point.
\begin{lem}\label{lem3}
    Assume that $N\ge3$, $p>\frac{N+2}{N-2}$, and $\lambda\le\frac{N(N-2)}{4}$. Let $u_s$ denote the maximal continuation of the local positive singular solution of \eqref{eq:radial} obtained in Lemma~\ref{lem2}. Then there is no $R\in(0,\infty)$ such that $u_{s}(r)>0$ for $r\in(0,R)$ and $u_{s}(R)=0$.
\end{lem}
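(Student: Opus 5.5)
The plan is a Pohozaev-type identity on the geodesic ball $B_R$ around $Q$, written for the radial ODE \eqref{eq:radial}, with the singular endpoint $r=0$ controlled by Lemma~\ref{lem2}. Assume by contradiction that $u=u_s>0$ on $(0,R)$ and $u(R)=0$.

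\emph{Sign at $R$.} By ODE uniqueness $u'(R)\neq0$, since otherwise $u\equiv0$. Hence $u'(R)<0$.

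\emph{The identity.} I multiply \eqref{eq:radial}, written as $\bigl((\sinh r)^{N-1}u'\bigr)'+(\sinh r)^{N-1}(\lambda u+u^p)=0$, by two multipliers:
\begin{align*}
(\sinh r)\,u'(r) \qquad\text{and}\qquad c\cosh r\,u(r),
\end{align*}
with $c=\frac{N-2}{2}$. These are the radial conformal vector field $\sinh r\,\partial_r$ and its divergence-type companion. I integrate over $[\delta,R]$ and collect exact derivatives. Writing $F(u)=\frac{\lambda}{2}u^2+\frac{1}{p+1}u^{p+1}$, the boundary quantity is
\begin{align*}
P(r)=(\sinh r)^{N}\Bigl(\tfrac12u'^2+F(u)\Bigr)+c(\sinh r)^{N-1}\cosh r\,uu'-\tfrac{c}{2}\bigl((\sinh r)^{N-1}\cosh r\bigr)'u^2 .
\end{align*}
Expanding $\frac{d}{dr}P$ should leave an interior integrand of the form
\begin{align*}
(\sinh r)^{N-1}\cosh r\Bigl[\Bigl(\tfrac{N-2}{2}-\tfrac{N}{p+1}\Bigr)u^{p+1}+\Bigl(\lambda-\tfrac{N(N-2)}{4}\Bigr)u^2\Bigr]
\end{align*}
plus lower-order $u^2$ terms with $\sinh$-weights. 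I expect those lower-order terms either to cancel or to combine into a nonnegative multiple of $(\sinh r)^{N-1}\cosh r\,u^2$. Checking this bookkeeping (or adjusting the second multiplier to $c\cosh r\,u+\kappa\,u/\sinh r$ if needed) is the step where the choice of weights matters.

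\emph{Signs of the interior terms.} Since $p>\frac{N+2}{N-2}$, we have $\frac{N-2}{2}-\frac{N}{p+1}>0$. Since $\lambda\le\frac{N(N-2)}{4}$, the quadratic coefficient is $\le0$. This is awkward, because the two terms have opposite signs. My first attempt is therefore to flip the roles: choose the multiplier constant so that the $u^2$ term appears as $\bigl(\frac{N(N-2)}{4}-\lambda\bigr)u^2\ge0$ on the same side as the positive $u^{p+1}$ term. Then the whole interior integral is strictly positive, and it must equal the boundary term at $R$.

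\emph{Endpoint $\delta\to0^+$.} By Lemma~\ref{lem2}, $u\sim Lr^{-m}$ and $u'\sim -mLr^{-m-1}$. Each boundary term at $\delta$ is then $O(\delta^{N-2-2m})$; for instance $(\sinh\delta)^Nu^{p+1}\sim\delta^{N-m(p+1)}=\delta^{N-2-2m}$. These tend to $0$ because $N-2-2m>0$ exactly in the supercritical range. The interior integrands are $O(r^{N-3-2m})$, so they are integrable near $0$.

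\emph{Contradiction.} At $R$ we have $u(R)=0$, so only $\frac12(\sinh R)^Nu'(R)^2>0$ survives. The identity then reads
\begin{align*}
-\tfrac12(\sinh R)^Nu'(R)^2=\int_0^R(\text{nonnegative, not identically zero}),
\end{align*}
or the same with the signs arranged oppositely. Either way it is impossible.

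The main obstacle is the bookkeeping of the $u^2$ terms. The hyperbolic weights $\cosh r$ and $\sinh r$ generate extra terms, and these have to be absorbed so that the quadratic contribution carries the single sign $\frac{N(N-2)}{4}-\lambda\ge0$. If a residual term of the wrong sign remains, I would absorb it with a Hardy-type inequality on $B_R$, which is legitimate because $u(R)=0$, or else replace the multiplier $\sinh r$ by $\int_0^r(\sinh t)^{N-1}dt/(\sinh r)^{N-1}$ in the style of Mancini–Sandeep.
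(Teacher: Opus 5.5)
Your strategy is the paper's. The multipliers $(\sinh r)^Nu'$ and $\frac{N-2}{2}(\sinh r)^{N-1}\cosh r\,u$ are exactly those of Lemma~\ref{Pohozaev1}. The paper's proof (via $h=(\sinh r)^{\lzero}u$, $g=\sinh r$) is the same identity in other variables, since $h^2=(\sinh r)^{N-1}u^2$ and $\frac12\sinh R\,h'(R)^2=\frac12(\sinh R)^Nu'(R)^2$. Your handling of $u'(R)<0$ and of $\delta\to0^+$ (boundary terms $O(\delta^{N-2-2m})$, integrands $O(r^{N-3-2m})$) is also correct.

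However, the decisive step, the sign structure of the identity, is miscomputed and then left unresolved, so as written no contradiction is actually derived. There are two concrete errors.

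First, the $u^2$ term of your boundary functional should be $-\frac{N-2}{4}(\sinh r)^Nu^2$, not $-\frac c2\bigl((\sinh r)^{N-1}\cosh r\bigr)'u^2$. It comes from $(\cosh r)'=\sinh r$, i.e.\ from $-\frac{N-2}{2}(\sinh r)^Nuu'$. With your version, $P'$ keeps an uncancelled, non-sign-definite term $-c(N-1)(\sinh r)^{N-2}\cosh^2r\,uu'$.

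Second, the $u^{p+1}$ coefficient you predict has the wrong sign. Differentiating directly and using \eqref{eq:radial} gives the identity below; the $u'^2$ terms cancel exactly because $c=\frac{N-2}{2}$. This is \eqref{pohozaev-identity}:
\begin{align*}
&\frac{d}{dr}\Bigl[(\sinh r)^N\Bigl(\tfrac12u'^2+F(u)\Bigr)+\tfrac{N-2}{2}(\sinh r)^{N-1}\cosh r\,uu'-\tfrac{N-2}{4}(\sinh r)^Nu^2\Bigr]\\
&\qquad=(\sinh r)^{N-1}\cosh r\Bigl[\Bigl(\lambda-\tfrac{N(N-2)}{4}\Bigr)u^2+\Bigl(\tfrac{N}{p+1}-\tfrac{N-2}{2}\Bigr)u^{p+1}\Bigr].
\end{align*}

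Under the hypotheses $\lambda\le\frac{N(N-2)}{4}$ and $p>\frac{N+2}{N-2}$, both coefficients on the right are nonpositive and the second is strictly negative. So the two terms do not have opposite signs, and the obstacle occupying the second half of your proposal does not exist. Integrating over $(0,R)$, the left side equals $\frac12(\sinh R)^Nu'(R)^2>0$ while the right side is strictly negative, which is the contradiction.

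The remedies you list (a Hardy inequality, a Mancini--Sandeep multiplier) are unnecessary. Re-choosing the multiplier constant also cannot flip the quadratic term in isolation. Any $c\neq\frac{N-2}{2}$ adds $\bigl(c-\frac{N-2}{2}\bigr)(\sinh r)^{N-1}\cosh r\,u'^2$ to the right side and changes the $u^{p+1}$ coefficient to $\frac{N}{p+1}-c$. Once the computation is done correctly, your argument is the paper's proof.
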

\begin{proof}
	Arguing by contradiction, suppose that there exists $R\in(0,\infty)$ such that $u_{s}(r)>0$ in $(0,R)$ and $u_{s}(R)=0$. Set $h(r)=(\sinh r)^{\lzero}u_s(r)$. Then $h$ satisfies 
\begin{equation}\label{v}
    h''-a(r)h+b(r)h^p = 0, 
\end{equation}
where $a(r) = \lzero^2 - \lambda +\frac{(N-3)\lzero}{2}(\operatorname{csch}r)^2$ and $b(r)=(\sinh r)^{-\lzero(p-1)}$.

Let $g\in C^{\infty}([0,+\infty))$. Multiplying \eqref{v} by $h'g$ and integrating over $[\varepsilon,R]$, we obtain  
\begin{equation}\label{51}
\frac{1}{2} \int_\varepsilon^R g'h'^2\,dr = \biggl(\frac{h'^2g}{2} - \frac{agh^2}{2}+\frac{bgh^{p+1}}{p+1}\biggr)\biggr|_\varepsilon^R + \frac{1}{2}\int_\varepsilon^R (ag)'h^2\,dr- \frac{1}{p+1} \int_\varepsilon^R (bg)'h^{p+1}\,dr.
\end{equation}
Multiplying \eqref{v} by $g'h$ and integrating over $[\varepsilon,R]$, we have   
\begin{equation}\label{52}
\frac{1}{2} \int_\varepsilon^R g'h'^2\,dr= \biggl(\frac{g'hh'}{2}- \frac{h^2 g''}{4}\biggr)\biggr|_\varepsilon^R + \int_\varepsilon^R \biggl( \frac{g'''}{4} - \frac{ag'}{2} \biggr) h^2\,dr + \int_\varepsilon^R \frac{g'b}{2} h^{p+1}\,dr.
\end{equation}
Combining \eqref{51} and \eqref{52}, we get
	\begin{equation}\label{pohozaev}
	\begin{aligned}
		&\biggl(\frac{h'^2g}{2}-\frac{agh^2}{2}+\frac{bgh^{p+1}}{p+1}-\frac{ g'hh'}{2}+\frac{h^2 g''}{4}\biggr)\biggr|_\varepsilon^R \\
		&\quad = \int_\varepsilon^R \biggl[ \biggl( \frac{g'''}{4} - \frac{a'g}{2}-ag'\biggr)h^2 + \biggl(\frac{g'b}{2}+\frac{(bg)'}{p+1}\biggr) h^{p+1} \biggr]\, dr.
	\end{aligned}
\end{equation}
Now take $g(r)=\sinh r$. From Lemma~\ref{lem2}, we see  $u_s(r) \sim L r^{-m}$ and $u_s'(r) \sim -mL r^{-m-1}$ as $r\to0^+$. It follows that near the origin,
\begin{align*}
		h(r)\sim Lr^{\frac{N-1}{2}-\frac{2}{p-1}},\qquad h'(r)\sim(\lzero-m)Lr^{\frac{N-1}{2}-1-\frac{2}{p-1}}.
\end{align*}
Since $p>\frac{N+2}{N-2}$, it is easy to check that, as $r\to0^+$
	\begin{equation}\label{48}
	b(r)g(r)h^{p+1}(r) \to 0, \quad a(r)h^2(r)g(r) \to 0, \quad h^2(r)g''(r) \to 0,
\end{equation}
and similarly for the derivative cross-terms,
\begin{equation}\label{49}
	h'(r)h(r) \to 0, \quad h'^2(r)g(r) \to 0, \quad g'(r)h(r)h'(r) \to 0.
\end{equation}
Moreover, $u_{s}(R)=0$ and $u_{s}(r)>0$ on $(0,R)$ yield $u_{s}'(R)<0$. By the definition of $h$, we see
\begin{equation}\label{50}
	h(R)=0 \quad \text{and} \quad 0<|h'(R)|<\infty.
\end{equation}
Letting $\varepsilon\to0^+$ in \eqref{pohozaev} and invoking \eqref{48}--\eqref{50}, the identity reduces to 
	\begin{align*}
		\begin{aligned}
			& \frac{1}{2}h'^2(R)\sinh R + \int_{0}^{R} \left(\frac{N(N-2)}{4}-\lambda\right)\cosh r \, h^2 \, dr \\
			&\quad = \frac{(2-N)p+(N+2)}{2(p+1)} \int_{0}^{R} \cosh r (\sinh r)^{-\lzero(p-1)} h^{p+1} \, dr.
		\end{aligned}
	\end{align*}
Since $\lambda\le\frac{N(N-2)}{4}$ and $p>\frac{N+2}{N-2}$, the left-hand side is strictly positive, whereas the right-hand side is strictly negative. This is a contradiction, which completes the proof.
\end{proof}

\begin{lem}\label{Pohozaev1}
Assume that $N\ge3$ and $p>1$. Let $u\in C^2([r_1,r_2])$ be a positive solution of \eqref{eq:radial} in $(r_1,r_2)$, where $0<r_1<r_2$. Then the following Pohozaev-type identity holds
	\begin{equation}\label{pohozaev-identity}
	\begin{aligned}
		&\biggl[ (\sinh r)^N \biggl( \frac{u'^2}{2}+\frac{N-2}{2}\coth r\,uu' +\left(\frac{\lambda}{2}-\frac{N-2}{4}\right)u^2 + \frac{u^{p+1}}{p+1} \biggr) \biggr]\biggr|_{r_1}^{r_2} \\
		&\quad = \int_{r_1}^{r_2}(\sinh r)^{N-1}\cosh r \biggl[ \left(\lambda-\frac{N(N-2)}{4}\right)u^2 + \left(\frac{N}{p+1}-\frac{N-2}{2}\right)u^{p+1} \biggr] \, dr.
	\end{aligned}
\end{equation}
\end{lem}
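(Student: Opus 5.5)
This is a classical Pohozaev computation: multiply \eqref{eq:radial} by a radial multiplier and integrate over $[r_1,r_2]$. Since $u\in C^2([r_1,r_2])$ is smooth away from the pole, all boundary terms are finite and no limiting argument is needed.

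\textbf{Divergence form.} First I rewrite \eqref{eq:radial} as
\begin{align*}
\bigl((\sinh r)^{N-1}u'\bigr)'=-(\sinh r)^{N-1}(\lambda u+u^p).
\end{align*}

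\textbf{The multiplier.} I use $X(u)=(\sinh r)\,u'+\frac{N-2}{2}\cosh r\,u$. This is the hyperbolic analogue of the Euclidean multiplier $ru'+\frac{N-2}{2}u$, with the radial conformal field $\sinh r\,\partial_r$ in place of $r\,\partial_r$. Multiplying the equation by $(\sinh r)^{N-1}X(u)$ yields four groups of terms.

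\begin{enumerate}
\item[(a)] The term $(\sinh r)^N u'u''$ equals $\bigl((\sinh r)^N\frac{u'^2}{2}\bigr)'-\frac N2(\sinh r)^{N-1}\cosh r\,u'^2$.
\item[(b)] The term $(N-1)(\sinh r)^{N-1}\cosh r\,u'^2$ comes from the drift.
\item[(c)] The cross terms are $\frac{N-2}{2}\cosh r\,u\bigl((\sinh r)^{N-1}u'\bigr)'$. Integrating by parts gives $\bigl(\frac{N-2}{2}(\sinh r)^{N-1}\cosh r\,uu'\bigr)'$, minus $\frac{N-2}{2}(\sinh r)^{N-1}\cosh r\,u'^2$, minus $\frac{N-2}{2}(\sinh r)^N uu'$.
\item[(d)] The zeroth-order terms $(\lambda u+u^p)X(u)$ become $\bigl((\sinh r)^N(\frac\lambda2u^2+\frac{u^{p+1}}{p+1})\bigr)'$ plus bulk terms with coefficient $(\sinh r)^{N-1}\cosh r$. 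These are $\lambda\bigl(\frac{N-2}{2}-\frac N2\bigr)u^2$ and $\bigl(\frac{N-2}{2}-\frac{N}{p+1}\bigr)u^{p+1}$.
\end{enumerate}

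\textbf{Cancellations.} The $u'^2$ bulk terms cancel exactly, since $-\frac N2+(N-1)-\frac{N-2}{2}=0$. The leftover $-\frac{N-2}{2}(\sinh r)^N uu'$ equals $-\frac{N-2}{4}\bigl((\sinh r)^N u^2\bigr)'+\frac{N(N-2)}{4}(\sinh r)^{N-1}\cosh r\,u^2$. This produces the boundary term $-\frac{N-2}{4}(\sinh r)^Nu^2$ and the bulk $\frac{N(N-2)}{4}$ coefficient in \eqref{pohozaev-identity}. The remaining bulk $u^2$ term from (d) is $-\lambda(\sinh r)^{N-1}\cosh r\,u^2$, which combines with the previous one into $\bigl(\lambda-\frac{N(N-2)}{4}\bigr)$ after moving everything to one side.

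\textbf{Assembling.} Collecting all exact derivatives on the left and the bulk terms on the right, and integrating over $[r_1,r_2]$, gives \eqref{pohozaev-identity}. The cross boundary term appears as $\frac{N-2}{2}(\sinh r)^N\coth r\,uu'$, since $(\sinh r)^{N-1}\cosh r=(\sinh r)^N\coth r$.

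\textbf{Main obstacle.} The only real difficulty is bookkeeping of signs and the constant $\frac{N-2}{4}$. I would verify the final identity by differentiating the bracket on the left directly, using \eqref{eq:radial} to eliminate $u''$, and checking that it matches the integrand on the right. That direct check can also serve as the written proof.
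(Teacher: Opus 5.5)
Your proof is correct and is essentially the paper's argument. Your multiplier $(\sinh r)^{N-1}X(u)=(\sinh r)^Nu'+\frac{N-2}{2}(\sinh r)^{N-1}\cosh r\,u$ is exactly the paper's first multiplier $(\sinh r)^Nu'$ plus $\frac{N-2}{2}$ times its second multiplier $\cosh r\,(\sinh r)^{N-1}u$, so you simply perform in one pass the cancellation of the $u'^2$ bulk terms that the paper obtains by substituting its second integrated identity into the first. Your coefficients all check out: the vanishing $u'^2$ coefficient, the $-\frac{N-2}{4}$ boundary term, and the $\lambda-\frac{N(N-2)}{4}$ bulk term.
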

\begin{proof}
Multiplying \eqref{eq:radial} by $(\sinh r)^N u'$ and integrating over $[r_1,r_2]$, we obtain
\begin{align*}
\int_{r_1}^{r_2}\bigl[(\sinh r)^N u''u' + (N-1)(\sinh r)^N\coth r \, u'^2 + (\sinh r)^N(\lambda u+u^p)u'\bigr]\,dr
=0.
\end{align*}
Using
\begin{align*}
\left(\frac{(\sinh r)^Nu'^2}{2}
\right)'=(\sinh r)^Nu''u'+\frac{N(\sinh r)^{N-1}\cosh r\,u'^2}{2},
\end{align*}
and integrating by parts, we get
\begin{equation}\label{7}
	\begin{aligned}
		&(\sinh r)^N \biggl( \frac{u'^2}{2}+\frac{\lambda}{2}u^2 +\frac{u^{p+1}}{p+1} \biggr)\biggr|_{r_1}^{r_2} \\
		&\quad = \int_{r_1}^{r_2} (\sinh r)^{N-1}\cosh r \biggl[ -\frac{N-2}{2}u'^2+N\biggl( \frac{\lambda}{2}u^2 +\frac{1}{p+1}u^{p+1} \biggr) \biggr] \, dr.
	\end{aligned}
\end{equation}
Multiplying \eqref{eq:radial} by $\cosh r(\sinh r)^{N-1} u$ and integrating over $[r_1,r_2]$ yields
\begin{equation}\label{8}
\begin{aligned}
		\int_{r_1}^{r_2} \cosh r(\sinh r)^{N-1}u'^2 \, dr
   &=\int_{r_1}^{r_2}\cosh r[((\sinh r)^{N-1}uu')'+(\sinh r)^{N-1}(\lambda u^2+u^{p+1})]dr\\
    &=(\sinh r)^{N-1}\biggl( \cosh r\,uu'-\frac{\sinh r}{2}u^2 \biggr)\biggr|_{r_1}^{r_2}\\
    &\quad +\int_{r_1}^{r_2} (\sinh r)^{N-1}\cosh r \biggl( \frac{N+2\lambda}{2}u^2+u^{p+1} \biggr) \, dr.
    \end{aligned}
\end{equation}
Substituting \eqref{8} into \eqref{7}, we obtain \eqref{pohozaev-identity}.
\end{proof}
We now prove Theorem~\ref{thm4}.

\textbf{Proof of Theorem~\ref{thm4}.} The proof is divided into three steps.

\textbf{Step 1.} We prove the existence and uniqueness of a positive singular solution to \eqref{eq:radial}. Lemma~\ref{lem2} shows that if $p>\frac{N+2}{N-2}$ and $\lambda\le\lzero^2$, \eqref{eq:radial} has the unique positive local singular solution $u_s$ satisfying
\begin{align*}
u_{s}(r)=L(\sinh r)^{-m}+\frac{L^p-(\lambda-m)L}{4N-4-6m}(\sinh r)^{2-m}+O\bigl((\sinh r)^{4-m}\bigr)\qquad \text{as }r\to0^+.
\end{align*}
By Lemma~\ref{lem3}, $u_{s}$ is positive whenever it exists for $\lambda\le\frac{N(N-2)}{4}$. Consider the energy functional
    \begin{align*}
        \mathcal{E}_{u}(r)=\frac{u'^2(r)}{2}+\frac{\lambda u^2}{2}+\frac{u^{p+1}}{p+1}.
    \end{align*}
    If $u$ is a positive solution to \eqref{eq:radial}, then $\mathcal{E}_u'(r)=-(N-1)\coth r\,u'^2(r)\le0$. For any $\lambda\le\frac{N(N-2)}{4}$, there exists a constant $C_0$ such that 
    \begin{align*}
    	\frac{\lambda u^2}{2}+\frac{u^{p+1}}{p+1}\ge C_0,\qquad\text{for }u>0,
    	\end{align*}
    and 
    \begin{align*}
    	\frac{\lambda u^2}{2}+\frac{u^{p+1}}{p+1}\to+\infty\qquad\text{as }u\to+\infty.
    	\end{align*} 
    Thus, for any $r_{*}\in(0,r_0)$,
    \begin{align*}
    	C_0\le\mathcal{E}_{u_{s}}(r)\le\mathcal{E}_{u_{s}}(r_*)<\infty,\qquad r\ge r_{*},
    \end{align*}
 which implies that $u_s$ and $u_s'$ are bounded for $r\ge r_*$. By a standard ODE argument, \eqref{eq:radial} admits the unique positive singular solution $u_{s}$ if $\lambda\le\frac{N(N-2)}{4}$ and $p>\frac{N+2}{N-2}$.

 \textbf{Step 2.} We study the limiting behavior of $u_s$ as $r\to+\infty$.
 By \eqref{eq:radial}, $u_s''$ is bounded for $r\ge r_*$, and so $u_s'$ is uniformly continuous for $r\ge r_*$. Since $\mathcal{E}_{u_s}$ is non-increasing and bounded below in $[r_*,+\infty)$, there exists $\ell_3\in\RR$ such that $\mathcal{E}_{u_s}(r)\to\ell_3$ as $r\to+\infty$. This yields $u_{s}'\in L^2(r_*,+\infty)$. Thus, $u_{s}'(r)\to0$ as $r\to+\infty$, and so 
 \begin{align*}
 	\frac{\lambda u_{s}^2(r)}{2}+\frac{u_{s}^{p+1}(r)}{p+1}\to\ell_3\qquad\text{as }r\to+\infty.
 \end{align*}
 
\textit{Case 1.} $\lambda\in\bigl[0,\frac{N(N-2)}{4}\bigr]$. It is easy to see that there exists $\bar{\ell}\ge0$ such that $u_{s}(r)\to\bar{\ell}$ as $r\to+\infty$. By \eqref{eq:radial}, $u_{s}''(r)\to-\lambda\bar{\ell}-\bar{\ell}^{p}$ as $r\to+\infty$. Since $u_{s}'(r)\to0$, we deduce $-\lambda\bar{\ell}-\bar{\ell}^{p}=0$, which yields $\bar{\ell}=0$.
 
\textit{Case 2.} $\lambda<0$. 
Suppose by contradiction that $u_s(r)$ does not converge as $r\to+\infty$. Since $u_{s}$ is continuous and bounded in $(r_{*},+\infty)$, there exist two sequences $r_{n}^{+}\to+\infty$ and $r_{n}^{-}\to+\infty$ such that
\begin{align*}
	\lim_{n\to+\infty}u_s(r^{+}_n)=u_1>u_2=	\lim_{n\to+\infty}u_s(r^{-}_n),
\end{align*}
where $u_{1}=\limsup_{r\to+\infty}u_{s}(r)$ and $u_2=\liminf_{r\to+\infty}u_{s}(r)$ both satisfy $\frac{\lambda u_i^2}{2}+\frac{u_i^{p+1}}{p+1}=\ell_3$. Choose $u_3\in(u_2,u_1)$ such that $\frac{\lambda u_3^2}{2}+\frac{u_3^{p+1}}{p+1}\neq\ell_3$. By the Intermediate Value Theorem, there exists a sequence $r_n\to+\infty$ such that $u_{s}(r_n)=u_3$ for every $n\in\mathbb{N}$, which is a contradiction. Thus, $u_{s}(r)\to\bar{\ell}$ as $r\to+\infty$. Similar to Case 1, we obtain $-\lambda\bar{\ell}-\bar{\ell}^{p}=0$, which implies $\bar{\ell}\in\{0,(-\lambda)^{\frac{1}{p-1}}\}$.

\textbf{Step 3.} We investigate the asymptotic behavior of $u_{s}$ at infinity by distinguishing three cases. 

Before proceeding, we observe from Lemma~\ref{lem2} that $u_{s}(r)\sim Lr^{-m}$ and $u_{s}'(r)\sim-mLr^{-m-1}$ as $r\to0^+$. Since $p>\frac{N+2}{N-2}$, as $r\to0^+$,
\begin{align*}
	(\sinh r)^N u_s^{p+1}(r) \to 0, \quad (\sinh r)^N u_s^2(r) \to 0, \quad (\sinh r)^N u_s'^2(r) \to 0,
\end{align*}
and $(\sinh r)^{N}\coth r\,u_s(r)u_s'(r) \to 0$. Consequently, all the boundary terms at $r_1\to0^+$ in the Pohozaev identity \eqref{pohozaev-identity} vanish in the three cases below.

\textit{Case 1.} $\lambda<0$. We prove $u_s(r)\to(-\lambda)^{\frac{1}{p-1}}$ as $r\to+\infty$. Suppose by contradiction that $u_s(r)\to0$ as $r\to+\infty$. It follows from \cite[Lemma~2.3]{BK} that $u_{s}(r)\sim u_{\infty}e^{-\nu r}$ as $r\to+\infty$. Noticing that $\nu>N-1$ since $\lambda<0$, we have
\begin{align*}
	|((\sinh r)^{N-1}u_s')'|\le(\sinh r)^{N-1}(|\lambda| u_s+u_s^{p})\in L^{1}(1,+\infty),
\end{align*}
which implies that there exists $C'\le0$ such that $\lim_{r\to+\infty}(\sinh r)^{N-1}u_s'(r)=C'$. If $C'<0$, then $u_s'(r)\sim C_1e^{(1-N)r}$ and $u_{s}(r)\sim C_2e^{(1-N)r}$ as $r\to+\infty$, where $C_1<0$ and $C_2>0$. This yields $u_s(r)e^{\nu r}\to+\infty$ as $r\to+\infty$, which is a contradiction. Thus, $C'=0$. Integrating \eqref{eq:radial} from $r$ to $+\infty$, we have 
\begin{align*}
	(\sinh r)^{N-1}u_s'(r)=\int_{r}^{+\infty}(\sinh\rho)^{N-1}(\lambda u_s+u_s^{p})\,d\rho,
\end{align*}
which yields $u_{s}'(r)\sim -Ce^{-\nu r}$ as $r\to+\infty$.
Thus, 
\begin{align*}
	(\sinh r)^{N}u_s'^2(r)\sim Ce^{(N-2\nu) r}\to0\qquad\text{as }r\to+\infty.
\end{align*}
	Hence, all the boundary terms at $r_2 \to \infty$ in \eqref{pohozaev-identity} vanish. However, since $p>\frac{N+2}{N-2}$ and $\lambda<0$, the right-hand side of \eqref{pohozaev-identity} is strictly negative, whereas the left-hand side tends to $0$. This yields a contradiction.

\textit{Case 2.} $\lambda=0$. Then $u_{s}(r)\to0$ as $r\to+\infty$. From \cite[Lemma~2.5]{BK}, $u_s$ satisfies either 
$u_s(r)\sim u_{\infty}e^{(1-N)r}$ or $u_s(r)\sim \left(\frac{N-1}{p-1}\right)^{1/(p-1)}r^{-1/(p-1)}$ as $r\to+\infty$. If the former holds, it follows from \cite[Lemma~2.5]{BK} that $\frac{u_s'(r)}{u_s(r)}\to-(N-1)$ as $r\to+\infty$. This yields $u_s'(r)\sim (1-N)u_{\infty}e^{(1-N)r}$ as $r\to+\infty$. Consequently, the left-hand side of \eqref{pohozaev-identity} tends to $0$ as $r_1\to0$ and $r_2\to+\infty$, while the right-hand side is strictly negative, which is absurd. Hence, $u_s(r)\sim \left(\frac{N-1}{p-1}\right)^{1/(p-1)}r^{1/(p-1)}$ as $r\to+\infty$.

\textit{Case 3.} $0<\lambda\le\frac{N(N-2)}{4}$. Then $u_{s}(r)\to0$ as $r\to+\infty$. It follows from \cite[Lemma~2.3]{BK} that $u_{s}$ satisfies \eqref{24} or $u_{s}(r)\sim\bar{u}_\infty e^{-\mu r}$ as $r\to+\infty$. If $\lambda\in\left(0,\frac{N(N-2)}{4}\right)$, applying Lemma~\ref{lem4} and an argument similar to that in Case $1$, we obtain $u_{s}(r)\sim \bar{u}_\infty e^{-\mu r}$ as $r\to+\infty$. When $\lambda=\frac{N(N-2)}{4}$, suppose by contradiction that $u_s(r)\sim u_\infty e^{-\frac{N}{2}r}$ as $r\to+\infty$. Applying Lemma~\ref{lem4} again, we obtain 
\begin{align*}
	(\sinh r)^Nu_s'^2(r)\sim\frac{N^2u_{\infty}^2}{2^{N+2}},\quad (\sinh r)^{N}\coth r\,u_{s}(r)u_{s}'(r)\sim-\frac{Nu_{\infty}^2}{2^{N+1}}\qquad\text{as }r\to+\infty.
	\end{align*}
Moreover, we have
\begin{align*}
	(\sinh r)^{N}u_{s}^2(r)\sim\frac{u_{\infty}^2}{2^N},\quad (\sinh r)^{N}u_{s}^{p+1}(r)\to0\qquad\text{as }r\to+\infty.
	\end{align*} 
	Therefore, as $r_{1}\to0^+$ and $r_{2}\to+\infty$, the left-hand side of \eqref{pohozaev-identity} tends to $\frac{u_{\infty}^2}{2^{N+1}}>0$. However, since $p>\frac{N+2}{N-2}$, the right-hand side is strictly negative. This contradiction yields $u_{s}(r)\sim\bar{u}_{\infty}e^{-\frac{N-2}{2}r}$ as $r\to+\infty$. 

The proof of Theorem~\ref{thm4} is completed.\qed

\section{Proof of Theorems \ref{thm2} and \ref{thm3}}\label{S4}
In this section, we investigate the positive radial singular solutions to \eqref{eq1} in the critical case $p=\frac{N+2}{N-2}$ and $\lambda=\frac{N(N-2)}{4}$, namely the problem 
\begin{equation}\label{critical}
\begin{cases}
    -u''-(N-1)\coth r\,u'-\frac{N(N-2)}{4}u=u^{\frac{N+2}{N-2}},& r>0,\\
    u(r)\to+\infty\qquad\text{as }r\to0^+.
    \end{cases}
\end{equation}
  
  We consider the transformation
\begin{align*}
v(t)=(\sinh r)^{\frac{N-2}{2}}u(r),\qquad  t=\ln\tanh\frac{r}{2}.
\end{align*}
Then $v$ satisfies the autonomous equation
\begin{equation}\label{1}
v''-\frac{(N-2)^2}{4}v+v^{\frac{N+2}{N-2}}=0,\qquad t\in(-\infty,0).
\end{equation}
Observe that $ v_0=\left(\frac{N-2}{2}\right)^{(N-2)/2}$ is a constant solution to \eqref{1}. Consequently,
 \begin{align*}
     U_s(r)=\left(\frac{N-2}{2\sinh r}\right)^{\frac{N-2}{2}}
 \end{align*}
is an explicit positive singular solution to \eqref{critical}.

Define the energy functional 
\begin{align*}
    \mathcal{E}_{v}(t)=\frac{1}{2}v'^2(t)+F(v(t)),
\end{align*}
where 
\begin{align*}
    F(v)=\int_{0}^{v}\biggl(-\frac{(N-2)^2}{4}\rho+\rho^{\frac{N+2}{N-2}}\biggr)\,d\rho=-\frac{(N-2)^2}{8}v^2+\frac{N-2}{2N}v^{\frac{2N}{N-2}}.
\end{align*}
For every solution $v$ of \eqref{1}, we have
\begin{equation}\label{45}
    \frac{d}{dt}\mathcal{E}_{v}(t)=v'\left(v''-\frac{(N-2)^{2}}{4}v+v^{\frac{N+2}{N-2}}\right)=0,
\end{equation}
which means that $\mathcal{E}_{v}$ is constant along any solution of \eqref{1}.

We first prove the existence of positive solutions of \eqref{1}.
\begin{lem}\label{lem1}
	Let $N\ge3$, and let $v$ be the maximal solution of \eqref{1}
	satisfying
	\begin{align*}
	v(t_0)=\beta>0,\qquad v'(t_0)=\vartheta
	\end{align*}
	for some $t_0<0$, where $\beta$ and $\vartheta$ satisfy
	\begin{equation}\label{46}
		F(v_0)<E:=\frac{1}{2}\vartheta^2+F(\beta)<0.
	\end{equation}
Then $v$ extends to a positive nonconstant periodic solution on $\RR$.
\end{lem}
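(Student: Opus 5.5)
The plan is a standard phase-plane argument for the conservative equation \eqref{1}, using the conservation law \eqref{45}. Write $q=\frac{N+2}{N-2}$ and $f(v)=F'(v)=-\frac{(N-2)^2}{4}v+v^{q}$.

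\textbf{Shape of $F$.} On $[0,\infty)$ we have $F(0)=0$. The function $F$ is strictly decreasing on $(0,v_0)$ and strictly increasing on $(v_0,\infty)$, with $F(v)\to+\infty$ as $v\to\infty$. Its unique negative minimum is $F(v_0)$, attained at $v_0$, where $f(v_0)=0$ and $f'(v_0)=(q-1)\frac{(N-2)^2}{4}>0$.

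Hence for $E\in(F(v_0),0)$ the sublevel set $\{v>0: F(v)\le E\}$ is a compact interval $[b_-,b_+]$ with $0<b_-<v_0<b_+$. Here $F(b_\pm)=E$, and $f(b_-)<0<f(b_+)$ because $F$ is strictly monotone on each side of $v_0$.

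\textbf{Global existence and positivity.} By \eqref{45}, $\frac12 v'^2+F(v)=E$ along the maximal solution, so $F(v(t))\le E$ for all $t$. By continuity $v$ stays in $[b_-,b_+]$: it starts there, since $F(\beta)\le E<0$ forces $\beta\in[b_-,b_+]$, and it cannot cross $b_\pm$ without leaving the sublevel set. In particular $v\ge b_->0$, and $|v'|\le\sqrt{2(E-F(v_0))}$. Since $v$ and $v'$ stay bounded, the maximal interval is all of $\RR$. The solution is not constant, because the constant solutions are $0$ and $v_0$, with energies $0$ and $F(v_0)$, and both differ from $E$.

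\textbf{Periodicity.} This is the only step needing care. While $v'>0$, we have $v'=\sqrt{2(E-F(v))}$, so the time to travel from $b_-$ to $b_+$ is
\begin{align*}
\tau=\int_{b_-}^{b_+}\frac{dv}{\sqrt{2(E-F(v))}}.
\end{align*}
This integral is finite because $E-F(v)\sim |f(b_\pm)|\,|v-b_\pm|$ near the endpoints, with $f(b_\pm)\neq0$. Also $E-F>0$ in the interior, by the strict monotonicity of $F$ on each side of $v_0$.

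\textbf{Turning points.} The orbit reaches $b_+$ in finite time. At a time $t_1$ where $v=b_+$ we have $v'=0$ and $v''=-f(b_+)<0$, so $v$ turns around. The analogous statement holds at $b_-$, where $v''=-f(b_-)>0$.

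\textbf{Symmetry and conclusion.} By uniqueness for \eqref{1} and time-reversal symmetry, $v(t_1+s)=v(t_1-s)$, and similarly about any time where $v=b_-$. Reflection about two successive turning points, separated by $\tau$, shows that $v$ is $2\tau$-periodic. It is nonconstant, with $\min v=b_->0$ and $\max v=b_+$.
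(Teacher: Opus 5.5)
Your proof is correct, but it produces the turning points by a different mechanism than the paper. The paper never computes a travel time. After the same energy confinement $v_-\le v\le v_+$ and global existence, it argues by contradiction that $v$ cannot be monotone on any half-line $(-\infty,t_0]$. A bounded monotone tail would converge to some $v_\infty$ with $v'\to0$ and $v''\to -F'(v_\infty)$. This forces $F'(v_\infty)=0$, hence $v_\infty=v_0$ and $E=F(v_0)$, contradicting \eqref{46}. This yields two critical points $t_2<t_1$, and reflection about them (by uniqueness, exactly as in your last step) shows that $2(t_1-t_2)$ is a period. You instead construct the turning points through the quadrature $\tau=\int_{b_-}^{b_+}dv/\sqrt{2(E-F(v))}$. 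Its finiteness rests on $f(b_\pm)\neq0$, which is the same nondegeneracy the paper uses: no equilibrium lies on the level set $\{F=E\}$.

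Two small points to tighten. First, strict monotonicity of $F$ on each side of $v_0$ does not by itself give $f(b_\pm)\neq0$. Use instead that $f(v)=v\bigl(v^{4/(N-2)}-\frac{(N-2)^2}{4}\bigr)$ has $v_0$ as its only positive zero. Second, "the orbit reaches $b_+$ in finite time" should be made explicit. Take the maximal interval on which $v'$ keeps its sign, bound its length by $\tau$, and note that $v'$ can only vanish where $F(v)=E$. If $\vartheta=0$, use $v''(t_0)=-f(\beta)\neq0$ to get the motion started.

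As for what each route buys: the paper's is shorter and sidesteps both the integral estimate and the sign-of-$\vartheta$ case analysis. Yours yields more information. It identifies $\min v=b_-<v_0<b_+=\max v$ and gives the least period $T_b=2\tau$ explicitly. The paper only obtains some period $2(t_1-t_2)$. It later uses, without spelling it out, that the extrema of $v$ are exactly $v_\mp$: for the strict inequalities in \eqref{47}, and for $b=\inf v\in(0,v_0)$ in Theorem~\ref{thm3}.
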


\begin{proof}
A simple calculation shows that $F$ is strictly decreasing on $(0,v_0)$, strictly increasing
on $(v_0,\infty)$, and $F(0)=F(v^{*})=0$, where $v^{*}=(N(N-2)/4)^{(N-2)/4}$. Moreover, $F(v_0)<E<0$. Then the equation $F(\xi)=E$ has exactly two positive roots
$v_-$ and $v_+$ with $0<v_-<v_0<v_+<v^{*}$. Since $\mathcal{E}_{v}(t)\equiv E$, we have $v_-\le v(t)\le v_+$ on the maximal interval of existence. Moreover,
\begin{align*}
v'^2(t)=2\bigl(E-F(v(t))\bigr)\le2\bigl(E-F(v_0)\bigr).
\end{align*}
Thus both $v$ and $v'$ are bounded, so the solution $v$ can be extended to all of $\RR$.

We next show that $v$ cannot be strictly monotone on $(-\infty,t_0]$. Suppose by contradiction that $v$ is strictly monotone on $(-\infty,t_0]$. Since $v$ is bounded, there exists $v_{\infty}\in[v_-,v_+]$ such that $\lim_{t\to-\infty}v(t)=v_{\infty}$. This yields $\lim_{t\to-\infty}v'^2(t)=2\bigl(E-F(v_{\infty})\bigr)\ge0$. If $E-F(v_{\infty})>0$, together with the fact that $v$ is strictly monotone on $(-\infty,t_0]$, we obtain $|v'(t)|\ge C>0$ for $t$ sufficiently negative. This contradicts the boundedness of $v$. Thus, $\lim_{t\to-\infty}v'(t)=0$. Equation \eqref{1} yields
\begin{align*}
	\lim_{t\to-\infty}v''(t)=\frac{(N-2)^2}{4}v_{\infty}-v_{\infty}^{\frac{N+2}{N-2}}.
\end{align*} 
Since $\lim_{t\to-\infty}v'(t)=0$, the above limit must be zero, which implies $v_{\infty}=v_0$. It follows that $E=F(v_0)$, which is impossible. Hence, there exists $t_1<t_0$ such that $v'(t_1)=0$. Applying the same argument on $(-\infty,t_1]$, we can find $t_2<t_1$ such that $v'(t_2)=0$. Set $\widetilde v(t)=v(2t_1-t)$. Then $\widetilde v$ satisfies \eqref{1}. Moreover, $\widetilde v(t_1)=v(t_1)$ and $\widetilde v'(t_1)=-v'(t_1)=0$. By uniqueness, $\widetilde v\equiv v$. Now set $\overline v(t)=v\bigl(2t_1-2t_2+t\bigr)$. Using the above symmetry, we obtain $\overline v(t_2)=v(2t_1-t_2)=v(t_2)$ and $\overline v'(t_2)=v'(2t_1-t_2)=-v'(t_2)=0$. By uniqueness again, $\overline v\equiv v$. Therefore, $2(t_1-t_2)$ is a period of $v$. 

Finally, since $E>F(v_0)$, the solution cannot be the constant solution $v\equiv v_0$. Therefore, $v$ is a nonconstant periodic solution.
\end{proof}

\textbf{Proof of Theorem~\ref{thm2}.} By Lemma~\ref{lem1}, there exists a family of nonconstant positive periodic solutions of \eqref{1} satisfying $0<v(t)<v^{*}$ for all $t\in\RR$, where $v^{*}=(N(N-2)/4)^{(N-2)/4}$. Consequently, each such periodic solution generates a positive
radial singular solution of \eqref{critical} defined by $u_s(r)=(\sinh r)^{\frac{2-N}{2}}v(t)$, where $t=\ln\tanh\frac{r}{2}$. Since $\lim_{t\to0^{-}}v(t)=v(0)>0$, we have
\begin{align*}
	u_s(r)=(\sinh r)^{\frac{2-N}{2}}v(t)\sim 2^{\frac{N-2}{2}}v(0)e^{\frac{2-N}{2}r}\qquad\text{as }r\to+\infty.
	\end{align*}
Moreover, since $F(v_0)<E<0$, there exist $v_-$ and $v_{+}$ with $0<v_-<v_0<v_+<v^*$ such that $v(t)\in[v_-,v_+]$ for all $t\in\RR$. Since $v\in C^2(\RR)$ is nonconstant and periodic, it attains its minimum and maximum values. Thus, \eqref{47} holds. The proof of Theorem~\ref{thm2} is completed.\qed

\textbf{Proof of Theorem~\ref{thm3}.}
For $t\in(-\infty,0)$, set
\begin{align*}
	r(t)=\ln\frac{1+e^t}{1-e^t},
	\qquad
	v(t)=(\sinh r(t))^{(N-2)/2}u(r(t)).
\end{align*}
Then $v(t)>0$ for $t\in(-\infty,0)$ and satisfies \eqref{1}. Since $\mathcal{E}_{v}$ is constant along any solution of \eqref{1}, there exists $E\in\RR$ such that $\mathcal{E}_v(t)\equiv E$ for any $t\in(-\infty,0)$.

If $v\equiv v_0$, then $u=U_s$. In what follows, we assume that $v$ is nonconstant.

Suppose first that $v'$ has at most one zero. Then $v$ is monotone for $t<0$ sufficiently negative. We will show that this case cannot occur. Since $F(v)\le E$ and $F(v)\to+\infty$ as $v\to+\infty$, $v$ is bounded on $(-\infty,0)$. Consequently, there exists a constant $c\ge0$ such that $v(t)\to c$ as $t\to-\infty$. As in the proof of Lemma~\ref{lem1}, we obtain $v'(t)\to0$ as $t\to-\infty$. Equation \eqref{1} yields $v''(t)\to -F'(c)$ as $t\to-\infty$, which implies $c=v_0$ or $c=0$.

\textbf{Case 1.} $c=v_0$. Then $E=F(v_0)$. Since $F(v)\ge F(v_0)$ for $v>0$, we get $v\equiv v_0$, which is a contradiction.

\textbf{Case 2.} $c=0$. Then $E=0$. Moreover, $v>0$ on $(-\infty,0)$ and $v(t)\to0$ as $t\to-\infty$. Thus, for $t<0$ sufficiently negative, $v'(t)>0$ and
\begin{align*}
	v'=\sqrt{-2F(v)}=v\sqrt{\frac{(N-2)^2}{4}-\frac{N-2}{N}v^{\frac{4}{N-2}}}.
\end{align*}
This also yields 
\begin{align*}
	v'^2=\frac{(N-2)^2}{4}v^2-\frac{N-2}{N}v^{\frac{2N}{N-2}}.
\end{align*}
Hence, there exists $t_0\in\RR$ such that for all $t<0$ sufficiently negative, 
\begin{align*}
	v(t)=v^*\left(\cosh(t-t_0)\right)^{-\frac{N-2}{2}},
\end{align*}
where $v^{*}=(N(N-2)/4)^{(N-2)/4}$. Since the right-hand side defines a positive solution to \eqref{1} on $\RR$, by uniqueness $v$ can be extended to $\RR$. It follows that 
\begin{align}\label{eq:regular}
    u(r)=\frac{v^{*}(2\cosh t_0)^{-\frac{N-2}{2}}}{\Bigl(\bigl(\cosh\frac{r}{2}\bigr)^2-\frac{1}{1+e^{2t_0}}\Bigr)^{\frac{N-2}{2}}},\qquad r>0.
\end{align}
Thus, $\lim_{r\to0^+}u(r)=v^*e^{-\frac{N-2}{2}t_0}$, which contradicts the assumption that the singularity at $Q$ is nonremovable. In fact, the solution in \eqref{eq:regular} is precisely the $k=1$ profile classified in \cite[Theorem~1.1(2)]{LLW}.

Now we suppose that $v'$ has at least two zeros on $(-\infty,0)$. Let $t_1<t_2<0$ satisfy $v'(t_1)=v'(t_2)=0$. If $v(t_1)=v(t_2)$, by uniqueness, $v$ is periodic with period $t_2-t_1$. If $v(t_1)\neq v(t_2)$, $F(v)=E$ has two distinct positive roots. This implies $F(v_0)<E<0$. Hence, by Lemma~\ref{lem1}, $v$ is a
positive nonconstant periodic solution on $\RR$. 

Let $b=\inf_{\RR}v$. Then $b\in(0,v_0)$. Choose $t_0\in\RR$ such that $v(t_0)=b$, and so $v'(t_0)=0$. Let $v_b$ be the solution of \eqref{1} normalized by $v_b(0)=b$ and $v_b'(0)=0$. By the uniqueness of the initial-value problem, $v(t)=v_b(t-t_0)$ for all $t\in\RR$. The periodicity of $v_b$ implies that there exists
a unique $T\in[0,T_b)$ such that $v(t)=v_b(t+T)$ for any $t\in\RR$. By the definition of $v$, we obtain
\begin{align*}
u(r)
=
(\sinh r)^{-\frac{N-2}{2}}
v_b\left(\ln\tanh\frac r2+T\right),
\qquad r>0.
\end{align*}
The proof of Theorem~\ref{thm3} is completed.\qed

\section{Appendix}
In this section, we introduce some results for the corresponding equation in $\RR^{N}$ that are used in Section~2. A solution $\widetilde{w}_\alpha$ of \eqref{linear2} is called a slowly decaying solution if $\widetilde{w}_\alpha(\tau)>0$ on $(0,+\infty)$ and $\tau^{N-2}\widetilde{w}_\alpha(\tau)\to+\infty$ as $\tau\to+\infty$. Yanagida and Yotsutani \cite{YY} obtained the existence of slowly decaying solutions to \eqref{linear2} under the following conditions:
\begin{itemize}
	\item[($\mathrm{K_1}$)] $K\in C((0,+\infty))$, $K(\tau)\ge0$ and $K(\tau)\not\equiv0$ on $(0,+\infty)$, with $\tau K(\tau)\in L^1(0,1)$,
	 \item[($\mathrm{K_2}$)] $\tau^{N-1-(N-2)p}K\in L^1(1,+\infty)$.
\end{itemize} 
Let $G(\tau)$ and $H(\tau)$ be functions defined by
\begin{alignat*}{2}
	G(\tau)&=\frac{1}{p+1}\tau^{N}K(\tau)-\frac{N-2}{2}\int_{0}^{\tau}\rho^{N-1}K(\rho)\,d\rho,\\
	H(\tau)&=\frac{1}{p+1}\tau^{2-(N-2)p}K(\tau)-\frac{N-2}{2}\int_{\tau}^{+\infty}\rho^{1-(N-2)p}K(\rho)\,d\rho.
	\end{alignat*}
Define
\begin{alignat*}{2}
	\tau_{G}&=\inf\{\tau\in(0,+\infty):G(\tau)<0\},\\
	\tau_{H}&=\sup\{\tau\in(0,+\infty):H(\tau)<0\}.
	\end{alignat*}
	\begin{lem}\label{lem8}\cite[Theorem~2]{YY}
		Assume that $N>2$, $p>1$, and $K$ satisfies $(\mathrm{K_1})$--$(\mathrm{K_2})$. If $\liminf_{\tau\to+\infty}G(\tau)<0$, then there exists $\alpha_*>0$ such that $\widetilde{w}_\alpha$ is a slowly decaying solution of \eqref{linear2} for every $\alpha\in(0,\alpha_*)$.
	\end{lem}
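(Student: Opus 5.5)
We would follow the Pohozaev-identity strategy of Yanagida--Yotsutani, which we only sketch here since Lemma~\ref{lem8} is quoted from \cite[Theorem~2]{YY}.

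\textbf{Trichotomy.} Write the equation in \eqref{linear2} as $(\tau^{N-1}\widetilde w')'=-\tau^{N-1}K\widetilde w^p$. Then $\widetilde w_\alpha$ is strictly decreasing while positive. For each $\alpha>0$ exactly one of the following holds:
\begin{enumerate}[label=(\alph*)]
\item $\widetilde w_\alpha$ has a first zero $R_\alpha<\infty$;
\item $\widetilde w_\alpha>0$ and $\tau^{N-2}\widetilde w_\alpha(\tau)\to c\in(0,\infty)$ (rapid decay);
\item $\widetilde w_\alpha$ is slowly decaying.
\end{enumerate}
That these are the only options uses $(\mathrm{K_2})$: a positive solution with $\liminf \tau^{N-2}\widetilde w_\alpha<\infty$ has $\tau^{N-2}\widetilde w_\alpha$ convergent, because $\tau^{N-1}\widetilde w_\alpha'$ has a limit and $\widetilde w_\alpha\le C\tau^{2-N}$ makes the tail integrable.

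\textbf{Pohozaev identity.} Introduce
\begin{align*}
P(\tau)=\tau^N\Bigl(\frac{\widetilde w'^2}{2}+\frac{K\widetilde w^{p+1}}{p+1}\Bigr)+\frac{N-2}{2}\tau^{N-1}\widetilde w\widetilde w'.
\end{align*}
A direct differentiation gives $P'(\tau)=G'(\tau)\widetilde w^{p+1}(\tau)$. By $(\mathrm{K_1})$ and $\widetilde w_\alpha'(0)=0$, we have $P(0^+)=0$.
\begin{itemize}
\item In case (a), $P(R_\alpha)=\tfrac12R_\alpha^N\widetilde w'(R_\alpha)^2>0$.
\item In case (b), the boundary terms behave like $\tau^{2-N}$, and $(\mathrm{K_2})$ kills the $K$ term, so $P(\infty)=0$.
\end{itemize}
Hence in either case $\int_0^{R}G'\,\widetilde w_\alpha^{p+1}\,d\tau\ge0$, with $R=R_\alpha$ or $R=\infty$. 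Integrating by parts and using $\widetilde w_\alpha'<0$ gives
\begin{align*}
\int_0^{R}G'\widetilde w_\alpha^{p+1}\,d\tau=G(R)\widetilde w_\alpha^{p+1}(R)-(p+1)\int_0^{R}G\,\widetilde w_\alpha^{p}\widetilde w_\alpha'\,d\tau .
\end{align*}
The goal is to show this quantity is strictly negative for all small $\alpha$, which rules out (a) and (b).

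\textbf{Small-$\alpha$ analysis.} By $\liminf G<0$, fix $T$ with $G(T)<0$. Rescale $\widetilde w_\alpha=\alpha\, y_\alpha$. Then $y_\alpha''+\frac{N-1}{\tau}y_\alpha'+\alpha^{p-1}Ky_\alpha^p=0$ with $y_\alpha(0)=1$. Since $\tau K\in L^1(0,1)$ and $K$ is continuous, $y_\alpha\to1$ uniformly on $[0,T']$ for any fixed $T'$, and $\tau^{N-1}y_\alpha'\to0$. Therefore
\begin{align*}
\alpha^{-(p+1)}\int_0^{T'}G'\widetilde w_\alpha^{p+1}\,d\tau\to G(T')
\end{align*}
for each fixed $T'$. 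Choosing $T'$ with $G(T')<0$ gives a negative leading contribution, and it also shows $R_\alpha>T'$ for small $\alpha$.

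\textbf{Main obstacle: the tail $\int_{T'}^{R}G'\widetilde w_\alpha^{p+1}$.} Here $\widetilde w_\alpha$ is no longer close to $\alpha$, and $G$ may change sign. We would handle it with the Kelvin transform $\tau\mapsto\tau^{-1}$, $\widetilde w\mapsto\tau^{2-N}\widetilde w(1/\tau)$. This converts the behaviour at infinity into behaviour near $0$ with the weight appearing in $H$, and $(\mathrm{K_2})$ plays the role of $(\mathrm{K_1})$ there. The aim is the bound
\begin{align*}
\Bigl|\int_{T'}^{R}G'\widetilde w_\alpha^{p+1}\,d\tau\Bigr|\le \alpha^{p+1}\bigl(|G(T')|/2\bigr)
\end{align*}
for $T'$ large along a sequence where $G(T')\le \tfrac12\liminf G<0$ and $\alpha$ small. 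For this we would use $\widetilde w_\alpha\le\alpha$ together with the estimate $\tau^{N-1}|\widetilde w_\alpha'|\le\alpha^p\int_0^\tau\rho^{N-1}K$. The delicate point is making the tail error uniformly $o(\alpha^{p+1})$; this is where we would follow the comparison in \cite{YY} most closely. Once the sign contradiction is established for all $\alpha<\alpha_*$, only case (c) remains, which proves the lemma.
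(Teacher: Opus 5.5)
Your trichotomy, the sign of the Pohozaev quantity at a first zero or at infinity, and the head limit $\alpha^{-(p+1)}\int_0^{T'}G'\widetilde w_\alpha^{p+1}\to G(T')$ are all fine; the paper itself only cites \cite{YY} here. The genuine gap is the tail, and it cannot be closed as you plan, because the lemma as quoted, with only $\liminf G<0$, is false. The bound $\bigl|\int_{T'}^{R}G'\widetilde w_\alpha^{p+1}\,d\tau\bigr|\le\alpha^{p+1}|G(T')|/2$ treats as an error what is actually the main term. If $\int^\infty\tau K\,d\tau<\infty$, then $\widetilde w_\alpha\ge\alpha/2$ globally for small $\alpha$ and there is nothing to prove. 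Otherwise the region where $\widetilde w_\alpha$ really decays runs off to infinity as $\alpha\to0$, and the sign of $G$ there decides the outcome; negativity of $G$ at isolated points $T'$ says nothing about that region.

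Here is a counterexample. Take $\frac{N}{N-2}<p<\frac{N+2}{N-2}$ and $K$ smooth with $0\le K\le1$, $K\equiv1$ off $\bigcup_{k\ge2}(2^k-\frac12,2^k+\frac12)$, and $K(2^k)=0$. Then $(\mathrm{K_1})$--$(\mathrm{K_2})$ hold and $G(2^k)=-\frac{N-2}{2}\int_0^{2^k}\rho^{N-1}K\,d\rho<0$. Write $\widetilde w_\alpha(\tau)=\alpha Y_\alpha(\alpha^{(p-1)/2}\tau)$, where $Y_\alpha(0)=1$ and $Y_\alpha$ solves the same equation with weight $K(\alpha^{-(p-1)/2}t)$. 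This weight tends to $1$ in $L^1_{\mathrm{loc}}$, since the dips in $[0,M]$ have total length $O(\alpha^{(p-1)/2}\ln\frac1\alpha)$. Hence $Y_\alpha$ converges locally uniformly to the subcritical Lane--Emden solution, which has a simple zero, so $\widetilde w_\alpha$ is a crossing solution for every small $\alpha$. In this example $P(R_\alpha)>0$ while your head term is $\alpha^{p+1}(G(2^k)+o(1))<0$. So the tail exceeds $\alpha^{p+1}(|G(2^k)|-o(1))$, violating your target bound.

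What Lemma~\ref{lem7} actually verifies, through Lemma~\ref{lem9}(i), is $\limsup_{\tau\to+\infty}G(\tau)<0$. Under that hypothesis your Pohozaev set-up closes directly, with no Kelvin transform and no tail estimate. Avoid $G'$, which need not exist for merely continuous $K$, and use the identity, valid while $\widetilde w_\alpha>0$,
\begin{align*}
\frac{\tau^N(\widetilde w_\alpha')^2}{2}+\frac{N-2}{2}\tau^{N-1}\widetilde w_\alpha\widetilde w_\alpha'+\frac{N-2}{2}\widetilde w_\alpha^{p+1}\int_0^\tau\rho^{N-1}K\,d\rho=(p+1)\int_0^\tau G\,d\mu,\qquad d\mu=\widetilde w_\alpha^{p}|\widetilde w_\alpha'|\,d\rho .
\end{align*}
At a first zero the left side equals $\frac12R_\alpha^N(\widetilde w_\alpha'(R_\alpha))^2>0$. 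For a rapidly decaying solution it is $O(\tau^{2-N})$ by $(\mathrm{K_2})$. So in both cases $\int_0^RG\,d\mu\ge0$, where $\mu$ has total mass $\alpha^{p+1}/(p+1)$.

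Now pick $T$ and $c>0$ with $G\le-c$ on $[T,\infty)$. From $\tau^{N-1}|\widetilde w_\alpha'|\le\alpha^p\int_0^\tau\rho^{N-1}K$ and $(\mathrm{K_1})$ you get $\int_0^T|G|\,d\mu\le C_T\alpha^{2p}$ and $\widetilde w_\alpha(T)\ge\alpha/2$. On the other side, $\int_T^RG\,d\mu\le-\frac{c}{p+1}\widetilde w_\alpha(T)^{p+1}$. Since $2p>p+1$, this gives $\int_0^RG\,d\mu<0$ for all small $\alpha$, a contradiction. So only the slowly decaying alternative survives.
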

	\begin{lem}\label{lem9}\cite[Lemma~6]{YY}
		Let $N>2$ and $p>1$. Suppose that $K$ satisfies $(\mathrm{K_1})$--$(\mathrm{K_2})$ and
		\begin{align*}
			K(\tau)=A\tau^{l}+o(\tau^{l})\qquad\text{as }\tau\to+\infty
		\end{align*}  
		for some $A>0$ and $l<(N-2)p-N$.
		\begin{enumerate}[label=\textnormal{(\roman*)}]
			\item If $l<\frac{(N-2)p-(N+2)}{2}$, then $\limsup_{\tau\to+\infty}G(\tau)<0$ and $\tau_{H}<+\infty$.
			\item If $l>\frac{(N-2)p-(N+2)}{2}$, then $\liminf_{\tau\to+\infty}G(\tau)>0$ and $\tau_{H}=+\infty$.
		\end{enumerate}
	\end{lem}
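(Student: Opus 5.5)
The plan is a direct asymptotic computation of $G$ and $H$ from the hypothesis $K(\tau)=A\tau^{l}(1+o(1))$ with $A>0$. In each case the sign of the leading coefficient is decided by comparing $l$ with the threshold
\[
l^{*}:=\frac{(N-2)p-(N+2)}{2}.
\]
Two elementary facts about $l^{*}$ are used. First, since $p>1$ and $N>2$, we have $l^{*}>-2>-N$. Second, the standing assumption $l<(N-2)p-N$ gives $1-(N-2)p+l<-1$. The latter, together with $(\mathrm{K_2})$, makes the integral in $H$ convergent.

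\textbf{Step 1: asymptotics of $G$.} Assume first $N+l>0$. Then $\int_0^\tau\rho^{N-1}K(\rho)\,d\rho\sim\frac{A}{N+l}\tau^{N+l}$, by L'H\^opital's rule applied to the divergent integral. Hence
\[
G(\tau)=A\tau^{N+l}\Bigl[\frac{1}{p+1}-\frac{N-2}{2(N+l)}+o(1)\Bigr].
\]
The bracket is positive if and only if $2(N+l)>(N-2)(p+1)$, that is, if and only if $l>l^{*}$.
\begin{itemize}
\item In case (ii) we have $l>l^{*}>-N$, so the bracket is positive and $G(\tau)\to+\infty$. This gives $\liminf_{\tau\to+\infty}G(\tau)>0$.
\item In case (i) with $l>-N$, the bracket is negative and $G\to-\infty$.
\end{itemize}

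It remains to treat case (i) when $l\le-N$.
\begin{itemize}
\item If $l=-N$, the first term $\tau^{N}K/(p+1)$ stays bounded, while $\int_0^\tau\rho^{N-1}K\sim A\ln\tau\to\infty$. So $G\to-\infty$.
\item If $l<-N$, the first term tends to $0$. The integral increases to a finite positive limit, which is positive because $K\ge0$ and $K\not\equiv0$ by $(\mathrm{K_1})$. Hence $\limsup_{\tau\to+\infty}G(\tau)=-\frac{N-2}{2}\int_0^\infty\rho^{N-1}K\,d\rho<0$.
\end{itemize}
In every subcase of (i) this gives $\limsup_{\tau\to+\infty}G<0$.

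\textbf{Step 2: asymptotics of $H$.} Since $l<(N-2)p-2$, L'H\^opital's rule gives
\[
\int_\tau^\infty\rho^{1-(N-2)p}K(\rho)\,d\rho\sim\frac{A}{(N-2)p-2-l}\,\tau^{2-(N-2)p+l}.
\]
Therefore
\[
H(\tau)=A\tau^{2-(N-2)p+l}\Bigl[\frac1{p+1}-\frac{N-2}{2((N-2)p-2-l)}+o(1)\Bigr].
\]
The bracket is negative if and only if $2((N-2)p-2-l)<(N-2)(p+1)$, which simplifies exactly to $l>l^{*}$.
\begin{itemize}
\item In case (ii), $H(\tau)<0$ for all large $\tau$, so $\tau_H=+\infty$.
\item In case (i), $H(\tau)>0$ for all $\tau$ beyond some $\tau_1$. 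So $\{H<0\}\subset(0,\tau_1]$ and $\tau_H\le\tau_1<+\infty$. If that set is empty, $\tau_H$ is finite by the usual convention.
\end{itemize}

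\textbf{Main obstacle.} The only delicate point is the sign bookkeeping, since both coefficients flip at the same value $l^{*}$, together with the borderline subcases $l\le-N$ for $G$. Those subcases occur only in case (i), and they are settled by the positivity $K\ge0$, $K\not\equiv0$ from $(\mathrm{K_1})$ rather than by an asymptotic coefficient. The $o(\tau^{l})$ error in $K$ is absorbed by the L'H\^opital argument, because both integrals are compared with exact powers of $\tau$.
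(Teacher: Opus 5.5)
Your proof is correct. The paper gives no argument of its own for this lemma; it is simply imported from \cite[Lemma~6]{YY}. So the comparison is between citing and computing.

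Your route applies L'H\^opital's rule to $\int_0^\tau\rho^{N-1}K$ and to the tail $\int_\tau^\infty\rho^{1-(N-2)p}K$, then reads off the sign of the leading coefficient. This makes the appendix self-contained and confirms that the hypotheses, as restated in this paper, suffice:
\begin{itemize}
\item $(\mathrm{K_1})$ gives finiteness of the integral in $G$ near $0$.
\item The condition $l<(N-2)p-N$ gives both convergence of the tail integral in $H$ and positivity of $(N-2)p-2-l$.
\item The two brackets change sign at the same value $l^*=\frac{(N-2)p-(N+2)}{2}>-2>-N$. Hence the borderline subcases $l\le -N$ arise only in case (i), and you handle them correctly.
\end{itemize}

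Three minor remarks:
\begin{itemize}
\item $(\mathrm{K_2})$ is already implied by $K\sim A\tau^{l}$ with $l<(N-2)p-N$, so you need not invoke it separately in Step~2.
\item Positivity of $\int_0^\infty\rho^{N-1}K$ in the subcase $l<-N$ already follows from $A>0$.
\item The conclusion $\tau_H<+\infty$ in case (i) holds whichever convention ($0$ or $-\infty$) is adopted for $\sup\emptyset$.
\end{itemize}

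The citation, by contrast, buys only brevity. In the paper the lemma is used only through $\limsup_{\tau\to+\infty}G(\tau)<0$, feeding Lemma~\ref{lem8}. It is applied with $l=p(N-2)-(N+2)\in(-2,0)$, and with $l=-2$ when $p=\frac{N}{N-2}$. Both lie in case (i) with $-N<l<l^*$, so downstream only that part of your $G$-computation is actually needed.
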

	
Li \cite{L1} studied the asymptotic behavior at infinity of solutions to \eqref{linear2} and proved the following results. For a real-valued function $f$, set $f^{+}=\max\{f,0\}$ and $f^{-}=\max\{-f,0\}$.
\begin{lem}\label{lem11}\cite[Theorem~1]{L1}
		Let $N\ge3$, $p>1$ and let $\widetilde{w}$ be a positive radial solution of \eqref{linear2}. Assume that $K$ satisfies
		\begin{enumerate}[label=\textnormal{(\roman*)}]
		\item $K(\tau)\ge0$ in $\tau\ge0$, $\lim_{\tau\to+\infty}\tau^{-l}K(\tau)=k_\infty>0$ where $l\ge-2$, $K$ is differentiable near infinity and $\left[\frac{d}{d\tau}(\tau^{-l}K(\tau))\right]^{+}\in L^1$, if $0<\frac{2+l}{p-1}<\frac{N-2}{2}$, or
		\item  $K(\tau)\ge0$ in $\tau\ge0$, $\lim_{\tau\to+\infty}\tau^{-l}K(\tau)=k_\infty>0$ where $l\ge-2$, $K$ is differentiable near infinity and $\left[\frac{d}{d\tau}(\tau^{-l}K(\tau))\right]^{-}\in L^1$, if $\frac{N-2}{2}<\frac{2+l}{p-1}<N-2$,
		\end{enumerate}
		then
		\begin{align*}
		\lim_{\tau\to+\infty}\tau^{\frac{2+l}{p-1}}\widetilde{w}(\tau)=:\widetilde{w}_\infty=\begin{cases}
			\biggl[\frac{\frac{2+l}{p-1}\bigl(N-2-\frac{2+l}{p-1}\bigr)}{k_\infty}\biggr]^{\frac{1}{p-1}}\quad\text{or}\\
			0.
		\end{cases}
		\end{align*}
		Furthermore, if $\widetilde{w}_\infty=0$, then 
		\begin{align*}\lim_{\tau\to+\infty}\tau^{N-2}\widetilde{w}(\tau)\end{align*} exists and is finite and positive.
	\end{lem}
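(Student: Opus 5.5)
The plan is to use the Emden--Fowler change of variables, which turns the equation into an autonomous-looking ODE with an almost constant coefficient, and then to close the argument with a Lyapunov (energy) function. Set
\begin{align*}
\sigma=\frac{2+l}{p-1},\qquad t=\ln\tau,\qquad \widetilde w(\tau)=\tau^{-\sigma}y(t),\qquad k(t)=\tau^{-l}K(\tau).
\end{align*}
Then \eqref{linear2} becomes
\begin{align*}
y''+(N-2-2\sigma)y'-\sigma(N-2-\sigma)y+k(t)y^p=0,
\end{align*}
with $k(t)\to k_\infty>0$ as $t\to+\infty$. Since $\frac{dk}{dt}\,dt=\frac{d}{d\tau}(\tau^{-l}K)\,d\tau$, the hypothesis $[\frac{d}{d\tau}(\tau^{-l}K)]^{\pm}\in L^1$ becomes $(k')^{\pm}\in L^1(t_0,\infty)$. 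The damping coefficient $a:=N-2-2\sigma$ is positive in case (i) and negative in case (ii). The constant $\sigma(N-2-\sigma)$ is positive in both cases.

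\textbf{Step 1: a priori bounds.} Since $(\tau^{N-1}\widetilde w')'=-\tau^{N-1}K\widetilde w^p\le0$ and $\widetilde w'(0)=0$, the function $\widetilde w$ is decreasing. Hence, for $\tau$ large,
\begin{align*}
-\tau^{N-1}\widetilde w'(\tau)\ge \widetilde w(\tau)^p\int_{\tau/2}^{\tau}\rho^{N-1}K\,d\rho\ge c\,\tau^{N+l}\widetilde w(\tau)^p .
\end{align*}
Integrating this differential inequality from $\tau/2$ to $\tau$ gives $\widetilde w(\tau)^{p-1}\le C\tau^{-2-l}$, that is, $y$ is bounded. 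Feeding this bound back into $\tau^{N-1}|\widetilde w'|\le C+\int^\tau\rho^{N-1}K\widetilde w^p$ yields $|\widetilde w'|\le C\tau^{-\sigma-1}$, so $y'$ is bounded. Consequently $y''$ is bounded as well.

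\textbf{Step 2: energy and the limit of $y$.} Define
\begin{align*}
E(t)=\frac12y'^2-\frac{\sigma(N-2-\sigma)}2y^2+\frac{k(t)}{p+1}y^{p+1},\qquad E'=-a\,y'^2+\frac{k'(t)}{p+1}y^{p+1}.
\end{align*}
In case (i) we have $E'\le C(k')^+$, so $E(t)-\int^t C(k')^+$ is nonincreasing. In case (ii) we have $E'\ge -C(k')^-$, so $E(t)+\int^t C(k')^-$ is nondecreasing. In both cases $E$ is bounded by Step 1, hence $E$ converges. Integrating $E'$ then gives $|a|\int^\infty y'^2<\infty$. Because $y''$ is bounded, $y'$ is uniformly continuous, and therefore $y'\to0$.

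It follows that $-\frac{\sigma(N-2-\sigma)}{2}y^2+\frac{k_\infty}{p+1}y^{p+1}$ has a limit. This function of $y$ is not constant on any interval, so the same intermediate-value argument as in the proof of Lemma~\ref{lem10} shows that $y\to y_\infty$. Passing to the limit in the equation, $y''$ converges; since $y'\to0$, that limit must be $0$. Hence
\begin{align*}
y_\infty\in\Bigl\{0,\ \bigl[\sigma(N-2-\sigma)/k_\infty\bigr]^{1/(p-1)}\Bigr\},
\end{align*}
which is exactly the stated dichotomy.

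\textbf{Step 3: the case $\widetilde w_\infty=0$ (main obstacle).} I expect this step to be the hardest. The linearization of the $y$-equation at $0$ has exponents $\sigma$ and $-(N-2-\sigma)$, so $y\to0$ forces $y$ onto the stable mode $e^{-(N-2-\sigma)t}$, which corresponds to $\widetilde w\sim c\tau^{2-N}$. To make this rigorous, I would first bootstrap the decay. Since $k(t)y^{p-1}\to0$, a comparison or variation-of-constants argument for the linear equation with the small perturbation $k y^{p-1}$ gives $y=O(e^{-(N-2-\sigma-\varepsilon)t})$, i.e.\ $\widetilde w=O(\tau^{2-N+\varepsilon})$. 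With this bound, $\rho^{N-1}K\widetilde w^p\in L^1(1,\infty)$, because $l+(N-1)-p(N-2)+p\varepsilon<-1$; this inequality uses $\sigma<N-2$, equivalently $l<p(N-2)-N$.

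Next I use the identity
\begin{align*}
\tau^{N-1}\widetilde w'(\tau)=-\int_0^\tau\rho^{N-1}K\widetilde w^p\,d\rho\to -M,\qquad M\in(0,\infty).
\end{align*}
Integrating from $\tau$ to $\infty$ (recall $\widetilde w\to0$) gives $\tau^{N-2}\widetilde w(\tau)\to M/(N-2)$, which is finite and positive. If the bootstrap in the first part of this step proves delicate in case (ii), where $\sigma>\frac{N-2}2$, I would instead argue via the monotonicity of $\tau^{N-2}\widetilde w$. Indeed,
\begin{align*}
(\tau^{N-2}\widetilde w)'=\tau^{N-3}\bigl((N-2)\widetilde w+\tau\widetilde w'\bigr),
\end{align*}
and the bracket is positive because $\tau^{N-1}\widetilde w'$ is decreasing. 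This monotonicity gives positivity of the limit directly, and the decay estimate is then needed only to show that the limit is finite.
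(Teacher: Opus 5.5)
The paper gives no proof of this lemma: it is quoted from Li [L1, Theorem~1] and used as a black box in Lemma~\ref{lem7}. You replace the citation with a self-contained argument, and it is correct. The Emden--Fowler substitution gives exactly $y''+(N-2-2\sigma)y'-\sigma(N-2-\sigma)y+k(t)y^p=0$. Step~1 correctly uses the monotonicity of $\widetilde w$ together with $\sigma<N-2$ to bound $y$, $y'$ and $y''$. The Lyapunov function of Step~2 converges, and $y'\to0$ by uniform continuity. The level-set argument, combined with the limit of $y''$, then pins $y_\infty$ to the two equilibria.

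The citation buys brevity; your proof buys transparency about which hypotheses are actually used. For instance, since $k(t)\to k_\infty$, either one-sided condition already gives $k'\in L^1$. For $k$ of class $C^1$ near infinity, this follows from $\int_{t_0}^T(k')^{\mp}=\int_{t_0}^T(k')^{\pm}\mp(k(T)-k(t_0))$. So in your energy argument, pairing the sign condition with the sign of the damping $a$ is not essential. In the application, the paper in fact verifies $\bigl|\frac{d}{d\tau}(\tau^{-l_2}K)\bigr|\in L^1(1,+\infty)$.

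Your Step~3 bootstrap works identically in cases (i) and (ii), so the fallback is unnecessary. It would not dispense with the decay bound anyway, since monotonicity of $\tau^{N-2}\widetilde w$ only gives positivity of the limit, not finiteness. Concretely, set $b=\sigma(N-2-\sigma)$. For $t\ge T_\delta$ you have $y''+ay'-(b-\delta)y=(\delta-ky^{p-1})y\ge0$. Apply the maximum principle on $[T_\delta,S]$ against $y(T_\delta)e^{\lambda_-(t-T_\delta)}+\eta e^{\lambda_+(t-T_\delta)}$, where $\lambda_\pm$ are the roots of $\lambda^2+a\lambda-(b-\delta)=0$. Letting $S\to\infty$ and then $\eta\to0$ gives $y\le Ce^{\lambda_- t}$. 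Since $a^2+4b=(N-2)^2$, $\lambda_-\to-(N-2-\sigma)$ as $\delta\to0$, whatever the sign of $a$.
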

\begin{lem}\label{lem13}\cite[Theorem~2]{L1}
	Let $N\ge3$, $p>1$ and let $\widetilde{w}$ be a positive radial solution of \eqref{linear2}. Assume that $K$ satisfies $K(\tau)\ge0$ in $\tau\ge0$, $\lim_{\tau\to+\infty}\tau^{2}K(\tau)=k_\infty>0$, $K$ is differentiable near infinity and $\left[\frac{d}{d\tau}(\tau^{2}K(\tau))\right]^{+}\in L^1$. Then $\lim_{\tau\to+\infty}(\ln\tau)^{1/(p-1)}\widetilde{w}(\tau)$ always exists and 
	\begin{align*}
		\lim_{\tau\to+\infty}(\ln\tau)^{1/(p-1)}\widetilde{w}(\tau)=:\widetilde{w}_\infty=\begin{cases}
			\Bigl[\frac{N-2}{(p-1)k_\infty}\Bigr]^{\frac{1}{p-1}}\quad\text{or}\\
			0.
			\end{cases}
	\end{align*}
	Furthermore, if $\widetilde{w}_\infty=0$, then 
	\begin{align*}\lim_{\tau\to+\infty}\tau^{N-2}\widetilde{w}(\tau)\end{align*} exists and is finite and positive.
\end{lem}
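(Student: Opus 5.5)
We would follow the Emden--Fowler reduction used throughout Section~\ref{S2}. Set $t=\ln\tau$, $y(t)=\widetilde w(\tau)$ and $k(t)=\tau^2K(\tau)$. Then \eqref{linear2} becomes
\begin{equation*}
\bigl(e^{(N-2)t}y'\bigr)'=-e^{(N-2)t}k(t)\,y^p ,
\end{equation*}
with $k(t)\to k_\infty>0$ and $(k')^+\in L^1$ near $+\infty$ (note $d\tau/\tau=dt$). From $(\tau^{N-1}\widetilde w')'=-\tau^{N-1}K\widetilde w^p<0$ and $\tau^{N-1}\widetilde w'\to0$ at $0$, we get $\widetilde w'<0$. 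Hence $y$ decreases to some limit $y_\infty\ge0$, and $y'\to0$. Passing to the limit in $y''+(N-2)y'+ky^p=0$ gives $k_\infty y_\infty^p=0$, so $y\to0$.

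\emph{Dichotomy.} Let $J(t)=\int_{t_0}^t e^{(N-2)s}k\,y^p\,ds$. We distinguish two cases.

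\emph{Case $J(\infty)<\infty$.} Then $e^{(N-2)t}y'(t)\to -c$ for some $c\ge0$, and integrating once more gives $y(t)\sim \frac{c}{N-2}e^{-(N-2)t}$, i.e.\ $\tau^{N-2}\widetilde w\to \frac{c}{N-2}$. We must rule out $c=0$. If $c=0$, then $-y'(t)=e^{-(N-2)t}\int_t^\infty e^{(N-2)s}ky^p\,ds\le C y(t)^p$ by monotonicity of $y$. This forces $y^{1-p}$ to grow at most linearly, so $y\ge C t^{-1/(p-1)}$. Then $e^{(N-2)s}y^p$ is not integrable, contradicting $J(\infty)<\infty$. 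So $c>0$, and in particular $(\ln\tau)^{1/(p-1)}\widetilde w\to0$; this is the second alternative together with the "furthermore" clause.

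\emph{Case $J(\infty)=\infty$.} We aim to show
\begin{equation*}
\frac{-y'}{y^p}\longrightarrow\frac{k_\infty}{N-2}.
\end{equation*}
Integrating $(y^{1-p})'=(p-1)(-y')/y^p$ then gives $y^{1-p}\sim \frac{(p-1)k_\infty}{N-2}t$, which is exactly the stated value $\bigl[\frac{N-2}{(p-1)k_\infty}\bigr]^{1/(p-1)}$.

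Write $-y'=e^{-(N-2)t}(c_0+J(t))$ and apply L'H\^opital to $\frac{c_0+J(t)}{e^{(N-2)t}y^p}$. The derivative of the denominator is $e^{(N-2)t}y^p\bigl(N-2+p\,y'/y\bigr)$, so the limit follows once we know $y'/y\to0$. This is the main obstacle: a priori $y'/y$ could stay of order one, i.e.\ $y$ could keep switching to a fast exponential regime.

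We would handle it with a Lyapunov-type functional in the variable $\sigma=y^{1-p}$, or equivalently with the quantity $Q=-y'/y^p$, which satisfies the Riccati-type equation
\begin{equation*}
Q'=-(N-2)Q+k+pQ^2y^{p-1}.
\end{equation*}
Since $y^{p-1}\to0$, the equilibrium near $k/(N-2)$ is attracting. The danger is that $Q$ escapes to $+\infty$ through the quadratic term, which corresponds to the fast branch. To exclude escape when $J=\infty$, we would use a Pohozaev-type energy such as $\frac12 y'^2+\frac{k}{p+1}y^{p+1}$ multiplied by a suitable weight. Its derivative involves $k'y^{p+1}$, and the hypothesis $(k')^+\in L^1$ makes the energy essentially monotone. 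This should give either $Q$ bounded, whence $Q\to k_\infty/(N-2)$ and $y'/y=-Qy^{p-1}\to0$, or the integrable case above. We expect this monotonicity bookkeeping, in the spirit of Li's argument, to be the delicate step; the rest is routine asymptotics.
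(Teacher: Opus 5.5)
The paper does not prove this lemma; it quotes it from Li \cite[Theorem~2]{L1}, so your argument has to stand on its own. Most of it is fine: the reduction to $y''+(N-2)y'+k(t)y^p=0$, the monotonicity, $y\to0$, and the case $J(\infty)<\infty$. There is one sign slip there. For $c=0$ the identity reads $y'(t)=e^{-(N-2)t}\int_t^\infty e^{(N-2)s}k\,y^p\,ds\ge0$. In any case $c>0$ is immediate, because $e^{(N-2)t}y'$ is negative and nonincreasing.

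\textbf{The gap: the case $J(\infty)=\infty$.} The whole content of the lemma is the dichotomy $y'/y\to0$ versus $y'/y\to-(N-2)$. You leave it at ``a suitably weighted Pohozaev energy should give this,'' without specifying the weight. The unweighted energy $\frac12y'^2+\frac{k}{p+1}y^{p+1}$ has derivative $-(N-2)y'^2+\frac{k'}{p+1}y^{p+1}$, but it tends to $0$ in both regimes, so it cannot tell them apart. Nothing in the proposal rules out $y'/y$ lingering near $-(N-2)$ for long stretches and then leaving, or oscillating.

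\textbf{How to close it.} A barrier argument suffices. Put $P=-py'/y=pQy^{p-1}$ and $\epsilon=py^{p-1}$. Then
\begin{equation*}
P'=\tfrac1pP^2-(N-2)P+k\epsilon,\qquad \epsilon'=-\tfrac{p-1}{p}P\epsilon .
\end{equation*}
\begin{itemize}
\item Suppose $P(t_2)>p(N-2)$ for some $t_2$. Since $k\epsilon\ge0$, we get $P'\ge\frac1pP\,(P-p(N-2))$, which forces blow-up in finite time. That is impossible because $y>0$ on $[t_2,\infty)$. Hence $P\le p(N-2)$.
\item Since $k\epsilon\to0$, for large $t$ one has $P'<0$ on any compact subinterval of $(0,p(N-2))$. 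So if $P<p(N-2)-\delta$ at one sufficiently late time, $P$ never returns above that level, and it then drops below every $\delta'>0$; that is, $P\to0$.
\item In that case $Q'=-(N-2-P)Q+k$ gives $Q\to k_\infty/(N-2)$. Integrating $(y^{1-p})'=(p-1)Q$ then yields the stated constant directly, with no L'H\^opital step.
\item Otherwise $P\to p(N-2)$, so $y\le Ce^{-(N-2-\delta)t}$ and hence $J(\infty)<\infty$.
\end{itemize}
Thus $J(\infty)=\infty$ forces $P\to0$, which is exactly what your second case needs. In your own variables the barrier is $Q=\theta/(py^{p-1})$ with $\theta<N-2$.

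This argument uses only $k\ge0$ and $k\to k_\infty$, not the hypothesis $[\frac{d}{d\tau}(\tau^2K)]^+\in L^1$. Treating that hypothesis as the key ingredient steered you toward the harder route.
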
	
	
\subsection*{Acknowledgemnet} The research of the authors is partial supported by NSFC (12671132).


\begin{thebibliography}{99}
	
\bibitem{A} P. Aviles, Local behavior of solutions of some elliptic equations, {\it Comm. Math. Phys.} {\bf 108} (1987), 177--192.

\bibitem{BK} C. Bandle and Y. Kabeya, On the positive, ``radial'' solutions of a semilinear elliptic equation in $\HH^N$, {\it Adv. Nonlinear Anal.} {\bf 1} (2012), 1--25.

\bibitem{BK1} C. Bandle and Y. Kabeya, Erratum: On the positive, ``radial'' solutions of a semilinear elliptic equation in $\HH^N$ [Adv. Nonlinear Anal. 1 (2012), no. 1, 1--25], {\it Adv. Nonlinear Anal.} {\bf 2} (2013), 147--150.

\bibitem{BGGV} M. Bonforte, F. Gazzola, G. Grillo and J. L. V\'azquez, Classification of radial solutions to the Emden--Fowler equation on the hyperbolic space, {\it Calc. Var. Partial Differential Equations} {\bf 46} (2013), 375--401.

\bibitem{CGS} L. Caffarelli, B. Gidas and J. Spruck, Asymptotic symmetry and local behavior of semilinear elliptic equations with critical Sobolev growth, {\it Comm. Pure Appl. Math.} {\bf 42} (1989), 271--297.

\bibitem{CL} E. Coddington and N. Levinson, {\it Theory of ordinary differential equations}, McGraw-Hill Book Co., Inc., New York-Toronto-London, 1955.

\bibitem{DPGW} A. DelaTorre, M. del Pino, M. Gonz\'alez and J. Wei, Delaunay-type singular solutions for the fractional Yamabe problem,
{\it Math. Ann.} {\bf 369} (2017), 597--626.

\bibitem{FK} R. Frank and T. K\"onig,
Classification of positive singular solutions to a nonlinear biharmonic equation with critical exponent,
{\it Anal. PDE} {\bf 12} (2019), 1101--1113.

\bibitem{GS} B. Gidas and J. Spruck, Global and local behavior of positive solutions of nonlinear elliptic equations, {\it Comm. Pure Appl. Math.} {\bf 34} (1981), 525--598.

\bibitem{GHWW} Z. Guo, X. Huang, L. Wang and J. Wei,
On Delaunay solutions of a biharmonic elliptic equation with critical exponent,
{\it J. Anal. Math.} {\bf 140} (2020), 371--394.

\bibitem{H} S. Hasegawa, Singular solutions of semilinear elliptic equations with supercritical growth on Riemannian manifolds, {\it NoDEA Nonlinear Differential Equations Appl.} {\bf 31} (2024), Paper No. 39.

\bibitem{H1} H. He, Supercritical elliptic equation in hyperbolic space, {\it J. Partial Differ. Equ.} {\bf 28} (2015), 120--127.

\bibitem{HLW} C. Hsia, C. Lin and Z. Wang, Asymptotic symmetry and local behaviors of solutions to a class of
anisotropic elliptic equations, {\it Indiana Univ. Math. J.} {\bf 60} (2011), 1623--1654.

\bibitem{JX} T. Jin and J. Xiong, Asymptotic symmetry and local behavior of solutions of higher order conformally invariant equations with isolated singularities, {\it Ann. Inst. H. Poincar\'e{} C Anal. Non Lin\'eaire} {\bf 38} (2021), 1167--1216.

\bibitem{L1} Y. Li, Asymptotic behavior of positive solutions of equation $\Delta u+K(x)u^p=0$ in $\RR^n$, {\it J. Differential Equations} {\bf 95} (1992), 304--330.

\bibitem{LLW} J. Li, G. Lu and J. Wang, The method of moving spheres on the hyperbolic space and the classification of solutions and the prescribed $Q$-curvature problem, {\it Adv. Math.} {\bf 482} (2025), Paper No. 110606.

\bibitem{L4} P.-L. Lions, Isolated singularities in semilinear problems, {\it J. Differential Equations} {\bf 38} (1980), 441--450.

\bibitem{MS} G. Mancini and K. Sandeep, On a semilinear elliptic equation in $\HH^N$, {\it Ann. Sc. Norm. Super. Pisa Cl. Sci. (5)} {\bf 7} (2008), 635--671.

\bibitem{NS1} W.-M. Ni and J. Serrin, Nonexistence theorems for singular solutions of quasilinear partial differential equations, {\it Comm. Pure Appl. Math.} {\bf 39} (1986), 379--399.

\bibitem{SZ} J. Serrin and H. Zou, Classification of positive solutions of quasilinear elliptic equations, {\it Topol. Methods Nonlinear Anal.} {\bf 3} (1994), 1--25.

\bibitem{WCCK} Y. Wu, Z. Chen, J. Chern and Y. Kabeya, Existence and uniqueness of singular solutions for elliptic equation on the hyperbolic space, {\it Commun. Pure Appl. Anal.} {\bf 13} (2014), 949--960.

\bibitem{YY} E. Yanagida and S. Yotsutani, Existence of positive radial solutions to $\Delta u+K(|x|)u^p=0$ in $\RR^n$, {\it J. Differential Equations} {\bf 115} (1995), 477--502.



\end{thebibliography}
\end{document}